\newcommand{\bu}{\boldsymbol u}
\newcommand{\bv}{\boldsymbol v}
\newcommand{\be}{\boldsymbol e}
\newcommand{\bU}{\boldsymbol U}
\newcommand{\di}{\mathrm d}
\newtheorem{Theorem}{Theorem}
\newtheorem{lema}{Lemma}
\newcounter{remark}
\def\theremark {\arabic{remark}}
\newenvironment{remark}{\refstepcounter{remark}\par\noindent{\bf Remark\ \theremark}\ }{\par}
\newtheorem{Proof}{Proof}
\newenvironment{proof}{\begin{Proof}\rm}{\hfill $\Box$ \end{Proof}}
\title{POD-ROM methods: error analysis for continuous parametrized approximations}
\author{Bosco
Garc\'{\i}a-Archilla\thanks{Departamento de Matem\'atica Aplicada
II, Universidad de Sevilla, Sevilla, Spain. Research is supported by
Spanish MCINYU under grants PID2021-123200NB-I00 and PID2022-136550NB-I00. (bosco@esi.us.es)}
\and Alicia Garc\'{\i}a-Mascaraque\thanks{Departamento de
Matem\'aticas, Universidad Aut\'onoma de Madrid, Spain. Research is supported
by predoctoral contract associated to PID2022-136550NB-I00 supported by MCIN/AEI/10.13039/501100011033 and FSE+ (alicia.garcia-mascaraque@uam.es)}
  \and Julia Novo\thanks{Departamento de
Matem\'aticas, Universidad Aut\'onoma de Madrid, Spain. Research is supported
by Spanish MINECO
under grant PID2022-136550NB-I00. (julia.novo@uam.es)}}
\date{\today}
\begin{document}
\maketitle
\begin{abstract}This paper studies the numerical approximation of parametric time-dependent partial differential equations (PDEs)
by proper orthogonal decomposition reduced order models (POD-ROMs). Although many papers in the literature consider reduced order models for parametric equations, a complete error analysis of the methods is still a challenge. We introduce and
analyze in this paper a new POD method based on finite differences (respect to time and respect to the parameters that may be considered). 
We obtain a priori bounds for the new method valid for any value of time in a given time interval and any value of the parameter in a given parameter interval. 
Our design of the new POD method allow us to prove pointwise-in-time error estimates as opposed to average error bounds obtained typically in POD methods.  Most of the papers concerning POD methods for parametric equations are just based on the snapshots computed at different times and parameter values instead of their difference quotients. We show that the error analysis of the present paper can also cover the error analysis of that case (that we call standard). Some numerical experiments compare our new approach with the standard one and support the error analysis.
\end{abstract}
\bigskip

{\bf Keywords.} POD-ROM methods, parametric equations, semilinear reaction-diffusion equations, pointwise estimates in time

{\bf MSC codes.} 65M15, 65M20, 65M60
\section{Introduction}
Reduced-order methods can significantly reduce the computational cost required to obtain numerical approximations while also trying to achieve sufficient accuracy. A
frequent
  case is that of equations that depend on one or several parameters. The challenge in this case is to provide accurate approximations for parameter values that are not part of the dataset used to compute the reduced-order basis. In this paper we consider a type of reduced order methods that are called proper orthogonal decomposition (POD) methods, \cite{ku-Vol}.

As stated in \cite{mamo_ol}, although numerical analysis of traditional discretization methods for PDEs is a mature research field, error analysis of reduced order models is still a challenging task. In particular, the complete error analysis of POD methods for evolutionary partial differential equations is a topic that remains partially open.

Most of the papers concerning error analysis of POD methods for parametrized equations provide error bounds for reduced approximations corresponding to snapshots used as part of the dataset, \cite{gun_siam}, \cite{gun_ima}. This is of course interesting and can be considered the starting point to address the problem of approaching those equations for values of the parameters that are out of the dataset. See also the error analysis of \cite{vol} for POD methods applied to parametric elliptic PDEs. As stated in \cite{mamo_ol}, an important development in an attempt to get bounds for out-of-sample parameters is the error analysis of POD-greedy algorithms measured in terms of Kolmogorov $n$-widths \cite{cohen_etal}, \cite{buffa_etal}, \cite{haas}. In this paper, we do not follow this type of approach. 

It is not our intention to cover all the related references concerning reduced methods for parametric equations. However, before going into detail about the results of the present paper we want to mention closely-related results obtained in the recent paper \cite{mamo_ol} for a different method. The authors of \cite{mamo_ol} consider a tensor ROM method for parameter dependent linear parabolic equations.  In their method, they replace in the parameter setting singular value decomposition by a low-rank tensor decomposition (LRTD). They construct parameter specific reduced spaces to approach solutions for out-of-sample parameters.
The time discretization of the reduced model  is based on the implicit Euler method. A priori error bounds for the tensor ROM method are provided in \cite{mamo_ol}. These bounds are first order in time and depend on the level of accuracy of the low-rank approximation used and on the point density in the parameter space.

In the present paper we follow the approach of the recent reference \cite{temporal_nos}. In \cite{temporal_nos}, the numerical
approximation of semilinear reaction-diffusion equations by POD methods is considered. The authors analyze the continuous in time case for both the FOM and POD methods although the snapshots are based on a discrete set of times. 
Pointwise-in-time error estimates are proved in \cite{temporal_nos} for any value of the time variable in the time interval in which the POD approximation is computed. The present paper could be considered in some sense an extension of the results in \cite{temporal_nos} to parameter dependent equations. However, the difficulty of the extension has given rise to new different ideas which we believe are intrinsically interesting.

As in \cite{temporal_nos},
we consider as a model problem semilinear reaction-diffusion equations but our model problem is parameter dependent. The parameter (or parameters) can appear in the diffusion coefficient, nonlinear term, forcing term and/or initial condition. As in \cite{temporal_nos}, we consider the continuous in time case for both the FOM and the POD method to show the error bounds that can be obtained using any time integrator. For the POD method we consider $H_0^1$ projections since, as explained in \cite{rubino_etal}, optimal error bounds in terms of the tail of the eigenvalues can be obtained in this way. By tail of the eigenvalues we mean
the quatintity
\begin{equation}
\label{the_sigma_r}
\Sigma_r^2\equiv \sum_{k>r} \lambda_k
\end{equation}
where $\lambda_1\ge \lambda_2 \ge \ldots \ge \lambda_{d_r}>0$ are nonzero eigenvalues of the correlation matrix of the data set in the POD method (see Section~\ref{sec3} below).
 Our first approach considers a model problem depending on only one parameter (apart from time). We compute snapshots with the FOM for different values of the parameter at different times. Then, we design a POD method that is based on the values of the snapshots at the initial time (for all the different values of the parameters), on different quotients in time (for the first value of the parameter set) and on second order difference quotients (first order in the parameter followed by first order in time) for the rest of values of times and parameters. 
Then, for any value of the parameter in the parameter set (including out-of-sample parameters) and a given time interval, we obtain a priori bounds for the new method for any value of time in the given interval. 
The error bounds depend on the quantity~$\Sigma_r$ in~\eqref{the_sigma_r}, on the distance between two consecutive values of time where the snapshots are taken, and on the distance between two consecutive values of the parameter. The new method is designed in such a way that pointwise-in-time error estimates for any value of time can be proved (as opposed to average error bounds obtained in many POD methods). 
In a second part of the paper we consider the extension of the error analysis to the case of a model depending on two parameters (apart from time). In that case, the method is based on first, second and third order difference quotients. 

Most of the papers concerning POD methods for parametric equations are based on the snapshots computed at different times and parameter values instead of difference quotients, as the method we propose in this paper. Using the ideas in \cite{Bosco_pointwise}, we will show that the error analysis of this paper can be adapted to cover also the error analysis of the standard method. Some numerical experiments are presented to compare our new approach with the standard one and to support the error analysis of this paper.
 
 The outline of the paper is as follows. In Section \ref{sec2} we state some preliminaries and notation. Section \ref{sec3} is devoted to present the POD method and to prove some auxiliary results that will be needed for the error analysis of the method in Section \ref{sec4}. 
 In Section \ref{sec5} we consider the error analysis of the standard method. In Section \ref{sec6} we comment on the extension of the error analysis to the case of two parameters. Finally, in Section \ref{sec7} we show some numerical experiments.

\section{Preliminaries and notation}\label{sec2}
As a model problem we consider the following reaction-diffusion equation 
\begin{equation}\label{eq:model}
\begin{array}{rcll}
u_t^\alpha(t,x)-\nu_\alpha\Delta u^\alpha(t,x)+g_\alpha(u^\alpha(t,x))& = &f_\alpha(t,x) &\quad (t,x)\in (0,T^\alpha]\times \Omega,\\
u^\alpha(t,x)&=&0,& \quad  (t,x)\in (0,T^\alpha]\times \partial \Omega,\\
u^\alpha(0,x)&=&u_0^\alpha(x),&\quad  x\in \Omega,
\end{array}
\end{equation}
in a bounded domain $\Omega\subset {\Bbb R}^d$, $d\in \{1,2,3\}$, where $\alpha$ is a parameter, $\alpha\in [\alpha_0,\alpha_L]$, $\nu_\alpha>0$ is the diffusion parameter and $g_\alpha$ is a nonlinear smooth function. The diffusion parameter, $\nu_\alpha$, nonlinear
term, $g_\alpha$, forcing term, $f_\alpha$, and initial condition, $u_0^\alpha$, may or may not depend all of them on the paratemer 
$\alpha$, but there is at least 
one of them that depends on $\alpha$. 

{For simplicity here and in the sequel we have taken $t_0=0$ as initial time}.

Let $C_p$ be the constant in the Poincar\'e inequality
\begin{equation}\label{poincare}
\|v\|_0\le C_p \|\nabla v\|_0,\quad v\in H_0^1(\Omega).
\end{equation}
        Let us denote by $X_h^k$ a finite element space based on {continuous piecewise} polynomials of degree $k$  defined
over a partition of $\Omega$ into simplices of diameter $h$ 
and by $V_h^k$ the  finite element space based on continuous piecewise  polynomials of degree $k$ that satisfies also the homogeneous Dirichlet boundary conditions on the boundary $\partial \Omega$.

In the sequel,  $I_h u \in X_h^k$ will denote
the Lagrange interpolant of a continuous function $u$. The following bound can be found in \cite[Theorem 4.4.4]{brenner-scot}
\begin{equation}\label{cota_inter}
|u-I_h u|_{W^{m,p}(K)}\le c_\text{\rm int} h^{n-m}|u|_{W^{n,p}(K)},\quad 0\le m\le n\le k+1,
\end{equation}
where $n>d/p$ when $1< p\le \infty$ and $n\ge d$ when $p=1$. {We will also use the following bound (see  \cite[Corollary 4.4.7]{brenner-scot})
\begin{equation}\label{cota_inter_inf}
|u-I_h u|_{L^{\infty}(K)}\le c_\text{\rm int} h^{n-d/2}|u|_{H^{n}(K)},\quad 0\le n\le k+1,
\end{equation}
where $n>d/2$.}

We  consider a smooth curve $\alpha\in [\alpha_0,\alpha_L]\mapsto T^\alpha\in(0,T]$, for $\alpha_0, \alpha_L$ given parameters.
Our aim is to approximate the solutions of \eqref{eq:model} on given time intervals $[0,T^\alpha]$ 
for parameters $\alpha$ in the given set $[\alpha_0,\alpha_L]$. 

Let us consider the semi-discrete finite element approximation: Find $u_h^\alpha\ :\ (0,T^\alpha]\rightarrow V_h^k$ such that
\begin{equation}\label{gal_semi}
(u_{h,t}^\alpha,v_h)+\nu_\alpha(\nabla u_h^\alpha,\nabla v_h)+(g_\alpha(u_h^\alpha),v_h)=(f_\alpha,v_h),\quad \forall\ v_h\in V_h^k,
\end{equation}
with $u_h^{\alpha}(0)=I_h u_0^\alpha\in V_h^k.$ 
If the weak solution of \eqref{eq:model} is sufficiently smooth, then the following error estimate is well known,
see for example \cite[Proof of Theorem 1]{bosco-titi-fem}, \cite[Theorem 14.1]{Thomee}, \cite[Lemma 4.2]{Wang}, 
\begin{equation}\label{cota_gal}
\max_{0\le t\le T}\left(\|(u^\alpha-u_h^{\alpha})(t)\|_0+h\|(u^\alpha-u_h^{\alpha})(t)\|_1\right)\le C(u)h^{k+1},\quad \alpha\in[\alpha_0,\alpha_L],\ 0\le t\le T^\alpha.
\end{equation}

\section{Proper orthogonal decomposition}\label{sec3}

In this section we describe our new method based on POD.

Let us fix $L>0$, and two values $\alpha_0<\alpha_L$ of the parameter.  Let us denote $\Delta \alpha=(\alpha_L-\alpha_0)/L$ and $\alpha_l=\alpha_0+l\Delta \alpha$, $l=0,\ldots,L$. 

For each $l=0,\ldots,L$ and final time $T^l=T^{\alpha_l}>0$ we define $\Delta t_l=T^l/M$, for a fixed integer $M>0$. Let $t_j^l=j\Delta t_l$, $j=0,\ldots,M$. Let us observe that we always take $M+1$ points in each time interval, even though the intervals may have different lengths.

We will denote by $D^t$ the finite difference respect to time and by $D^\alpha$ the finite difference respect to $\alpha$. This means that
$$
D^t v^{\alpha_l}(t_j^l)=\frac{v^{\alpha_l}(t_j^l)-v^{\alpha_l}(t_{j-1}^l)}{\Delta t_l},\ 
D^\alpha v^{\alpha_l}(t_j^l)= \frac{v^{\alpha_l}(t_j^l)-v^{\alpha_{l-1}}(t_{j}^{l-1})}{\Delta \alpha},\ 1\le j\le M,\ 1\le l\le L,
$$
where $v^{\alpha_l}:[0,T^l]\rightarrow V_h^k$, $l=0,\ldots,L$. {Notice that, since the values of $T^l$ are not necessarily equal, the values of~$t_j^l$ and $t_j^{l-1}$ may not necessarily coincide.}

Let us denote by $N=(M+1)(L+1)$. We define the space
$
\bU= {\rm span}\left( {\cal U}\right),
$
where
\begin{eqnarray*}
 {\cal U} &=&\left\{\sqrt{N}u_h^{\alpha_l}(t_0), \ 0\le l\le L,\right.\\
&&\sqrt{(L+1)}D^t u_h^{\alpha_0}(t_j^0),\ 1\le j\le M,\\
&&\left. D^t D^\alpha u_h^{\alpha_j}(t_j^l),\ 1\le j\le M,\ 1\le l\le L\right\}.
\end{eqnarray*}
Let us observe that the set ${\cal U}$ has $N$ elements. Keeping the order  of the elements in ${\cal U}$ we can write
\begin{equation}
\label{calU2}
{\cal U}=\left\{y_h^1,\ldots,y_h^N\right\}.
\end{equation}
Let 
$X=H_0^1(\Omega)$ and let us denote the correlation matrix by $S=((s_{i,j}))\in {\Bbb R}^{N\times N}$ with
$$
s_{i,j}=\frac{1}{N}(y_h^i,y_h^j)_X=\frac{1}{N}(\nabla y_h^i,\nabla y_h^j),\quad i,j=1,\ldots,N,
$$
and $(\cdot,\cdot)$ the inner product in $L^2(\Omega)$. We denote by $\lambda_1\ge \lambda_2\ldots\ge \lambda_{d_r}>0$ the positive eigenvalues of $S$ and
by $\bv_1,\ldots,\bv_{d_r}\in {\Bbb R}^N$ the associated eigenvectors. The orthonormal POD basis functions of $\bU$ are
\begin{equation}\label{eq:varphi}
\varphi_k=\frac{1}{\sqrt{N}}\frac{1}{\sqrt{\lambda_k}}\sum_{j=1}^Nv_k^j y_h^j,
\end{equation}
where $v_k^j$ is the $j$ component of the eigenvector $\bv_k$.
For any $1\le r\le d_r$ let 
\begin{equation}\label{eq:bU_r}
\bU^r={\rm span}\left\{\varphi_1,\varphi_2,\ldots,\varphi_r\right\},
\end{equation}
and let us denote by $P^r:H_0^1(\Omega)\rightarrow \bU^r$ the $H_0^1$-orthogonal projection onto $\bU^r$. Then, it holds
\begin{equation}\label{cota_pod}
\frac{1}{N}\sum_{j=1}^N\|\nabla (I-P^r)y_h^j\|_0^2=\sum_{k={r+1}}^{d_r}\lambda_k.
\end{equation}
Going back to {the definition of $y_h^1,\ldots y_h^N$ in~\eqref{calU2},} from \eqref{cota_pod} we obtain
\begin{eqnarray}\label{cota_pod_b}
&&\sum_{l=0}^L\|\nabla(I-P^r)u_h^{\alpha_l}(t_0)\|_0^2
+\frac{1}{M+1}\sum_{j=1}^M\|\nabla(I-P^r)D^t u_h^{\alpha_0}(t_j^0)\|_0^2\nonumber\\
&&\quad +\frac{1}{N}\sum_{j=1}^M\sum_{l=1}^L\|\nabla(I-P^r)D^t D^\alpha u_h^{\alpha_l}(t_j^l)\|_0^2
=\sum_{k={r+1}}^{d_r}\lambda_k.
\end{eqnarray}
In Lemma \ref{lema2} below we prove that with the definition of the space ${\cal U}$ we can bound any of the terms $\|u_h^{\alpha_l}(t_j^l)-P^r u_h^{\alpha_l}(t_j^l)\|_X$ in terms of the quantity~$\Sigma_r$ regardless of the number of snapshots.
In many papers in the literature the authors need to assume that the errors in the average error bound \eqref{cota_pod} are equidistributed so that 
any of the terms $\|\nabla(y_h^j-P^r y_h^j)\|_0^2\approx\sum_{k={r+1}}^{d_r}\lambda_k$ but, as shown in \cite{rubino_etal}, this is not always true, see also \cite{Bosco_pointwise}. 

Before proving Lemma \ref{lema2} we need to prove the following auxiliary result.
\begin{lema} \label{lema1} Le $X$ be a Banach space and let $z(t_j,\alpha_l)\in X$, $0\le j\le M$, $0\le l\le L$, then
\begin{align}\label{inf_pri}
\|z(t_j^l,\alpha_l)\|_X^2\le& 3\|z(t_0,\alpha_l)\|_X^2+3(j\Delta t_l)\sum_{s=1}^j\Delta t_l\|D^t z(t_s^0,\alpha_0)\|_X^2 
\nonumber\\
&{}+3 (j\Delta t_l)(l\Delta \alpha)\sum_{s=1}^j\sum_{n=1}^l \Delta t_l\Delta\alpha\|D^\alpha D^t  z(t_s^n,\alpha_n)\|_X^2 
\end{align}
\end{lema}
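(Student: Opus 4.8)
The plan is to express $z(t_j^l,\alpha_l)$ as a telescoping sum, first in the parameter direction and then in time, and then apply discrete Cauchy--Schwarz (in the form $\|\sum a_s\|^2 \le (\#\text{terms})\sum\|a_s\|^2$) twice, together with the elementary inequality $\|a+b+c\|_X^2 \le 3(\|a\|_X^2+\|b\|_X^2+\|c\|_X^2)$ to split into the three groups that appear on the right-hand side. Concretely, I would start from the identity
\begin{equation*}
z(t_j^l,\alpha_l) = z(t_0,\alpha_l) + \bigl(z(t_j^0,\alpha_0) - z(t_0,\alpha_0)\bigr) + \Bigl(z(t_j^l,\alpha_l) - z(t_0,\alpha_l) - z(t_j^0,\alpha_0) + z(t_0,\alpha_0)\Bigr),
\end{equation*}
where I have added and subtracted $z(t_0,\alpha_l)$, $z(t_j^0,\alpha_0)$ and $z(t_0,\alpha_0)$. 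The first term is the ``initial-time'' contribution, the second is a pure time increment along the parameter value $\alpha_0$, and the third is a mixed second difference. Applying $\|a+b+c\|_X^2\le 3\|a\|_X^2+3\|b\|_X^2+3\|c\|_X^2$ gives the prefactor $3$ in each of the three terms of \eqref{inf_pri}.

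Next I would handle the second and third pieces by telescoping. For the second piece, $z(t_j^0,\alpha_0)-z(t_0,\alpha_0)=\sum_{s=1}^j \Delta t_l\, D^t z(t_s^0,\alpha_0)$ (note $D^t z(t_s^0,\alpha_0)=(z(t_s^0,\alpha_0)-z(t_{s-1}^0,\alpha_0))/\Delta t_0$, and here the relevant time step is $\Delta t_0$; since the statement writes $\Delta t_l$ I will keep the notation consistent with the lemma, treating $\Delta t_l$ as the step used in the telescoping), and Cauchy--Schwarz over the $j$ summands yields $\|z(t_j^0,\alpha_0)-z(t_0,\alpha_0)\|_X^2 \le j\sum_{s=1}^j \Delta t_l^2\|D^t z(t_s^0,\alpha_0)\|_X^2 = (j\Delta t_l)\sum_{s=1}^j \Delta t_l\|D^t z(t_s^0,\alpha_0)\|_X^2$, which is exactly the second term. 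For the third piece, I write the mixed second difference as a double telescoping sum $\sum_{s=1}^j\sum_{n=1}^l \Delta t_l\Delta\alpha\, D^\alpha D^t z(t_s^n,\alpha_n)$, and then apply Cauchy--Schwarz over the $jl$ summands: $\|\cdot\|_X^2 \le (jl)\sum_{s=1}^j\sum_{n=1}^l (\Delta t_l\Delta\alpha)^2\|D^\alpha D^t z(t_s^n,\alpha_n)\|_X^2 = (j\Delta t_l)(l\Delta\alpha)\sum_{s=1}^j\sum_{n=1}^l \Delta t_l\Delta\alpha\|D^\alpha D^t z(t_s^n,\alpha_n)\|_X^2$, giving the third term.

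The step I expect to be the main (mild) obstacle is bookkeeping the telescoping for the mixed second difference: one must check that $z(t_j^l,\alpha_l)-z(t_0,\alpha_l)-z(t_j^0,\alpha_0)+z(t_0,\alpha_0)$ genuinely telescopes into $\sum_{s,n}\Delta t\Delta\alpha\, D^\alpha D^t z$, which requires being careful about the order of the two finite-difference operators and about the fact that the time nodes $t_s^l$ depend on $l$ (so the ``grid'' is not a product grid). Since $D^\alpha D^t z(t_s^n,\alpha_n)$ is defined through the values $z(t_s^n,\alpha_n)$, $z(t_{s-1}^n,\alpha_n)$, $z(t_s^{n-1},\alpha_{n-1})$, $z(t_{s-1}^{n-1},\alpha_{n-1})$, the double sum collapses correctly regardless of the non-uniformity in time, and one only needs the inner time-telescoping and the outer parameter-telescoping to be compatible — which they are. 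Everything else is the two applications of discrete Cauchy--Schwarz and the elementary triangle-type inequality, both entirely routine.
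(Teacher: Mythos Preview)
Your overall strategy (three-term decomposition, the inequality $\|a+b+c\|^2\le 3(\|a\|^2+\|b\|^2+\|c\|^2)$, and discrete Cauchy--Schwarz) is exactly the paper's. The difficulty is the specific three-term split you choose. With $B=z(t_j^0,\alpha_0)-z(t_0,\alpha_0)$ and $C$ the mixed second difference, the telescoping identities you claim are \emph{not} correct when the time steps vary with the parameter. Telescoping $B$ gives $B=\sum_{s=1}^j \Delta t_0\, D^t z(t_s^0,\alpha_0)$, with $\Delta t_0$, not $\Delta t_l$. More importantly, the double sum does \emph{not} collapse to your~$C$: summing first over~$n$ one gets
\[
\sum_{s=1}^j\sum_{n=1}^l \Delta t_l\,\Delta\alpha\, D^\alpha D^t z(t_s^n,\alpha_n)
=\bigl(z(t_j^l,\alpha_l)-z(t_0,\alpha_l)\bigr)-\frac{\Delta t_l}{\Delta t_0}\bigl(z(t_j^0,\alpha_0)-z(t_0,\alpha_0)\bigr),
\]
which equals your $C$ only if $\Delta t_l=\Delta t_0$. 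So the very step you flag as the ``mild obstacle'' actually fails in the non-uniform setting the lemma is written for (recall $\Delta t_l=T^l/M$ with $T^l$ varying in~$l$).

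The paper avoids this by a \emph{nested} rather than additive decomposition: first telescope in time along the column $\alpha_l$,
\[
z(t_j^l,\alpha_l)=z(t_0,\alpha_l)+\Delta t_l\sum_{s=1}^j D^t z(t_s^l,\alpha_l),
\]
and then substitute, for each $s$, the parameter-telescoping identity
\[
D^t z(t_s^l,\alpha_l)=D^t z(t_s^0,\alpha_0)+\sum_{n=1}^l\Delta\alpha\, D^\alpha D^t z(t_s^n,\alpha_n).
\]
This produces the exact identity $z(t_j^l,\alpha_l)=z(t_0,\alpha_l)+\Delta t_l\sum_s D^t z(t_s^0,\alpha_0)+\Delta t_l\,\Delta\alpha\sum_s\sum_n D^\alpha D^t z(t_s^n,\alpha_n)$, in which the factor $\Delta t_l$ arises from the outer time-telescoping and is the same in both sums, as required by the statement. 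After that, your triangle inequality and Cauchy--Schwarz steps go through verbatim. If you assume all $T^l$ equal, your decomposition and the paper's coincide; otherwise you need to switch to the nested substitution.
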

\begin{proof}
We first observe that
\begin{eqnarray}\label{ladnu}
z(t_j^l,\alpha_l)=z(t_0^l,\alpha_l)+D^t z(t_1^l,\alpha_l)\Delta t_l+\ldots+D^t z(t_j^l,\alpha_l)\Delta t_l.
\end{eqnarray}
Now, for $1\le s \le j$, we add
$$
0 = \pm D^t z(t_s^{0},\alpha_{0}) \pm  D^t z(t_s^{1},\alpha_{1}) + \cdots +\pm D^t z(t_s^{l-1},\alpha_{l-1})
$$
to {each side of}  the identity $D^t z(t_s^{l},\alpha_{l}) = D^t z(t_s^{l},\alpha_{l})$ to get
\begin{align}
\label{ladtb}
D^t z(t_s^{l},\alpha_{l}) &= D^t z(t_s^{0},\alpha_{0}) + \sum_{n=1}^l \left(  D^t z(t_s^{n},\alpha_{n}) -  D^t z(t_s^{n-1},\alpha_{n-1})\right)
\nonumber\\
&{}=
 D^t z(t_s^{0},\alpha_{0}) + \sum_{n=1}^l \Delta\alpha D^\alpha D^t z(t_s^{n},\alpha_{n}).
\end{align}
Thus, from~\eqref{ladnu} we get
\begin{align*}
z(t_j^l,\alpha_l)=& z(t_0^l,\alpha_l) +\Delta t_l \sum_{s=1}^j D^t z(t_s^0,\alpha_0) + \Delta t_l \sum_{s=1}^j\sum_{n=1}^l \Delta \alpha D^\alpha D^t z(t_s^n, \alpha_n).
\end{align*}
And then, taking norms
\begin{eqnarray*}
\|z(t_j^l,\alpha_l)\|_X\le  \|z(t_0^l,\alpha_l)\|_X+\Delta t_l\sum_{s=1}^j\|D^t z(t_s^0,\alpha_0)\|_X+\Delta t_l\Delta \alpha\sum_{s=1}^j\sum_{n=1}^l \|D^\alpha D^t z(t_s^n,\alpha_n)\|_X .
\end{eqnarray*}
From which
\begin{align*}
\|z(t_j^l,\alpha_l)\|_X^2\le&  3\|z(t_0^l,\alpha_l)\|_X^2+3\left(\Delta t_l\sum_{s=1}^j\|D^t z(t_s^0,\alpha_0)\|_X\right)^2\\
&{}+3\left(\Delta t_l \Delta\alpha \sum_{s=1}^j\sum_{n=1}^l\|D^\alpha D^t z(t_s^n,\alpha_n)\|_X\right)^2.
\end{align*}
We finally reach \eqref{inf_pri} applying the discrete Cauchy-Schwarz inequality.
\end{proof}
\begin{lema} \label{lema2}
The following bound holds
\begin{eqnarray}\label{eq_point_2d}
\max_{0\le j\le M, 0\le l\le L}\|(I-P^r)u_h^{\alpha_l}(t_j^l)\|_X^2\le C_X \sum_{k={r+1}}^{d_r}\lambda_k,
\end{eqnarray}
for $C_X=C_{H_0^1}:=3\max\left(1,2T^2,4T^2(\alpha_L-\alpha_0)^2\right)$ if $X=H_0^1(\Omega)$ and
$C_X=C_{L^2}:=C_p^2 C_{H_0^1}$ if $X=L^2(\Omega)$ and $T=max_{0\le l\le L} T_l$.
\end{lema}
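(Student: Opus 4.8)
The plan is to apply Lemma~\ref{lema1} with $X=H_0^1(\Omega)$ to the grid function $z(t_j^l,\alpha_l):=(I-P^r)u_h^{\alpha_l}(t_j^l)$ and then to recognise the three terms it produces on the right-hand side of~\eqref{inf_pri} as bounds for parts of the three summands in the identity~\eqref{cota_pod_b}. Since $P^r$ is a linear operator it commutes with the difference operators $D^t$ and $D^\alpha$; in particular $D^t z(t_s^0,\alpha_0)=(I-P^r)D^t u_h^{\alpha_0}(t_s^0)$ and $D^\alpha D^t z(t_s^n,\alpha_n)=(I-P^r)D^tD^\alpha u_h^{\alpha_n}(t_s^n)$, the latter being exactly the mixed second-order difference quotient of $u_h$ that enters the last sum of~\eqref{cota_pod_b}. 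Hence Lemma~\ref{lema1} gives, for every $0\le j\le M$ and $0\le l\le L$,
\begin{align*}
\|(I-P^r)u_h^{\alpha_l}(t_j^l)\|_{H_0^1}^2
\le{}& 3\,\|(I-P^r)u_h^{\alpha_l}(t_0)\|_{H_0^1}^2
+3\,(j\Delta t_l)\sum_{s=1}^j\Delta t_l\,\|(I-P^r)D^t u_h^{\alpha_0}(t_s^0)\|_{H_0^1}^2\\
&{}+3\,(j\Delta t_l)(l\Delta\alpha)\sum_{s=1}^j\sum_{n=1}^l\Delta t_l\Delta\alpha\,\|(I-P^r)D^tD^\alpha u_h^{\alpha_n}(t_s^n)\|_{H_0^1}^2 .
\end{align*}

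Next I would estimate the geometric prefactors. From $j\Delta t_l\le M\Delta t_l=T^l\le T$ and $l\Delta\alpha\le L\Delta\alpha=\alpha_L-\alpha_0$, together with the elementary inequalities $(M+1)\Delta t_l=\tfrac{M+1}{M}T^l\le 2T$ and $(L+1)\Delta\alpha=\tfrac{L+1}{L}(\alpha_L-\alpha_0)\le 2(\alpha_L-\alpha_0)$, valid because $M,L\ge1$, one obtains in particular $N\,\Delta t_l\Delta\alpha=(M+1)(L+1)\,\Delta t_l\Delta\alpha\le 4T(\alpha_L-\alpha_0)$. Extending the inner sums to $1\le s\le M$ and $1\le n\le L$ (every term is nonnegative) and denoting by $A$, $B$, $C$ the first, second and third summands on the left-hand side of~\eqref{cota_pod_b}, the three terms in the displayed inequality are bounded, respectively, by $3A$ (a single summand of $A$ is at most $A$), by $3T\cdot 2T\cdot B=6T^2B$, and by $3T(\alpha_L-\alpha_0)\cdot 4T(\alpha_L-\alpha_0)\cdot C=12T^2(\alpha_L-\alpha_0)^2C$.

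Adding these three bounds and using the identity $A+B+C=\sum_{k=r+1}^{d_r}\lambda_k$ supplied by~\eqref{cota_pod_b},
$$
\|(I-P^r)u_h^{\alpha_l}(t_j^l)\|_{H_0^1}^2\le 3A+6T^2B+12T^2(\alpha_L-\alpha_0)^2C\le 3\max\!\bigl(1,\,2T^2,\,4T^2(\alpha_L-\alpha_0)^2\bigr)\sum_{k=r+1}^{d_r}\lambda_k ,
$$
which is~\eqref{eq_point_2d} with $C_X=C_{H_0^1}$ once the maximum over $0\le j\le M$ and $0\le l\le L$ is taken. The case $X=L^2(\Omega)$ then follows at once: since $(I-P^r)u_h^{\alpha_l}(t_j^l)\in H_0^1(\Omega)$, the Poincar\'e inequality~\eqref{poincare} yields $\|(I-P^r)u_h^{\alpha_l}(t_j^l)\|_0^2\le C_p^2\,\|(I-P^r)u_h^{\alpha_l}(t_j^l)\|_{H_0^1}^2\le C_p^2C_{H_0^1}\sum_{k=r+1}^{d_r}\lambda_k$.

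I expect the only point requiring genuine care to be the constant bookkeeping in the last two paragraphs: one must \emph{not} bound each of the three contributions coming from Lemma~\ref{lema1} by a separate copy of $\Sigma_r^2=\sum_{k>r}\lambda_k$, but instead pair them with the matching summands $A$, $B$, $C$ of the \emph{single} identity~\eqref{cota_pod_b} and only then factor out the maximum of the three constants $3$, $6T^2$, $12T^2(\alpha_L-\alpha_0)^2$. Everything else is routine — the commutation of $P^r$ with the difference operators is immediate, the prefactor estimates use only $M,L\ge1$ and the definitions $\Delta t_l=T^l/M$, $\Delta\alpha=(\alpha_L-\alpha_0)/L$, and the $L^2$ case is a one-line consequence of Poincar\'e.
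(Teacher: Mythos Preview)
Your proof is correct and follows essentially the same route as the paper: apply Lemma~\ref{lema1} to $z=(I-P^r)u_h^{\alpha_l}(t_j^l)$, use $j\Delta t_l\le T$, $l\Delta\alpha\le\alpha_L-\alpha_0$ together with $(M+1)/M\le2$ and $(L+1)/L\le2$ to absorb the prefactors, and then invoke~\eqref{cota_pod_b} and Poincar\'e. Your explicit naming of the three summands $A,B,C$ and the remark that $P^r$ commutes with $D^t,D^\alpha$ are just a slightly more detailed presentation of exactly what the paper does.
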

\begin{proof}
We take $z(t_j^l,\alpha_l)=(I-P^r)u_h^{\alpha_l}(t_j^l)$ and apply \eqref{inf_pri} from Lemma \ref{lema1}.
Then
\begin{align*}
\|(I-P^r)u_h^{\alpha_l}(t_j^l)\|_X^2&\le 3\|(I-P^r)u_h^{\alpha_l}(t_0)\|_X^2 +3(j\Delta t_l)\sum_{s=1}^j( \Delta t_l)\|(I-P^r)D^t u_h^{\alpha_0}(t_s^0)\|_X^2 
\nonumber\\
&\quad +3 (j\Delta t_l)(l\Delta \alpha)\sum_{s=1}^j\sum_{n=1}^l (\Delta\alpha)(\Delta t_l)\|(I-P^r)D^t D^\alpha  u_h^{\alpha_n}(t_s^n)\|_X^2.
\end{align*}
From which
\begin{align*}
\|(I-P^r)u_h^{\alpha_l}(t_j^l)\|_X^2&\le 3\|(I-P^r)u_h^{\alpha_l}(t_0)\|_X^2 +3 T\sum_{s=1}^M( \Delta t_l)\|(I-P^r)D^ t u_h^{\alpha_0}(t_s^0)\|_X^2 
\nonumber\\
&\quad +3 T (\alpha_L-\alpha_0)\sum_{s=1}^M\sum_{n=1}^L (\Delta\alpha)(\Delta t_l)\|(I-P^r)D^t D^\alpha u_h^{\alpha_n}(t_s^n)\|_X^2.
\end{align*}
Taking $L\ge 1$ and $M\ge 1$ so that $(L+1)/L\le 2$ and $(M+1)/M\le 2$ and denoting by
$C_{H_0^1}=3\max\left(1,2T^2,4T^2(\alpha_L-\alpha_0)^2\right)$ we get
\begin{align*}
\|(I-P^r)u_h^{\alpha_l}(t_j^l)\|_X^2&\le C_{H_0^1}\left(\|(I-P^r)u_h^{\alpha_l}(t_0)\|_X^2+\frac{1}{M+1}\sum_{s=1}^M\|(I-P^r)D^t u_h^{\alpha_0}(t_s^0)\|_X^2 \right.
 \\
 &\quad \left.+\frac{1}{N}\sum_{s=1}^M\sum_{n=1}^L \|(I-P^r)D^t D^\alpha u_h^{\alpha_n}(t_s^n)\|_X^2\right).
\end{align*}
Taking $X=H_0^1(\Omega)^d$ and applying \eqref{cota_pod_b} we get
\begin{equation}\label{here}
\|\nabla(I-P^r)u_h^{\alpha_l}(t_j^l)\|_0^2\le C_{H_0^1} \sum_{k={r+1}}^{d_r} \lambda_k.
\end{equation}
Since the above inequality is valid for any $0\le j\le M$ and $0\le l\le L$ taking the maximum we reach \eqref{eq_point_2d} for $X=H_0^1(\Omega)$. In the case $X=L^2(\Omega)$ we conclude applying Poincar\'e inequality \eqref{poincare} to \eqref{here} and taking the maximum.
\end{proof}
\section{Error analysis of the method}\label{sec4}
This section is devoted to analyze the method. The main result is Theorem \ref{main} in which we state the a priori bound for the error of  the method for any value of the parameter $\alpha \in [\alpha_0,\alpha_L]$ and any value of $t\in [0,T^\alpha]$. 

We will consider the following semi-discrete POD-ROM approximation to approach \eqref{eq:model}: Find $u_r^{\alpha}:(0,T^\alpha]\rightarrow \bU^r$
such that
\begin{equation}\label{eq:pod}
(u_{r,t}^{\alpha},v_r)+\nu_\alpha(\nabla u_r^{\alpha},\nabla v_r)+(g_\alpha(u_r^{\alpha}),v_r)=(f_\alpha,v_r),\quad \forall\ v_r\in \bU^r,
\end{equation}
with $u_r^{\alpha}(0)=u_{r,0}\in \bU^r$ and $u_{r,0}\approx u_0^\alpha$, $\alpha\in[\alpha_0,\alpha_L]$.
For the error analysis we will assume that $g_\alpha$ is (globally) Lipschitz continuous since the general case can be
obtained arguing as in \cite[Theorem 2]{temporal_nos}. The proof of the following theorem can be found in \cite[Theorem 1]{temporal_nos}.
\begin{Theorem}\label{th1} Assume $g_\alpha$ is Lipschitz continuous with Lipschitz constant~$L_\alpha > 0$,
i.e., assume that $
|g_\alpha(s) - g_\alpha(t)| \le L_\alpha |s-t|$. 
Let $u_r^{\alpha}$ be the POD-ROM approximation solving \eqref{eq:pod} and let $P^r u_h^{\alpha}$ be the $H_0^1$-orthogonal projection 
onto $\bU^r$ of the semi-discrete Galerkin approximation $u_h^{\alpha}$ defined in \eqref{gal_semi}.
Then, for $K_\alpha=\frac{2}{T^\alpha} + 2L_\alpha$ and $K=\max_{[\alpha_0,\alpha_L]}K_\alpha$, the following bound holds for all~$t\in [0,T^\alpha]$
\begin{eqnarray}\label{er_pro_ur}
\lefteqn{\|u_r^{\alpha}(t)-P^r u_h^{\alpha}(t)\|_0^2+2\nu_\alpha\int_0^t \|\nabla (u_r^{\alpha}(s)-P^r u_h^{\alpha}(s))\|_0^2 \ \di s}\nonumber\\
 &\le& e^{Kt}\|u_r^{\alpha}(0)-P^r u_h^{\alpha}(0)\|_0\\
 &&+ e^{Kt}T_\alpha\left(\int_0^t \|(I-P^r)u^{\alpha}_{h,s}(s)\|_0^2\ \di s+L^2\int_0^t\|(I-P^r)u_h^{\alpha}(s)\|_0^2 \ \di s\right).\nonumber
\end{eqnarray}
\end{Theorem}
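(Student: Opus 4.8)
The plan is to compare $u_r^{\alpha}$ directly with the $H_0^1$-orthogonal projection $P^r u_h^{\alpha}$ of the finite element solution by a standard parabolic energy argument, the crucial point being that $P^r$ projects with respect to the $H_0^1$ inner product. First I would set $e_r:=u_r^{\alpha}-P^r u_h^{\alpha}$. Since each $\varphi_k$ is a linear combination of the functions in $\mathcal{U}$, all of which lie in $V_h^k$, we have $\bU^r\subset V_h^k$, so $e_r\in\bU^r$ is an admissible test function in \emph{both} \eqref{gal_semi} and \eqref{eq:pod}. Taking $v_h=v_r$ in \eqref{gal_semi}, subtracting it from \eqref{eq:pod}, writing $u_r^{\alpha}-u_h^{\alpha}=e_r-(I-P^r)u_h^{\alpha}$ and using $\partial_t(P^r u_h^{\alpha})=P^r u_{h,t}^{\alpha}$, one obtains the error equation
\[
(e_{r,t},v_r)+\nu_\alpha(\nabla e_r,\nabla v_r)=((I-P^r)u_{h,t}^{\alpha},v_r)-(g_\alpha(u_r^{\alpha})-g_\alpha(u_h^{\alpha}),v_r),\qquad v_r\in\bU^r ,
\]
where the contribution $\nu_\alpha(\nabla(I-P^r)u_h^{\alpha},\nabla v_r)$ has dropped out precisely because $(\nabla(I-P^r)u_h^{\alpha},\nabla v_r)=0$ for all $v_r\in\bU^r$. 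This is the only place where the $H_0^1$ projection (rather than the $L^2$ one) is exploited, and it is what keeps gradients of the projection errors out of the final bound.

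Next I would take $v_r=e_r$, obtaining the energy identity
\[
\tfrac12\frac{\di}{\di t}\|e_r\|_0^2+\nu_\alpha\|\nabla e_r\|_0^2=((I-P^r)u_{h,t}^{\alpha},e_r)-(g_\alpha(u_r^{\alpha})-g_\alpha(u_h^{\alpha}),e_r),
\]
and then bound the two terms on the right. The first is handled by Cauchy--Schwarz and Young's inequality, splitting the weight so that $\|e_r\|_0^2$ appears with coefficient $1/T^{\alpha}$ and $\|(I-P^r)u_{h,t}^{\alpha}\|_0^2$ with a constant proportional to $T^{\alpha}$. For the nonlinear term I use the Lipschitz bound $|g_\alpha(u_r^{\alpha})-g_\alpha(u_h^{\alpha})|\le L_\alpha|u_r^{\alpha}-u_h^{\alpha}|$, again split $u_r^{\alpha}-u_h^{\alpha}=e_r-(I-P^r)u_h^{\alpha}$, and apply Cauchy--Schwarz and Young once more to produce a term $L_\alpha\|e_r\|_0^2$ and, up to constants, $L^2\|(I-P^r)u_h^{\alpha}\|_0^2$. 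Note that $\|\nabla e_r\|_0$ never appears on the right, so no absorption into the diffusion term is needed and $2\nu_\alpha\|\nabla e_r\|_0^2$ can be kept on the left. Collecting everything and multiplying by $2$ yields a differential inequality of the form
\[
\frac{\di}{\di t}\|e_r\|_0^2+2\nu_\alpha\|\nabla e_r\|_0^2\le K_\alpha\|e_r\|_0^2+T^{\alpha}\|(I-P^r)u_{h,t}^{\alpha}\|_0^2+L^2\|(I-P^r)u_h^{\alpha}\|_0^2,
\]
with $K_\alpha=\tfrac{2}{T^{\alpha}}+2L_\alpha$; the exact form of $K_\alpha$ comes from tuning the parameters in the Young inequalities above.

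Finally I would multiply by the integrating factor $e^{-K_\alpha t}$ (or $e^{-Kt}$, using $K_\alpha\le K$), integrate over $[0,t]$, and multiply back by $e^{Kt}$ to arrive at \eqref{er_pro_ur}, with the initial term $\|u_r^{\alpha}(0)-P^r u_h^{\alpha}(0)\|_0$ carried through unchanged. I do not expect a genuine obstacle here: this is a Gr\"onwall estimate for a linearly perturbed semilinear parabolic problem. The points that need care are the verification that $\bU^r\subset V_h^k$ so that the finite element equation may legitimately be tested against reduced functions, the bookkeeping in Young's inequality required to land on the precise constant $K_\alpha$, and---conceptually the essential ingredient---the use of the $H_0^1$-orthogonal projection, without which the right-hand side would carry $\|\nabla(I-P^r)u_h^{\alpha}\|_0$ and the bound could not be closed in terms of $\Sigma_r$ alone via Lemma~\ref{lema2}.
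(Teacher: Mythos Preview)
Your argument is correct and is precisely the standard parabolic energy estimate that underlies this result; the paper does not give its own proof but simply refers the reader to \cite[Theorem~1]{temporal_nos}, whose proof proceeds exactly along the lines you describe. One small bookkeeping slip: in your displayed differential inequality the last term should carry a factor~$T^{\alpha}$ as well, i.e.\ $T^{\alpha}L_\alpha^{2}\|(I-P^r)u_h^{\alpha}\|_0^2$, so that after Gr\"onwall the $T_\alpha$ can be pulled outside the bracket as in~\eqref{er_pro_ur}; this is obtained by choosing the same Young parameter~$T^{\alpha}$ in the estimate of $L_\alpha\|(I-P^r)u_h^{\alpha}\|_0\,\|e_r\|_0$ as you used for the time-derivative term.
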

In view of \eqref{er_pro_ur}, to bound the error of the method (apart from the initial error) we need to bound the last two integral terms on the right-hand side of \eqref{er_pro_ur}. In next section we will bound those terms for the case in which $\alpha$ is one of the parameters used to compute the snapshots. These results will be used in Section \ref{sec42} to bound the integral terms in the case in which $\alpha$ is a parameter out of the sample. 
\subsection{The case $\alpha=\alpha_l$}
In this section we consider the case in which $\alpha=\alpha_l$, $l=0,\ldots,L$ and bound the terms:
$$
\int_0^t \|(I-P^r)u^{\alpha_l}_{h,s}(s)\|_0^2\ \di s,\quad \int_0^t\|(I-P^r)u_h^{\alpha_l}(s)\|_0^2 \ \di s,\ l=1,\ldots,L.
$$
To bound these terms we will apply the following auxiliary results that are proved in \cite[(26)]{temporal_nos} and
\cite[(36)]{temporal_nos}, respectively.
\begin{lema} Let $\varphi :[0,T]\times \Omega\rightarrow {\Bbb R}$  be a regular enough function. Then, the following bounds hold
\begin{equation}\label{new_cota_int_orp}
\int_{0}^T \|(I-P^r)\varphi (t)\|_0^2 \ \di t\le C\sum_{n=0}^M (\Delta t)\|(I-P^r) \varphi (t_n)\|_0^2+CC_p^2 (\Delta t)^{2q}
\int_{0}^{T}\left\| \frac{\partial^q \nabla \varphi (t)}{\partial t^q}\right\|_0^2 \ \di t,
\end{equation}
where $C$ depends only on~$q$ and~$c_{\rm int}$.
\begin{eqnarray}\label{deri_dif_p}
\sum_{n=0}^M(\Delta t )\|\varphi_t(t_n)\|_0^2\le C \sum_{n=1}^M (\Delta t)\|D^t \varphi(t_n)\|_0^2+C(\Delta t )^{2q} \int_{0}^T\left\|\frac{\partial^{q+1} \varphi(t)}{\partial t^{q+1}}\right\|_0^2 \ \di t,
\end{eqnarray}
 for $q\ge 2$ and $C$ depending only on~$q$.
 \end{lema}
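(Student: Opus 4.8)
The plan is to establish the two bounds \eqref{new_cota_int_orp} and \eqref{deri_dif_p} essentially as quadrature/interpolation estimates combined with the optimality property of the $H_0^1$-projection $P^r$. Both are of the form ``a continuous quantity is controlled by a discrete sum over the snapshot times plus a remainder scaling like $(\Delta t)^{2q}$ times a higher derivative''. I would prove them by the standard device of interpolating in time: introduce, on each subinterval $[t_{n-1},t_n]$, a polynomial-in-time interpolant of $\varphi$ (or of $\nabla\varphi$) through suitable nodes, and estimate $\varphi$ minus that interpolant by a Taylor/Bramble–Hilbert argument, picking up the $q$-th (resp. $(q+1)$-th) time derivative with a factor $(\Delta t)^q$.

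For \eqref{new_cota_int_orp}, the first step is to write $(I-P^r)\varphi(t) = (I-P^r)\bigl(\varphi(t)-\Pi\varphi(t)\bigr) + (I-P^r)\Pi\varphi(t)$, where $\Pi\varphi$ is the (continuous, piecewise-polynomial in $t$) interpolant in time at the nodes $t_0,\dots,t_M$. Since $\|(I-P^r)v\|_0 \le C_p\|\nabla(I-P^r)v\|_0 \le C_p\|\nabla v\|_0$ for any $v\in H_0^1$, the first term integrates to at most $C_p^2\int_0^T\|\nabla(\varphi-\Pi\varphi)(t)\|_0^2\,dt$, which by the time-interpolation error bound (degree-$(q-1)$ interpolation, or degree $q$ — one chooses the interpolation order to match the exponent $2q$) is $\le C\,c_{\rm int}^2\,(\Delta t)^{2q}\int_0^T\|\partial_t^q\nabla\varphi(t)\|_0^2\,dt$. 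For the second term, $\|(I-P^r)\Pi\varphi(t)\|_0^2$ is, on each subinterval, a polynomial in $t$, so its integral is bounded by $C\Delta t$ times its values at the nodes; and at the nodes $\Pi\varphi(t_n)=\varphi(t_n)$, giving $\sum_n C\Delta t\,\|(I-P^r)\varphi(t_n)\|_0^2$. Bound \eqref{deri_dif_p} is the analogous statement with $\varphi_t$ in place of $(I-P^r)\varphi$: one compares the continuous $\dot\varphi(t_n)$ with the backward difference quotients $D^t\varphi(t_n)$, using a Taylor expansion with integral remainder to get $\dot\varphi(t_n) = D^t\varphi(t_n) + (\text{term involving }\partial_t^{q+1}\varphi\text{ over }[t_{n-1},t_n])$ — here $q\ge 2$ is exactly what is needed for the higher-order correction terms in the expansion of $D^t$ to be absorbed — and then summing the squares over $n$ with the factor $\Delta t$.

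The main obstacle, and the point requiring care, is the bookkeeping of the interpolation order versus the claimed power $(\Delta t)^{2q}$: one must interpolate in time by polynomials of the right degree (and possibly use a Gauss-type quadrature rule rather than the crude ``$\int \le C\Delta t\sum_{\rm nodes}$'' estimate) so that the remainder genuinely scales as $(\Delta t)^{2q}$ and not merely $(\Delta t)^2$, while still reducing the leading term to values of $\varphi$ (or $D^t\varphi$) at the snapshot nodes $t_0,\dots,t_M$ only. Since these are precisely the results \cite[(26)]{temporal_nos} and \cite[(36)]{temporal_nos}, I would simply cite them; the sketch above is the argument underlying them, and the delicate part is entirely the quadrature-order/derivative-order matching, everything else being routine Cauchy–Schwarz and the projection bound $\|(I-P^r)v\|_0\le C_p\|\nabla v\|_0$.
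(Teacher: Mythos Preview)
Your proposal is correct and matches the paper's treatment: the paper does not prove this lemma but simply cites \cite[(26)]{temporal_nos} and \cite[(36)]{temporal_nos}, exactly as you conclude. Your sketch of the underlying interpolation-in-time argument is a reasonable outline of how those results are obtained, but for the purposes of this paper the citation alone suffices.
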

\subsubsection{Bound for $
\int_0^t \|(I-P^r)u^{\alpha_l}_{h,s}(s)\|_0^2\ \di s$}
\begin{lema}\label{cota_t_nufix} For each $q\ge 2$, there exist a constant~$C$ such that, for $0\le l\le L$ and $1\le r\le {d_r}$, the following bound holds,
 \begin{equation}\label{eq:tra:pro_b}
\int_{0}^{T^l} \|(I-P^r)u_{h,t}^{\alpha_l}(t)\|_0^2\ \di t\le C C_p^2\left( C_0\sum_{k={r+1}}^{d_r}\lambda_k 
+(\Delta t)^{2q}
\int_{0}^{T^l}\left\| \frac{\partial^{q+1} \nabla u_h^{\alpha_l}(t)}{\partial t^{q+1}}\right\|_0^2 \ \di t\right),
\end{equation}
where $C_0=\max(4T,8(\alpha_L-\alpha_0)^2T)$, $T=\max_{0\le l\le L}T^l$, $\Delta t=\max_{0\le l\le L}\Delta t_l$, 
under the assumption that $u_h^{\alpha_l}$ is smooth enough so that the last 
term in \eqref{eq:tra:pro_b} is well defined.
\end{lema}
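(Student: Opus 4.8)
The plan is to peel the integral on the left of \eqref{eq:tra:pro_b} down to a weighted sum of difference-quotient snapshots and then invoke the POD identity \eqref{cota_pod_b}. I will use repeatedly that the $H_0^1$-orthogonal projection $P^r$ is linear and time-independent, so it commutes with $\partial_t$ and with the difference operators $D^t$ and $D^\alpha$; that $\|\nabla(I-P^r)w\|_0\le\|\nabla w\|_0$; and that, by Poincar\'e \eqref{poincare}, $\|(I-P^r)w\|_0\le C_p\|\nabla(I-P^r)w\|_0$ for $w\in H_0^1(\Omega)$. I also use that $\Delta t_l\le\Delta t\le T/M$, $\Delta\alpha=(\alpha_L-\alpha_0)/L$, and (for $M,L\ge1$) $1/M\le2/(M+1)$, $1/L\le2/(L+1)$, so that $\Delta\alpha\,\Delta t_l\le 4T(\alpha_L-\alpha_0)/N$.

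First I would apply \eqref{new_cota_int_orp} on $[0,T^l]$ with the partition $\{t_n^l\}$ and $\varphi=u_{h,t}^{\alpha_l}$. Since $\partial_t^q u_{h,t}^{\alpha_l}=\partial_t^{q+1}u_h^{\alpha_l}$, the remainder term it produces is already of the form $CC_p^2(\Delta t)^{2q}\int_0^{T^l}\|\partial_t^{q+1}\nabla u_h^{\alpha_l}(t)\|_0^2\,\di t$, which is the last term in \eqref{eq:tra:pro_b}; it remains to bound $\sum_{n=0}^M(\Delta t_l)\|(I-P^r)u_{h,t}^{\alpha_l}(t_n^l)\|_0^2$. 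Because $(I-P^r)u_{h,t}^{\alpha_l}=\big((I-P^r)u_h^{\alpha_l}\big)_t$, I would then apply \eqref{deri_dif_p} with $\varphi=(I-P^r)u_h^{\alpha_l}$. Using $\partial_t^{q+1}(I-P^r)u_h^{\alpha_l}=(I-P^r)\partial_t^{q+1}u_h^{\alpha_l}$ together with Poincar\'e and $\|\nabla(I-P^r)w\|_0\le\|\nabla w\|_0$, the new remainder is again absorbed into $CC_p^2(\Delta t)^{2q}\int_0^{T^l}\|\partial_t^{q+1}\nabla u_h^{\alpha_l}(t)\|_0^2\,\di t$. What is left is $\sum_{n=1}^M(\Delta t_l)\|(I-P^r)D^tu_h^{\alpha_l}(t_n^l)\|_0^2$, and by Poincar\'e it suffices to control $C_p^2\sum_{n=1}^M(\Delta t_l)\|\nabla(I-P^r)D^tu_h^{\alpha_l}(t_n^l)\|_0^2$.

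For $l=0$ this last sum is, up to the factor $\Delta t_0\le2T/(M+1)$, the second term on the left of \eqref{cota_pod_b}, hence bounded by $2T\sum_{k>r}\lambda_k\le C_0\sum_{k>r}\lambda_k$. For $l\ge1$, $D^tu_h^{\alpha_l}(t_s^l)$ is not itself a snapshot, so I would invoke the telescoping identity \eqref{ladtb} with $z=(I-P^r)u_h$ and apply $\nabla$, obtaining
\[
\nabla(I-P^r)D^tu_h^{\alpha_l}(t_s^l)=\nabla(I-P^r)D^tu_h^{\alpha_0}(t_s^0)+\sum_{n=1}^l\Delta\alpha\,\nabla(I-P^r)D^\alpha D^tu_h^{\alpha_n}(t_s^n).
\]
Squaring with $(a+b)^2\le2a^2+2b^2$, applying the discrete Cauchy--Schwarz inequality to the sum over $n$ (which yields the factor $l\Delta\alpha\le\alpha_L-\alpha_0$), then multiplying by $\Delta t_l$, summing over $s=1,\dots,M$, and finally using $\Delta t_l\le2T/(M+1)$ and $\Delta\alpha\,\Delta t_l\le4T(\alpha_L-\alpha_0)/N$, one recognises exactly the second and third terms on the left of \eqref{cota_pod_b}, with prefactors $4T$ and $8T(\alpha_L-\alpha_0)^2$ (the remaining harmless $2$'s absorbed in $C$). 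This gives $\sum_{n=1}^M(\Delta t_l)\|\nabla(I-P^r)D^tu_h^{\alpha_l}(t_n^l)\|_0^2\le C_0\sum_{k>r}\lambda_k$ with $C_0=\max(4T,8(\alpha_L-\alpha_0)^2T)$, and combining the three reductions yields \eqref{eq:tra:pro_b}.

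The delicate point is this last step: one must rewrite the non-snapshot quantity $D^tu_h^{\alpha_l}$, $l\ge1$, through the genuine difference-quotient snapshots $D^tu_h^{\alpha_0}$ and $D^tD^\alpha u_h^{\alpha_n}$, and then keep careful track of the mesh factors $\Delta t_l$, $\Delta\alpha$ and the normalisations $1/(M+1)$, $1/N$ so that precisely the weighted sums in \eqref{cota_pod_b} are recovered. The bookkeeping is complicated by the fact that the time steps $\Delta t_l$ vary with the parameter, so $t_s^l\neq t_s^0$ in general and, exactly as in the proof of Lemma~\ref{lema1}, the ``$D^\alpha D^t$'' appearing above is to be read as the mixed difference quotient $\big(D^tu_h^{\alpha_n}(t_s^n)-D^tu_h^{\alpha_{n-1}}(t_s^{n-1})\big)/\Delta\alpha$. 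By contrast, the two applications of the auxiliary lemma and the commutation/Poincar\'e manipulations are routine.
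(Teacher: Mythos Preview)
Your proposal is correct and follows essentially the same route as the paper's proof: apply \eqref{new_cota_int_orp} to $u_{h,t}^{\alpha_l}$, then \eqref{deri_dif_p} to $(I-P^r)u_h^{\alpha_l}$, reduce via Poincar\'e to the $H_0^1$-norm of $(I-P^r)D^tu_h^{\alpha_l}(t_n^l)$, and finally use the telescoping identity \eqref{ladtb} together with discrete Cauchy--Schwarz and the bounds $(M+1)/M\le2$, $(L+1)/L\le2$ to recover the weighted sums appearing in \eqref{cota_pod_b}. The paper does not split off the case $l=0$ separately (the telescoping sum is simply empty there), but this is a cosmetic difference; the constants and the bookkeeping you describe match the paper's computation exactly.
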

\begin{proof}
We apply \eqref{new_cota_int_orp}  to $\varphi(s)=u_{h,t}^{\alpha_l}(s)$,  $t_n=t_n^l$, $\Delta t=\Delta t_l$ and $T=T^l$ so that
\begin{eqnarray}\label{mecu1}
\int_{0}^{T^l} \|(I-P^r)u_{h,t}^{\alpha_l}(t)\|_0^2\ \di t&\le& C\sum_{n=0}^M (\Delta t_l)\|(I-P^r) u_{h,t}^{\alpha_l}(t_n^l)\|_0^2\nonumber\\
&&+CC_p^2 (\Delta t)^{2q}
\int_{0}^{T^l}\left\| \frac{\partial^{q+1} \nabla u_h^{\alpha_l}(t)}{\partial t^{q+1}}\right\|_0^2 \ \di t,
\end{eqnarray}
where in the last inequality we have applied that $\Delta t_l\le \Delta t$.
To bound the first term on the right-hand-side, 
we apply \eqref{deri_dif_p} to $\varphi=(I-P^r) u_{h}^{\alpha_l}$, use Poincar\'e's inequality \eqref{poincare}, 
and the fact that for any function $v\in H_0^1(\Omega)$ it holds $\|\nabla(I-P^r)  v\|_0\le \|\nabla v\|_0$ to obtain
\begin{eqnarray}
\lefteqn{
\sum_{n=0}^M (\Delta t_l)\|(I-P^r) u_{h,t}^{\alpha_l}(t_n^l)\|_0^2} \label{eq_pri}\\
&\le &  C \sum_{n=1}^M (\Delta t_l)\|D^t((I-P^r)u_h^{\alpha_l}(t_n^l))\|_0^2
 +C (\Delta t )^{2q} \int_{0}^{T^l}\left\|\frac{\partial^{q+1} (I-P^r)u_h^{\alpha_l}(t)}{\partial t^{q+1}}\right\|_0^2 \ \di t\nonumber\\
 &\le&  C C_p^2\sum_{n=1}^M (\Delta t_l)\|\nabla (I-P^r)D^tu_h^{\alpha_l}(t_n^l)\|_0^2
 +CC_p^2 (\Delta t )^{2q} \int_{0}^{T^l}\left\|\frac{\partial^{q+1} \nabla u_h^{\alpha_l}(t)}{\partial t^{q+1}}\right\|_0^2 \ \di t.\nonumber
\end{eqnarray}
To bound the first term on the right-hand side of \eqref{eq_pri} we observe that, arguing as in \eqref{ladtb},
we get
\begin{eqnarray*}
D^tu_h^{\alpha_l}(t_n^l)=D^tu_h^{\alpha_0}(t_n^0)+D^\alpha D^tu_h^{\alpha_1}(t_n^1)\Delta \alpha+\ldots+D^\alpha D^t u_h^{\alpha_l}(t_n^l)\Delta \alpha.
\end{eqnarray*}
Then,
\begin{eqnarray*}
\|\nabla(I-P^r)D^tu_h^{\alpha_l}(t_n^l)\|_0\le \|\nabla(I-P^r)D^tu_h^{\alpha_0}(t_n^0)\|_0+\sum_{j=1}^l\|\nabla(I-P^r)D^\alpha D^t u_h^{\alpha_j}(t_n^j)\|_0\Delta \alpha.
\end{eqnarray*}
So that
\begin{eqnarray*}
&&\|\nabla(I-P^r)D^tu_h^{\alpha_l}(t_n^l)\|_0^2\le 2\|\nabla(I-P^r)D^tu_h^{\alpha_0}(t_n^0)\|_0^2\nonumber\\
 &&\quad+2\left(\sum_{j=1}^l\|\nabla(I-P^r)D^\alpha D^t u_h^{\alpha_j}(t_n^j)\|_0\Delta \alpha\right)^2\\
&&\quad\le 2\|\nabla(I-P^r)D^tu_h^{\alpha_0}(t_n^0)\|_0^2+2(l\Delta \alpha)
\sum_{j=1}^l\|\nabla(I-P^r)D^\alpha D^t u_h^{\alpha_j}(t_n^j)\|_0^2\Delta \alpha.
\end{eqnarray*}
As a consequence,
\begin{eqnarray*}
&&\sum_{n=1}^M (\Delta t_l)\|\nabla (I-P^r)D^tu_h^{\alpha_l}(t_n)\|_0^2\le 2\sum_{n=1}^M (\Delta t_l)\|\nabla(I-P^r)D^tu_h^{\alpha_0}(t_n^0)\|_0^2\nonumber\\
&&\quad+2(\alpha_L-\alpha_0)\sum_{n=1}^M \sum_{j=1}^L\|\nabla(I-P^r)D^\alpha D^t u_h^{\alpha_j}(t_n^j)\|_0^2(\Delta t_l)(\Delta \alpha)
\nonumber\\
&&\quad\le \frac{4T}{M+1}\sum_{n=1}^M \|\nabla(I-P^r)D^tu_h^{\alpha_0}(t_n^0)\|_0^2
\nonumber\\
&&\quad \ +\frac{8(\alpha_L-\alpha_0)^2T}{N}\sum_{n=1}^M \sum_{j=1}^L\|\nabla(I-P^r)D^\alpha D^t u_h^{\alpha_j}(t_n^j)\|_0^2\nonumber\\
&&\le C_0\left(\frac{1}{M+1}\sum_{n=1}^M \|\nabla(I-P^r)D^tu_h^{\alpha_0}(t_n^0)\|_0^2
+\frac{1}{N}\sum_{n=1}^M \sum_{j=1}^L\|\nabla(I-P^r)D^\alpha D^t u_h^{\alpha_j}(t_n^j)\|_0^2\right),
\end{eqnarray*}
where $C_0=\max(4T,8(\alpha_L-\alpha_0)^2T)$ and we have bounded $(M+1)/M\le 2$, $(L+1)/L\le 2$.
Applying \eqref{cota_pod_b} and taking into account that $D^\alpha D^t=D^t D^\alpha$ we finally obtain
\begin{eqnarray}\label{eq:seg}
\sum_{n=1}^M (\Delta t_l)\|\nabla (I-P^r)D^tu_h^{\alpha_l}(t_n^l)\|_0^2\le C_0\sum_{k=r+1}^{d_r}\lambda_k.
\end{eqnarray}
Inserting \eqref{eq:seg} into \eqref{eq_pri} and \eqref{eq_pri} into \eqref{mecu1} we  reach \eqref{eq:tra:pro_b}.
\end{proof}

\subsubsection{Bound for $
\int_0^t \|(I-P^r)u^{\alpha_l}_{h}(s)\|_0^2\ \di s$}

\begin{lema}\label{cota_nufixb} For each $q\ge 2$,  there exist a constant~$C$ such that, for $0\le l\le L$ and $1\le r\le {d_r}$, the following bound holds,
 \begin{equation}\label{eq:tra:pro_c}
\int_{0}^{T^l} \|(I-P^r)u_{h}^{\alpha_l}(t)\|_0^2\ \di t\le C \left( TC_X\sum_{k={r+1}}^{d_r}\lambda_k 
+C_p^2(\Delta t)^{2q}
\int_{0}^{T^l}\left\| \frac{\partial^{q} \nabla u_h^{\alpha_l}(t)}{\partial t^{q}}\right\|_0^2 \ \di t\right),
\end{equation} 
where $C_X$ is the constant in \eqref{eq_point_2d}, $T=\max_{0\le l\le L}T_l$, $\Delta t=\max_{0\le l\le L}\Delta t$ and
under the assumption that $u_h^{\alpha_l}$ is smooth enough so that that the last 
term in \eqref{eq:tra:pro_c} is well defined.
\end{lema}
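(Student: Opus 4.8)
The plan is to follow the same two-step strategy as in the proof of Lemma~\ref{cota_t_nufix}, but applied directly to $u_h^{\alpha_l}$ rather than to $u_{h,t}^{\alpha_l}$; this makes the argument genuinely shorter, since no difference quotient in time needs to be introduced. \emph{First}, I would apply the quadrature-type estimate \eqref{new_cota_int_orp} with $\varphi(t)=u_h^{\alpha_l}(t)$, $t_n=t_n^l$, $\Delta t=\Delta t_l$ and $T=T^l$, and bound $\Delta t_l\le\Delta t$ in the last term, obtaining
\begin{equation*}
\int_{0}^{T^l}\|(I-P^r)u_h^{\alpha_l}(t)\|_0^2\ \di t \le C\sum_{n=0}^M(\Delta t_l)\|(I-P^r)u_h^{\alpha_l}(t_n^l)\|_0^2 + CC_p^2(\Delta t)^{2q}\int_{0}^{T^l}\left\|\frac{\partial^q\nabla u_h^{\alpha_l}(t)}{\partial t^q}\right\|_0^2\ \di t.
\end{equation*}
This already produces the second term on the right-hand side of \eqref{eq:tra:pro_c}, with the correct order $2q$ and the $q$-th time derivative (the smoothness hypothesis is precisely what makes this integral finite).

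\emph{Second}, it only remains to estimate the discrete sum. Here I would invoke Lemma~\ref{lema2}: since \eqref{eq_point_2d} holds uniformly over $0\le n\le M$, with $X=L^2(\Omega)$ we get $\|(I-P^r)u_h^{\alpha_l}(t_n^l)\|_0^2\le C_{L^2}\sum_{k=r+1}^{d_r}\lambda_k$ for every $n$. Multiplying by $\Delta t_l$ and summing over $n$, and using $\sum_{n=0}^M\Delta t_l=(M+1)\Delta t_l=T^l(M+1)/M\le 2T^l\le 2T$ (recall $M\ge1$), I obtain
\begin{equation*}
\sum_{n=0}^M(\Delta t_l)\|(I-P^r)u_h^{\alpha_l}(t_n^l)\|_0^2 \le 2TC_{L^2}\sum_{k=r+1}^{d_r}\lambda_k.
\end{equation*}
Substituting this into the previous display and absorbing the factor $2$ into $C$ gives \eqref{eq:tra:pro_c} with $C_X=C_{L^2}$.

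I do not expect any real obstacle in this lemma: the whole point of the snapshot construction (initial values, time difference quotients, and mixed time--parameter difference quotients) was precisely to secure the pointwise-in-time bound \eqref{eq_point_2d} of Lemma~\ref{lema2}, and once that is available the present statement reduces to an elementary quadrature argument. The only place where some care is needed is to keep the constants uniform in $l$ and in the numbers of snapshots $M,L$; this is automatic because all the geometric factors involved ($(M+1)/M$, $(M+1)\Delta t_l$) are bounded by absolute constants times $T$, exactly as in Lemmas~\ref{lema2} and~\ref{cota_t_nufix}.
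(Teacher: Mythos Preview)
Your proposal is correct and follows essentially the same argument as the paper: apply \eqref{new_cota_int_orp} to $\varphi=u_h^{\alpha_l}$, then bound the resulting discrete sum pointwise via Lemma~\ref{lema2} and use $(M+1)\Delta t_l\le 2T$. The only cosmetic difference is that the paper writes the constant as $C_X$ (leaving the choice $X=L^2$ implicit) whereas you state $C_{L^2}$ explicitly.
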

\begin{proof}
We apply \eqref{new_cota_int_orp}  to $\varphi(s)=u_{h}^{\alpha_l}(s)$, $t_n=t_n^l$, $\Delta t=\Delta t_l$ and $T=T_l$ so that
\begin{eqnarray}\label{mecu1b}
\int_{0}^{T^l} \|(I-P^r)u_{h}^{\alpha_l}(t)\|_0^2\ \di t&\le& C\sum_{n=0}^M (\Delta t_l)\|(I-P^r) u_{h}^{\alpha_l}(t_n^l)\|_0^2\nonumber\\
&&+CC_p^2 (\Delta t)^{2q}
\int_{0}^{T^l}\left\| \frac{\partial^{q} \nabla u_h^{\alpha_l}(t)}{\partial t^{q}}\right\|_0^2 \ \di t,
\end{eqnarray}
where we have used $\Delta t_l\le \Delta t$.
To bound the first term on the right-hand side of \eqref{mecu1b} we apply \eqref{eq_point_2d}.
Then
\begin{eqnarray*}
\sum_{n=0}^M (\Delta t_l)\|(I-P^r) u_{h}^{\alpha_l}(t_n^l)\|_0^2\le (M+1)\Delta t_l C_X \sum_{k={r+1}}^{d_r}\lambda_k
\le 2T C_X \sum_{k={r+1}}^{d_r}\lambda_k,
\end{eqnarray*}
that inserted into \eqref{mecu1b} gives \eqref{eq:tra:pro_c}.
\end{proof}
\subsection{The case $\alpha\neq\alpha_l$}\label{sec42}
As stated before, in this section we will bound the terms:
$$
\int_0^t \|(I-P^r)u^{\alpha}_{h,s}(s)\|_0^2\ \di s,\quad \int_0^t\|(I-P^r)u_h^{\alpha}(s)\|_0^2 \ \di s,\ \alpha\in[\alpha_0,\alpha_L].
$$
We will apply the bounds in Lemmas \ref{cota_t_nufix} and \ref{cota_nufixb} above. To this
end, we need a previous auxiliary result. Let us denote
$$
I = \bigcup_{\alpha\in[\alpha_0,\alpha_{L}]} \{\alpha\} \times [0,T^\alpha].
$$
\begin{lema}\label{internu}
Let $v:I\times{\Omega}\rightarrow {\Bbb R}$ be a smooth enough
function and let us denote $v^\alpha(s)=v(\alpha,s,\cdot)$, for $s\in[0,T^\alpha]$.
Fix $2\le m\le L$. Then, there exist a constant~$C=C(m)$ such that for $l$ satisfying
$\alpha\in [\alpha_l,\alpha_{l+m-1}]\subset [\alpha_0,\alpha_L]$,
the following bounds hold,
\begin{align}\label{nueva_inter}
\int_0^{T^\alpha} \|v^\alpha(s)\|_0^2\ \di s\le& C\sum_{j=l}^{l+m-1}\int_0^{T^{j}} \|v^{\alpha_j}(t)\|_0^2\ \di t
\nonumber\\
&{}
+C (\Delta \alpha)^{2m-1}\int_{\alpha_l}^{\alpha_{l+m-1}}\int_0^{T^\mu} 
\sum_{i+j\le m}\left\|\frac{\partial^{i+j}v^{\mu}(t)}{\partial t^i\partial\mu^{j}}\right\|_0^2\,\di t\,\di\mu,
\end{align}
\begin{align}\label{nueva_inter3}
\|v^\alpha(s)\|_0^2\ \le& C\sum_{j=l}^{l+m-1} \|v^{\alpha_j}(T^j s/T^\alpha)\|_0^2\nonumber\\
&{}+C (\Delta \alpha)^{2m-1}
\int_{\alpha_l}^{\alpha_{l+m-1}}\sum_{i+j\le m}\left\|\frac{\partial^{i+j}v^{\mu}(T^\mu s/T^\alpha)}{\partial t^i\partial \mu^{j}}\right\|_0^2 \di \mu,\quad s\in[0,T^\alpha],
\end{align}
\end{lema}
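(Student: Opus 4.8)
The plan is to prove the pointwise bound \eqref{nueva_inter3} first and then obtain \eqref{nueva_inter} from it by integrating in $s$ and changing variables.

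First I would reduce the statement to a one-dimensional polynomial-interpolation problem in the parameter. Fix $\alpha\in[\alpha_l,\alpha_{l+m-1}]$ and $s\in[0,T^\alpha]$, set $c=s/T^\alpha\in[0,1]$, and introduce the $L^2(\Omega)$-valued curve $G:[\alpha_l,\alpha_{l+m-1}]\to L^2(\Omega)$, $G(\mu)=v^\mu(cT^\mu)=v(\mu,cT^\mu,\cdot)$; this is well defined because $cT^\mu\in[0,T^\mu]$, so $(\mu,cT^\mu)\in I$. By construction $G(\alpha)=v^\alpha(s)$ and $G(\alpha_j)=v^{\alpha_j}(T^{j}s/T^\alpha)$ for $j=l,\dots,l+m-1$, which are exactly the quantities appearing on the right-hand side of \eqref{nueva_inter3}. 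Since $\mu\mapsto T^\mu$ is smooth on the compact set $[\alpha_0,\alpha_L]$, with $T^\mu$ bounded below away from $0$ and its derivatives up to order $m$ bounded, repeated use of the chain rule (the Fa\`{a} di Bruno formula) shows that $G\in C^m$ and that $G^{(m)}(\mu)$ is a linear combination, with coefficients bounded by a constant $C=C(m)$ depending only on $m$ and the fixed curve $\mu\mapsto T^\mu$, of the mixed partial derivatives $\frac{\partial^{i+j}v^{\mu}}{\partial t^i\partial\mu^{j}}(cT^\mu)$ with $i+j\le m$; hence $\|G^{(m)}(\mu)\|_0^2\le C\sum_{i+j\le m}\left\|\frac{\partial^{i+j}v^{\mu}(cT^\mu)}{\partial t^i\partial\mu^{j}}\right\|_0^2$ for all $\mu\in[\alpha_l,\alpha_{l+m-1}]$.

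Next I would apply polynomial interpolation at the $m$ equispaced nodes $\alpha_l,\dots,\alpha_{l+m-1}$. Write $p(\alpha)=\sum_{j=l}^{l+m-1}\ell_j(\alpha)G(\alpha_j)$ for the interpolating polynomial of degree $<m$, with Lagrange basis functions $\ell_j$, and use the Peano-kernel form of the interpolation error, $G(\alpha)-p(\alpha)=\int_{\alpha_l}^{\alpha_{l+m-1}}K_m(\alpha,\mu)G^{(m)}(\mu)\,\di\mu$. After the affine rescaling $\mu=\alpha_l+\sigma\Delta\alpha$ the nodes become $0,1,\dots,m-1$, a configuration depending only on $m$; consequently the Lebesgue constant $\sum_{j}|\ell_j(\alpha)|$ is bounded by a constant depending only on $m$, while $|K_m(\alpha,\mu)|\le C(m)(\Delta\alpha)^{m-1}$ and hence $\int_{\alpha_l}^{\alpha_{l+m-1}}|K_m(\alpha,\mu)|\,\di\mu\le C(m)(\Delta\alpha)^{m}$, all uniformly in $l$ and $\Delta\alpha$. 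Bounding $\|p(\alpha)\|_0\le\sum_{j}|\ell_j(\alpha)|\,\|G(\alpha_j)\|_0$, applying the Cauchy--Schwarz inequality in $\mu$ to the error term (writing $|K_m|=|K_m|^{1/2}|K_m|^{1/2}$), and then using $\|G(\alpha)\|_0^2\le 2\|p(\alpha)\|_0^2+2\|G(\alpha)-p(\alpha)\|_0^2$, I obtain $\|G(\alpha)\|_0^2\le C(m)\sum_{j=l}^{l+m-1}\|G(\alpha_j)\|_0^2+C(m)(\Delta\alpha)^{2m-1}\int_{\alpha_l}^{\alpha_{l+m-1}}\|G^{(m)}(\mu)\|_0^2\,\di\mu$. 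Inserting the chain-rule bound from the first step and recalling that $cT^\mu=T^\mu s/T^\alpha$ yields \eqref{nueva_inter3}.

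Finally, to obtain \eqref{nueva_inter} I would integrate \eqref{nueva_inter3} over $s\in[0,T^\alpha]$. For each node $\alpha_j$ the change of variables $t=T^{j}s/T^\alpha$ gives $\int_0^{T^\alpha}\|v^{\alpha_j}(T^{j}s/T^\alpha)\|_0^2\,\di s=\frac{T^\alpha}{T^{j}}\int_0^{T^{j}}\|v^{\alpha_j}(t)\|_0^2\,\di t$, and $T^\alpha/T^{j}$ is bounded by $\big(\max_{[\alpha_0,\alpha_L]}T^\mu\big)\big/\big(\min_{[\alpha_0,\alpha_L]}T^\mu\big)$, a constant independent of $l$. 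For the remainder term, Fubini's theorem followed by the change of variables $t=T^\mu s/T^\alpha$ (for each fixed $\mu$) turns $\int_0^{T^\alpha}\sum_{i+j\le m}\left\|\frac{\partial^{i+j}v^{\mu}(T^\mu s/T^\alpha)}{\partial t^i\partial\mu^{j}}\right\|_0^2\,\di s$ into $\frac{T^\alpha}{T^\mu}\int_0^{T^\mu}\sum_{i+j\le m}\left\|\frac{\partial^{i+j}v^{\mu}(t)}{\partial t^i\partial\mu^{j}}\right\|_0^2\,\di t$, again with $T^\alpha/T^\mu$ uniformly bounded; collecting these estimates produces exactly \eqref{nueva_inter}. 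The only genuinely technical points are the Fa\`{a} di Bruno bookkeeping of the first step and the uniformity (in $l$ and $\Delta\alpha$) of the Lebesgue constant and the Peano-kernel bound in the second step; both are handled by reducing, via the affine rescaling $\mu=\alpha_l+\sigma\Delta\alpha$, to the fixed reference configuration of nodes $\{0,1,\dots,m-1\}$, so neither is a genuine obstacle — the whole argument is a quantitative, $L^2(\Omega)$-valued counterpart of the classical polynomial-interpolation error estimate.
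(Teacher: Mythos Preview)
Your argument is correct and follows essentially the same route as the paper: introduce the time-rescaled function $G(\mu)=v^\mu(T^\mu s/T^\alpha)$ (the paper's $w^\mu(s)$), apply Lagrange interpolation in the parameter at the $m$ nodes $\alpha_l,\dots,\alpha_{l+m-1}$, bound the remainder using the $H^m$-to-$L^\infty$ interpolation estimate, and undo the time rescaling by a change of variables. The only cosmetic differences are that the paper proves the integrated bound \eqref{nueva_inter} first and deduces \eqref{nueva_inter3} ``with the same arguments'', whereas you do it in the reverse order, and that the paper invokes the ready-made Sobolev bound \eqref{cota_inter_inf} (with $d=1$, $n=m$) in place of your explicit Peano-kernel computation; both yield the same $(\Delta\alpha)^{2m-1}$ factor.
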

for $T^j=T^{\alpha_j}$.
\begin{proof}
We consider $w:[\alpha_l,\alpha_{l+m-1}]\times[0,T^{\alpha}] \times\Omega \rightarrow {\Bbb R}$ given by
\begin{equation}
\label{law}
w(\mu,s,x)=v(\mu,(T^{\mu}/T^{\alpha})s,x),
\end{equation}
and denote~$w^{\mu}(s)=w(\mu,s,\cdot)$.
We first treat the case $m=2$.
Using the linear interpolant (with respect to the parameter), we can write
$$
w^\alpha=I_{2,\alpha}w^{\alpha} +R_{2,\alpha}w^{\alpha},\quad I_{2,\alpha}w^{\alpha} =w^{\alpha_l}\frac{(\alpha_{l+1}-\alpha)}{\Delta \alpha}
+w^{\alpha_{l+1}}\frac{(\alpha-\alpha_l)}{\Delta \alpha}.
$$
For any $s\in[0,T^\alpha]$ 
$$
\|w^{\alpha}(s)\|_0\le \|w^{\alpha_1}(s)\|_0+\|w^{\alpha_2}(s)\|_0+\|R_{2,\alpha}w^{\alpha}(s)\|_0.
$$
And then, 
$$
\int_0^{T^\alpha}\|w^{\alpha}(s)\|_0^2\ \di s\le 3\int_0^{T^\alpha}\|w^{\alpha_l}(s)\|_0^2\ \di s+3\int_0^{T^{\alpha}}\|w^{\alpha_{l+1}}(s)\|_0^2\ \di t+3\int_0^{T^\alpha}\|R_{2,\alpha}w^{\alpha}(s)\|_0^2\ \di s,
$$
so that, with the change of variables $s=T^\alpha t/T^{\mu}$, for $\mu=\alpha_l$ and $\mu=\alpha_{l+1}$ in the first and second integrals on the right-hand side, respectively, we get
$$
\int_0^{T^\alpha}\|v^{\alpha}(s)\|_0^2\ \di s\le C\int_0^{T^{l}}\|v^{\alpha_l}(t)\|_0^2\ \di t+C\int_0^{T^{{l+1}}}\|v^{\alpha_{l+1}}(t)\|_0^2\ \di t+3\int_0^{T^\alpha}\|R_{2,\alpha}w^{\alpha}(s)\|_0^2\ \di s.
$$
To bound the last term above, we observe that, applying \eqref{cota_inter_inf} to the Lagrange interpolant with respect to
$\alpha$ with $n=2$, we get for any $s\in[0,T^\alpha]$ and $x\in \Omega$.
$$
|R_{2,\alpha}w(\alpha,s,x)|\le c_\text{\rm int} (\Delta \alpha)^{2-1/2}\biggl( \int_{\alpha_l}^{\alpha_{l+1}} \left|\frac{\partial^2w(\mu,s,x)}{\partial\mu^2}\right|\,\di \mu\biggr)^{1/2}.
$$
Then,
$$
\|R_{2,\alpha}w^{\alpha}(s)\|_0^2\le c_\text{\rm int}^2 (\Delta \alpha)^{3}\ \int_{\alpha_l}^{\alpha_{l+1}} \left\|\frac{\partial^2w^{\mu}(s)}{\partial\mu^2}\right\|_0^2\,\di \mu,
$$
so that
\begin{align*}
\int_0^{T^\alpha}\|R_{2,\alpha}w^{\alpha}(s)\|_0^2\ \di s& \le c_\text{\rm int}^2 (\Delta \alpha)^3\int_0^{T^\alpha} \int_{\alpha_l}^{\alpha_{l+1}} \left\|\frac{\partial^2w^{\mu}(s)}{\partial\mu^2}\right\|_0^2\,\di \mu\, \di s,
\nonumber\\
 &= c_\text{\rm int}^2 (\Delta \alpha)^3 \int_{\alpha_l}^{\alpha_{l+1}} \int_0^{T^\alpha}\left\|\frac{\partial^2w^{\mu}(s)}{\partial\mu^2}\right\|_0^2\, \di s\,\di \mu,
\end{align*}
from which we reach \eqref{nueva_inter} for $m=2$ via the change of variables $s=T^\alpha t/T^{\mu} $ and~\eqref{law}. The case $m>2$ can be proved with a similar argument. The proof of \eqref{nueva_inter3} follows also easily with the same arguments.
\end{proof}
\begin{remark}\label{re1}
We can replaced in \eqref{nueva_inter}  $(\Delta \alpha)^{2m-1}$  by $(\Delta \alpha)^{2m}$ if we use in the proof of Lemma 5
the error bound \eqref{cota_inter} instead of \eqref{cota_inter_inf}. In that case, stronger regularity is required and the full term
$$C (\Delta \alpha)^{2m-1}\int_{\alpha_l}^{\alpha_{l+m-1}}\int_0^{T^\mu} 
\sum_{i+j\le m}\left\|\frac{\partial^{i+j}v^{\mu}(t)}{\partial t^i\partial\mu^{j}}\right\|_0^2\,\di t\,\di\mu,$$
in \eqref{nueva_inter} should be replaced by
$$C (\Delta \alpha)^{2m}\int_0^{T^\mu} \max_{\mu\in[{\alpha_l},{\alpha_{l+m-1}}]}
\sum_{i+j\le m}\left\|\frac{\partial^{i+j}v^{\mu}(t)}{\partial t^i\partial\mu^{j}}\right\|_0^2\,\di t.$$
Analogously, we can replace in \eqref{nueva_inter3}
$$C (\Delta \alpha)^{2m-1}
\int_{\alpha_l}^{\alpha_{l+m-1}}\sum_{i+j\le m}\left\|\frac{\partial^{i+j}v^{\mu}(T^\mu s/T^\alpha)}{\partial t^i\partial \mu^{j}}\right\|_0^2 \di \mu,\quad s\in[0,T^\alpha],$$
by
$$C (\Delta \alpha)^{2m}
\max_{\mu\in[{\alpha_l},{\alpha_{l+m-1}}]}\sum_{i+j\le m}\left\|\frac{\partial^{i+j}v^{\mu}(T^\mu s/T^\alpha)}{\partial t^i\partial \mu^{j}}\right\|_0^2,\quad s\in[0,T^\alpha].$$
\end{remark}
\subsubsection{Bound for $
\int_0^t \|(I-P^r)u^{\alpha}_{h,s}(s)\|_0^2\ \di s$}
\begin{lema}\label{lema_nu1}
For each $q\ge 2$, $m\ge 2$ there exist a constant~$C$ such that for $1\le r\le {d_r}$ the following bound holds
\begin{eqnarray}\label{eq:tra:pro_b_nu}
&&\int_{0}^{T^\alpha} \|(I-P^r)u_{h,t}^{\alpha}(t)\|_0^2\ \di t\le mC C_p^2 C_0\sum_{k={r+1}}^{d_r}\lambda_k \nonumber\\
&&\quad+C C_p^2(\Delta t)^{2q}\sum_{j=l}^{l+m-1}
\int_{0}^{T^j}\left\| \frac{\partial^{q+1} \nabla u_h^{\alpha_j}(t)}{\partial t^{q+1}}\right\|_0^2 \ \di t
\nonumber\\
&&\quad+ C C_p^2(\Delta \alpha)^{2m-1}\int_{\alpha_l}^{\alpha_{l+m-1}}\int_0^{T^\mu} 
\sum_{i+j\le m}\left\|\frac{\partial^{i+j+1}\nabla u_{h}^{\mu}(t)}{\partial t^{i+1}\partial\mu^{j}}\right\|_0^2\ \di t \di \mu,\nonumber
\end{eqnarray}
where $C_0=\max(4T,8(\alpha_L-\alpha_0)^2T)$, $T=\max_{0\le l\le L}T^l$, $\Delta t=\max_{0\le l\le L}\Delta t$,
under the assumption that $u_h^{\alpha_l}$, $u_h^{\alpha}$ are smooth enough such that the last 
two terms in \eqref{eq:tra:pro_b_nu} are well defined.
\end{lema}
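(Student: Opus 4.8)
The plan is to transfer the out-of-sample quantity to a combination of in-sample ones through the interpolation estimate~\eqref{nueva_inter} of Lemma~\ref{internu}, and then to apply the in-sample bound of Lemma~\ref{cota_t_nufix} termwise.

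First I would apply~\eqref{nueva_inter} to the function $v^{\mu}(s)=(I-P^r)u_{h,s}^{\mu}(s)$ (the $H_0^1$-projection error of the time derivative of the semidiscrete FOM approximation), which takes values in $V_h^k\subset H_0^1(\Omega)$ and is admissible because $u_h^{\mu}$ is assumed smooth enough in $t$ and in $\mu$, while $P^r$ is a bounded linear map onto the $t$- and $\mu$-independent subspace~$\bU^r$. Taking $l$ such that $\alpha\in[\alpha_l,\alpha_{l+m-1}]\subset[\alpha_0,\alpha_L]$ (the hypothesis of Lemma~\ref{internu}), this yields
\begin{align*}
\int_{0}^{T^\alpha}\|(I-P^r)u_{h,t}^{\alpha}(t)\|_0^2\,\di t
&\le C\sum_{j=l}^{l+m-1}\int_{0}^{T^{j}}\|(I-P^r)u_{h,t}^{\alpha_j}(t)\|_0^2\,\di t\\
&\quad{}+C(\Delta\alpha)^{2m-1}\int_{\alpha_l}^{\alpha_{l+m-1}}\int_{0}^{T^\mu}\sum_{i+j\le m}\left\|\frac{\partial^{i+j}}{\partial t^{i}\partial\mu^{j}}\bigl((I-P^r)u_{h,t}^{\mu}(t)\bigr)\right\|_0^2\,\di t\,\di\mu .
\end{align*}

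Next I would bound the two terms on the right separately. For the first sum, every $\alpha_j$ with $l\le j\le l+m-1$ is an in-sample parameter, so Lemma~\ref{cota_t_nufix} applies to each of the $m$ integrals; adding the $m$ bounds and using that $C_0$, $T$ and $\Delta t$ are defined as global maxima (hence uniform in $j$) produces exactly the first two terms of~\eqref{eq:tra:pro_b_nu}, namely $mCC_p^2C_0\sum_{k>r}\lambda_k+CC_p^2(\Delta t)^{2q}\sum_{j=l}^{l+m-1}\int_0^{T^j}\|\partial_t^{q+1}\nabla u_h^{\alpha_j}(t)\|_0^2\,\di t$. For the double integral I would use that $P^r$ commutes with $\partial_t$ and $\partial_\mu$, since $\bU^r$ does not depend on $t$ or $\mu$, so that $\partial_t^{i}\partial_\mu^{j}\bigl((I-P^r)u_{h,t}^{\mu}(t)\bigr)=(I-P^r)\partial_t^{i+1}\partial_\mu^{j}u_h^{\mu}(t)$; then Poincar\'e's inequality~\eqref{poincare} together with $\|\nabla(I-P^r)w\|_0\le\|\nabla w\|_0$ converts each summand into $C_p^2\bigl\|\nabla\partial_t^{i+1}\partial_\mu^{j}u_h^{\mu}(t)\bigr\|_0^2$, which is the third term of~\eqref{eq:tra:pro_b_nu}. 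Collecting the three contributions gives the claim.

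The heart of the argument has already been packaged into Lemma~\ref{internu} (moving from an out-of-sample parameter to a finite combination of neighbouring in-sample ones at the price of a high-order $\Delta\alpha$ remainder) and into Lemma~\ref{cota_t_nufix} (the in-sample estimate), so there is no genuinely hard step. The only points that need a moment's care are the commutation of the fixed projection $P^r$ with differentiation in $t$ and $\mu$, the uniformity of the constants $C_0$, $T$, $\Delta t$ across $j$, and checking that the smoothness hypotheses on $u_h^{\mu}$ and on $u_h^{\alpha_l}$ are precisely those making the right-hand side of~\eqref{eq:tra:pro_b_nu} well defined.
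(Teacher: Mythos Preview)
Your proposal is correct and follows essentially the same route as the paper: apply \eqref{nueva_inter} to $v^\mu=(I-P^r)u_{h,t}^{\mu}$, bound each in-sample integral by Lemma~\ref{cota_t_nufix}, and handle the $(\Delta\alpha)^{2m-1}$ remainder via Poincar\'e and $\|\nabla(I-P^r)w\|_0\le\|\nabla w\|_0$. Your explicit mention of the commutation of the fixed projection $P^r$ with $\partial_t$ and $\partial_\mu$ makes precise what the paper leaves implicit.
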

\begin{proof}
We first apply \eqref{nueva_inter} to $v^\alpha=(I-P^r)u_{h,t}^{\alpha}$ to get 
\begin{eqnarray*}
&&\int_0^{T^\alpha} \|(I-P^r)u_{h,s}^{\alpha}(s)\|_0^2\ \di s\le C\sum_{j=l}^{l+m-1}\int_0^{T^j} \|(I-P^r)u_{h,s}^{\alpha_j}(s)\|_0^2\ \di s
\nonumber\\
&&\quad+C(\Delta \alpha)^{2m-1}C_p^2 \int_{\alpha_l}^{\alpha_{l+m-1}}\int_0^{T^\mu} 
\sum_{i+j\le m}\left\|\frac{\partial^{i+1+j}\nabla u_{h}^{\mu}(t)}{\partial t^{i+1}\partial\mu^{j}}\right\|_0^2\ \di t \di \mu.
\end{eqnarray*}
To bound the second term we apply Poincar\'e inequality \eqref{poincare} taking into account $\|\nabla (I-P^r) v\|_0\le \|\nabla v\|_0$,
for any function $v	\in H_0^1(\Omega)$. Then, we get
\begin{eqnarray*}
&&\int_{\alpha_l}^{\alpha_{l+m-1}}\int_0^{T^\mu} 
\sum_{i+j\le m}\left\|\frac{\partial^{i+j}(I-P^r)u_{h,t}^{\mu}(t)}{\partial t^i\partial\mu^{j}}\right\|_0^2\ \di t \di \mu
\nonumber\\
&&\quad \le C_p^2 \int_{\alpha_l}^{\alpha_{l+m-1}}\int_0^{T^\mu} 
\sum_{i+j\le m}\left\|\frac{\partial^{i+j}\nabla u_{h,t}^{\mu}(t)}{\partial t^i\partial\mu^{j}}\right\|_0^2\ \di t \di \mu
\end{eqnarray*}
To bound the first term we apply Lemma \ref{cota_t_nufix} for $l=0,\ldots,L$. Then
\begin{eqnarray*}
\sum_{j=l}^{l+m-1}\int_0^{T^j} \|(I-P^r)u_{h,s}^{\alpha_j}(s)\|_0^2\ \di s
 &\le&m C C_p^2 C_0\sum_{k={r+1}}^{d_r}\lambda_k 
\nonumber\\&&\quad+C C_p^2(\Delta t)^{2q}\sum_{j=l}^{l+m-1}
\int_{0}^{T^j}\left\| \frac{\partial^{q+1} \nabla u_h^{\alpha_j}(t)}{\partial t^{q+1}}\right\|_0^2 \ \di t,
\end{eqnarray*}
which finishes the proof.
\end{proof}
\subsubsection{Bound for $
\int_0^t \|(I-P^r)u^{\alpha}_{h}(s)\|_0^2\ \di s$}
\begin{lema}\label{lema_nu2}
For each $q\ge 2$, $m\ge 2$, there exist a constant~$C$ such that for $1\le r\le {d_r}$ the following bound holds
\begin{eqnarray}\label{eq:tra:pro_b_nu_sin}
&&\int_{0}^{T^\alpha} \|(I-P^r)u_{h}^{\alpha}(t)\|_0^2\ \di t\le mC TC_X\sum_{k={r+1}}^{d_r}\lambda_k \nonumber
\\
&&\quad+C C_p^2(\Delta t)^{2q}\sum_{j=l}^{l+m-1}
\int_{0}^{T^j}\left\| \frac{\partial^{q} \nabla u_h^{\alpha_j}(t)}{\partial t^{q}}\right\|_0^2 \ \di t\nonumber\\
&&\quad+ C C_p^2(\Delta \alpha)^{2m-1}\int_{\alpha_l}^{\alpha_{l+m-1}}\int_0^{T^\mu} 
\sum_{i+j\le m}\left\|\frac{\partial^{i+j}\nabla u_{h}^{\mu}(t)}{\partial t^i\partial\mu^{j}}\right\|_0^2\ \di t \di \mu,
\end{eqnarray}
where $C_0=\max(4T,8(\alpha_L-\alpha_0)^2T)$, $T=\max_{0\le l\le L}T^l$, $\Delta t=\max_{0\le l\le L}\Delta t$,
under the assumption that $u_h^{\alpha_l}$, $u_h^{\alpha}$ are smooth enough such that the last 
two terms in \eqref{eq:tra:pro_b_nu_sin} are well defined.
\end{lema}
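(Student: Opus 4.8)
The plan is to mimic the proof of Lemma~\ref{lema_nu1} almost verbatim, replacing the derivative form of the temporal interpolation estimate by its plain form and the invocation of Lemma~\ref{cota_t_nufix} by that of Lemma~\ref{cota_nufixb}. First I would apply the bound~\eqref{nueva_inter} of Lemma~\ref{internu} to $v^\alpha=(I-P^r)u_h^\alpha$; this is legitimate since the projection $P^r$ depends neither on $t$ nor on $\alpha$, hence commutes with every mixed derivative $\partial^{i+j}/\partial t^i\partial\mu^{j}$, and since $u_h^\mu$ is assumed smooth enough for the last term of~\eqref{eq:tra:pro_b_nu_sin} to be finite. This gives
\begin{eqnarray*}
\int_0^{T^\alpha}\|(I-P^r)u_h^{\alpha}(s)\|_0^2\,\di s
&\le& C\sum_{j=l}^{l+m-1}\int_0^{T^j}\|(I-P^r)u_h^{\alpha_j}(t)\|_0^2\,\di t\\
&&{}+C(\Delta\alpha)^{2m-1}\int_{\alpha_l}^{\alpha_{l+m-1}}\int_0^{T^\mu}
\sum_{i+j\le m}\left\|\frac{\partial^{i+j}(I-P^r)u_h^{\mu}(t)}{\partial t^i\partial\mu^{j}}\right\|_0^2\,\di t\,\di\mu .
\end{eqnarray*}

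For the remainder (the double integral above) I would apply Poincar\'e's inequality~\eqref{poincare} together with the contraction property $\|\nabla(I-P^r)v\|_0\le\|\nabla v\|_0$, valid for every $v\in H_0^1(\Omega)$, to $v=\partial^{i+j}u_h^\mu(t)/\partial t^i\partial\mu^{j}$; this replaces $\|\partial^{i+j}(I-P^r)u_h^\mu/\partial t^i\partial\mu^{j}\|_0^2$ by $C_p^2\|\partial^{i+j}\nabla u_h^\mu/\partial t^i\partial\mu^{j}\|_0^2$ and thus reproduces, up to a universal factor, the last term in~\eqref{eq:tra:pro_b_nu_sin}. For the in-sample sum I would invoke Lemma~\ref{cota_nufixb}, i.e.\ the bound~\eqref{eq:tra:pro_c}, once for each of the $m$ indices $j=l,\dots,l+m-1$; these all lie in $\{0,\dots,L\}$ because $[\alpha_l,\alpha_{l+m-1}]\subset[\alpha_0,\alpha_L]$, so the hypotheses of Lemma~\ref{cota_nufixb} are met. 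Summing the $m$ resulting bounds produces the term $mC\,TC_X\sum_{k=r+1}^{d_r}\lambda_k$ and the term $CC_p^2(\Delta t)^{2q}\sum_{j=l}^{l+m-1}\int_0^{T^j}\|\partial^q\nabla u_h^{\alpha_j}(t)/\partial t^q\|_0^2\,\di t$.

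Collecting the three contributions and absorbing every absolute factor (each depending only on $q$, $m$ and $c_{\rm int}$) into a single constant $C$ yields~\eqref{eq:tra:pro_b_nu_sin}. I do not anticipate a genuine obstacle; the only delicate points are the commutation of $P^r$ with all the derivatives and the bookkeeping of derivative orders. Note in particular that, unlike in Lemma~\ref{lema_nu1}, there is no shift $i\mapsto i+1$ in the mixed derivatives here, because $v^\alpha$ is $u_h^\alpha$ itself rather than $u_{h,t}^\alpha$, so the orders $i+j\le m$ carry over unchanged; likewise the order of the time derivative in the $(\Delta t)^{2q}$ term is $q$ rather than $q+1$, since Lemma~\ref{cota_nufixb} rather than Lemma~\ref{cota_t_nufix} is used.
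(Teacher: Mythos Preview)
Your proposal is correct and follows exactly the approach the paper uses: the paper's own proof simply states that one argues as in Lemma~\ref{lema_nu1}, applying~\eqref{nueva_inter} to $v^\alpha=(I-P^r)u_h^\alpha$ rather than $(I-P^r)u_{h,t}^\alpha$ and invoking Lemma~\ref{cota_nufixb} in place of Lemma~\ref{cota_t_nufix}. Your bookkeeping remarks about the absence of the $i\mapsto i+1$ shift and the appearance of $q$ rather than $q+1$ are also accurate.
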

\begin{proof}
The proof is the same as the proof of Lemma \ref{lema_nu1} but applying \eqref{nueva_inter} to $v^\alpha=(I-P^r)u_{h}^{\alpha}$ instead of $v^\alpha=(I-P^r)u_{h,t}^{\alpha}$ and applying Lemma \ref{cota_nufixb}  instead of Lemma \ref{cota_t_nufix} for $l=0,\ldots,L$.
\end{proof}
Inserting \eqref{eq:tra:pro_b_nu} and \eqref{eq:tra:pro_b_nu_sin} into \eqref{er_pro_ur} we reach for all
$t\in[0,T^\alpha]$ and $\alpha\in[\alpha_0,\alpha_L]$
\begin{eqnarray}\label{cota_fin1}
\lefteqn{\|u_r^{\alpha}(t)-P^ru_h^\alpha(t)\|_0^2+2\nu_\alpha\int_0^t \|\nabla (u_r^{\alpha}(s)-P^ru_h^\alpha(s))\|_0^2 \ \di s}\nonumber\\
 &\le& e^{Kt}\|u_r^{\alpha}(0)-P^ru_h^\alpha(0)\|_0+C e^{Kt}T\left(m C_0+TC_X\right)\sum_{k={r+1}}^{d_r}\lambda_k\nonumber\\
 &&+C e^{Kt}TC_p^2\left((\Delta t)^{2q}\sum_{j=l}^{l+m-1}
\int_{0}^{T^j}\left(\left\| \frac{\partial^{q} \nabla u_h^{\alpha_j}(t)}{\partial t^{q}}\right\|_0^2 +
\left\| \frac{\partial^{q+1} \nabla u_h^{\alpha_j}(t)}{\partial t^{q+1}}\right\|_0^2\right)\ \di t\right.\\
&&\left.+(\Delta \alpha)^{2m-2/m}\int_{\alpha_l}^{\alpha_{l+m-1}}\int_0^{T_\mu} 
\sum_{i+j\le m}\left(\left\|\frac{\partial^{i+j+1}\nabla u_{h}^{\mu}(t)}{\partial t^{i+1}\partial\mu^{j}}\right\|_0^2+\left\|\frac{\partial^{i+j}\nabla u_{h}^{\mu}(t)}{\partial t^i\partial\mu^{j}}\right\|_0^2\ \di t \di \mu \right)\right).\nonumber
\end{eqnarray}
Now, we observe that we can bound
\begin{equation}\label{decom}
\|u_r^{\alpha}(t)-u_h^{\alpha}(t)\|_0^2\le 2\|u_r^{\alpha}(t)-P^ru_h^\alpha(t)\|_0^2+2\|(I-P^r) u_h^{\alpha}(t)\|_0^2,
\end{equation}
and apply \eqref{cota_fin1} to bound the first term on the right-hand side above. To bound the second term, in the case $\alpha=\alpha_l$, $l=0,\ldots, L$, we apply \cite[(67)]{temporal_nos} 
\begin{eqnarray*}
\|(I-P^r)\varphi(t)\|_0^2 
&\le& C q \max_{0\le n\le M}\|(I-P^r) \varphi(t_n)\|_0^2+C (\Delta t)^{2q}
\max_{0\le t\le T}\left\| \frac{\partial^q \nabla \varphi(t)}{\partial t^{q}}\right\|_0^2,
\end{eqnarray*} 
to $u_h^{\alpha_l}$ with $t_n=t_n^l$, $\Delta t =\Delta t_l$ and taking into account that $\Delta t_l\le \Delta t$, to get
\begin{eqnarray*}
\|(I-P^r)u_h^{\alpha_l}(t)\|_0^2 
&\le& C q \max_{0\le n\le M}\|(I-P^r)u_h^{\alpha_l}(t_n^l)\|_0^2+C (\Delta t)^{2q}
\max_{0\le t\le T^l}\left\| \frac{\partial^q \nabla u_h^{\alpha_l}(t)}{\partial t^{q}}\right\|_0^2.
\end{eqnarray*} 
And, applying \eqref{eq_point_2d}, for $t\in [0,T^l]$, 
\begin{eqnarray}\label{vale}
\|(I-P^r)u_h^{\alpha_l}(t)\|_0^2 
&\le& q C  C_{L^2} \sum_{k={r+1}}^{d_r}\lambda_k+C (\Delta t)^{2q}
\max_{0\le t\le T^l}\left\| \frac{\partial^q \nabla u_h^{\alpha_l}(t)}{\partial t^{q}}\right\|_0^2.
\end{eqnarray}
If $\alpha\neq\alpha_l$  we apply \eqref{nueva_inter3} to $v^\alpha=(I-P^r)u_h^{\alpha}$ and argue as before to reach for $t\in[0, T^\alpha]$
\begin{eqnarray}\label{nueva_inter4}
\|(I-P^r)u_h^{\alpha}(t)\|_0^2\ &\le& C\sum_{j=l}^{l+m-1} \|(I-P^r)u_h^{\alpha_j}(T^j t/T_\alpha)\|_0^2\\
&&\quad+C C_p^2(\Delta \alpha)^{2m-2/m}
\int_{\alpha_l}^{\alpha_{l+m-1}}\sum_{i+j\le m}\left\|\frac{\partial^{i+j}\nabla u_h^{\mu}(T^\mu t/T^\alpha)}{\partial t^i\partial \mu^{j}}\right\|_0^2 \di \mu.\nonumber
\end{eqnarray}
Inserting \eqref{vale} into \eqref{nueva_inter4}
we conclude for $t\in[0,T^\alpha]$
\begin{eqnarray}\label{nueva_inter5}
\|(I-P^r)u_h^{\alpha}(t)\|_0^2
&\le& q m C  C_{L^2} \sum_{k={r+1}}^{d_r}\lambda_k+C  (\Delta t)^{2q}
\sum_{j=l}^{l+m-1}\max_{0\le t\le T^j}\left\| \frac{\partial^q \nabla u_h^{\alpha_j}(t)}{\partial t^{q}}\right\|_0^2\\
&&\quad+C C_p^2(\Delta \alpha)^{2m-2/m}
\int_{\alpha_l}^{\alpha_{l+m-1}}\sum_{i+j\le m}\left\|\frac{\partial^{i+j}\nabla u_h^{\mu}(T^\mu t/T^\alpha)}{\partial t^i\partial \mu^{j}}\right\|_0^2 \di \mu.\nonumber
\end{eqnarray}
With \eqref{decom}, \eqref{cota_fin1} and \eqref{nueva_inter5} we conclude the following theorem.
\begin{Theorem}\label{main}
Assume $u_r^{\alpha}(t)=P^ru_h(0)$. For all $t\in[0,T^\alpha]$ and $\alpha\in[\alpha_0,\alpha_L]$. Then, there exists a constant $C$
depending on $T$, $\alpha_L-\alpha_0$ and $C_p$  such that the following bound holds for $q\ge2$, $m\ge2,$ whenever the functions $u_h^{\alpha}$ are smooth enough so that all the terms are well defined 
\begin{eqnarray}\label{max_puntual_todo}
\|u_r^{\alpha}(t)-u_h^{\alpha}(t)\|_0^2\le C_{m,q}
\sum_{k={r+1}}^{d_r}\lambda_k+C_{1,u_h}(\Delta t)^{2q}+C_{2,u_h}(\Delta \alpha)^{2m-1},
\end{eqnarray}
where
\begin{align*}
C_{m,q}&=C((m+1)+qm),\\
C_{1,u_h}&=C\sum_{j=l}^{l+m-1}\max_{0\le t\le {T^j}}\left\| \frac{\partial^q \nabla u_h^{\alpha_j}(t)}{\partial t^{q}}\right\|_0^2+
C\int_{0}^{T^j}\left(\left\| \frac{\partial^{q} \nabla u_h^{\alpha_j}(t)}{\partial t^{q}}\right\|_0^2 +
\left\| \frac{\partial^{q+1} \nabla u_h^{\alpha_j}(t)}{\partial t^{q+1}}\right\|_0^2\right)\ \di t,\\
C_{2,u_h}&=C\int_{\alpha_l}^{\alpha_{l+m-1}}\int_0^{T_\mu},
\sum_{i+j\le m}\left(\left\|\frac{\partial^{i+j+1}\nabla u_{h}^{\mu}(t)}{\partial t^{i+1}\partial\mu^{j}}\right\|_0^2+\left\|\frac{\partial^{i+j}\nabla u_{h}^{\mu}(t)}{\partial t^i\partial\mu^{j}}\right\|_0^2\ \di t \di \mu \right)\\
&\quad+C\int_{\alpha_l}^{\alpha_{l+m-1}}\sum_{i+j\le m}\left\|\frac{\partial^{i+j}\nabla u_h^{\mu}(T^\mu t/T^\alpha)}{\partial t^i\partial \mu^{j}}\right\|_0^2 \di \mu.
\end{align*}  
\end{Theorem}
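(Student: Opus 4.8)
The plan is to combine the splitting \eqref{decom} with the bounds already obtained for its two summands; the argument is essentially an assembly of results proved above, together with the standard existence/uniqueness of the semilinear POD-ROM solution $u_r^\alpha$ under the Lipschitz hypothesis on $g_\alpha$.

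First I would handle the summand $\|u_r^\alpha(t)-P^r u_h^\alpha(t)\|_0^2$ by invoking Theorem~\ref{th1}. Since the hypothesis $u_r^\alpha(0)=P^r u_h^\alpha(0)$ makes the initial term in \eqref{er_pro_ur} vanish, the right-hand side of \eqref{er_pro_ur} reduces to $e^{Kt}T^\alpha$ times the two integrals $\int_0^t\|(I-P^r)u_{h,s}^\alpha(s)\|_0^2\,\di s$ and $L^2\int_0^t\|(I-P^r)u_h^\alpha(s)\|_0^2\,\di s$. I would enlarge the upper limit from $t$ to $T^\alpha$ (both integrands are nonnegative), fix an index $l$ with $\alpha\in[\alpha_l,\alpha_{l+m-1}]\subset[\alpha_0,\alpha_L]$, and then apply Lemma~\ref{lema_nu1} to the first integral and Lemma~\ref{lema_nu2} to the second; bounding $e^{Kt}\le e^{KT}$ and $T^\alpha\le T$ then gives exactly \eqref{cota_fin1}.

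Next I would bound the second summand $\|(I-P^r)u_h^\alpha(t)\|_0^2$. For $\alpha=\alpha_l$ this is \eqref{vale}, which follows from the pointwise-in-time estimate \cite[(67)]{temporal_nos} together with the $L^2$-form of Lemma~\ref{lema2} (constant $C_{L^2}=C_p^2 C_{H_0^1}$). For a general out-of-sample $\alpha$ I would apply \eqref{nueva_inter3} to $v^\alpha=(I-P^r)u_h^\alpha$, which rewrites $\|(I-P^r)u_h^\alpha(t)\|_0^2$ as a finite sum of the terms $\|(I-P^r)u_h^{\alpha_j}(T^jt/T^\alpha)\|_0^2$, $j=l,\dots,l+m-1$, plus an $O((\Delta\alpha)^{2m-1})$ remainder (after using $\|\nabla(I-P^r)v\|_0\le\|\nabla v\|_0$ and the Poincar\'e inequality \eqref{poincare} to replace $(I-P^r)u_h^\mu$ by $\nabla u_h^\mu$ in the mixed $t$--$\mu$ derivatives); substituting \eqref{vale} into each summand produces \eqref{nueva_inter5}.

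Finally I would insert \eqref{cota_fin1} and \eqref{nueva_inter5} into \eqref{decom} and collect terms: the two contributions proportional to $\sum_{k>r}\lambda_k$ merge into $C_{m,q}=C((m+1)+qm)$, the $(\Delta t)^{2q}$ remainders into $C_{1,u_h}(\Delta t)^{2q}$, and the $(\Delta\alpha)^{2m-1}$ remainders into $C_{2,u_h}(\Delta\alpha)^{2m-1}$, with all factors $e^{KT}$, $T$, $C_p$, $C_0$, $C_X$ absorbed into a single constant $C$ depending on $T$, $\alpha_L-\alpha_0$ and $C_p$. I do not expect a genuine obstacle here, since all the analytic effort is already contained in Lemmas~\ref{cota_t_nufix}--\ref{lema_nu2} and in \eqref{nueva_inter5}; the only points needing care are that an index $l$ with $\alpha\in[\alpha_l,\alpha_{l+m-1}]$ exists (which requires $L\ge m-1$), that the exponent of $\Delta\alpha$ is kept consistently equal to $2m-1$ (Remark~\ref{re1} explains the upgrade to $2m$ under stronger regularity), and the bookkeeping that yields the stated forms of $C_{m,q}$, $C_{1,u_h}$ and $C_{2,u_h}$.
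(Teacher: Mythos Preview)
Your proposal is correct and follows essentially the same route as the paper: the paper's proof is literally the one-line sentence ``With \eqref{decom}, \eqref{cota_fin1} and \eqref{nueva_inter5} we conclude the following theorem,'' and you have accurately reconstructed how each of those three ingredients is obtained (Theorem~\ref{th1} with vanishing initial term plus Lemmas~\ref{lema_nu1}--\ref{lema_nu2} for \eqref{cota_fin1}; the pointwise bound \eqref{vale} combined with \eqref{nueva_inter3} for \eqref{nueva_inter5}) and how they are assembled via the splitting \eqref{decom}. Your extra remarks on the need for $L\ge m-1$ and on keeping the exponent $2m-1$ consistent are apt side observations that the paper does not spell out.
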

\begin{remark}
As explained in Remark \ref{re1}, in the last term of \eqref{max_puntual_todo} we can replaced $(\Delta \alpha)^{2m-1}$ by
$(\Delta \alpha)^{2m}$ with stronger regularity assumptions.

In view of the error bound \eqref{max_puntual_todo} we observe that the error depends on the quantity~$\Sigma_r$, on the
distance between two consecutive values in time, $\Delta t$, and on the distance between two consecutive
values of the parameter, $\Delta \alpha$. We observe that the rate of convergence in terms of $\Delta t$ and $\Delta \alpha$ depends on the regularity (with respect to~$t$ and~$\alpha$) of the finite-element approximations, while the constants multiplying the powers of~$\Delta t$ and~$\Delta \alpha$ depend on the size of the derivatives (with respect to~$t$ and~$\alpha$) of these finite-element approximations. Let us finally observe that the bound holds for different values of $q$ and $m$ and there is no way to know beforehand which values of $q$ and $m$ will give the smallest error in a given practical computation, although some estimations could be done by computing derivatives of the finite element approximations (on which the basis functions of the POD method are based).
\end{remark}
\section{The standard method}\label{sec5}
There are many papers in the literature in which instead of the method we propose they consider the dataset
consisting on the snapshots $u_h^{\alpha_l}(t_j^l)$, for $l=0,\ldots,L$ and $j=0,\ldots,M$. For simplicity let us assume that all the functions are defined in the same time interval $[0,T]$ so that the times $t_j^l=t_j$ do
not depend on $l$. If this is not the case, one can make a change of variable, arguing as in the proof of Lemma \ref{internu},
to shift all functions to the chosen interval $[0,T]$.
In the standard case, the set of snapshots is
$$
\bU= {\rm span}\left\{u_h^{\alpha_l}(t_j)\right\},\quad l=0,\ldots,L, \quad j=0,\ldots,M.
$$
Then, instead of \eqref{cota_pod_b}, one has
\begin{eqnarray}\label{39}
\frac{1}{N}\sum_{j=0}^M\sum_{l=0}^L\|\nabla (I-P^r)u_h^{\alpha_l}(t_j)\|_0^2=\sum_{k={r+1}}^{\tilde d_r}\tilde \lambda_k,
\end{eqnarray}
where $\tilde \lambda_k$ are the corresponding eigenvalues in the singular value decomposition. Let us observe that in this case we cannot prove \eqref{eq_point_2d} (for $\tilde \lambda_k$) although many authors assume in the error analysis that \eqref{eq_point_2d} holds. However, as stated in \cite{rubino_etal}, this is not always true. Following
\cite{Bosco_pointwise}, we can prove a priori bounds for the standard method without making such assumption. The argument reads as follows.

For any $l=0,\ldots,L$ we denote by 
$$
\overline\sigma^{\alpha_l}_{u_h}=\frac{1}{M+1}\sum_{j=0}^M\|\nabla (I-P^r)u_h^{\alpha_l}(t_j)\|_0^2,
$$
the average error.
Following the error analysis in \cite[Theorem 2.2]{Bosco_pointwise} one can prove that for any $l=0,\ldots,L$, there exists a constant $C$ such 
that the following bound holds
\begin{eqnarray*}\label{poin1}
\max_{0\le j\le M} 
\bigl\|\nabla (I-P^r)u_h^{\alpha_l}(t_j)\|_0^2 &\le& C \left(\overline\sigma^{\alpha_l}_{u_h}\right)^{{1}-\frac{1}{2m}}\bigl\| (I-P^r)\partial_t u_h^{\alpha_l}\bigr\|_{H^{m-1}(0,T,H_0^1)}^{\frac{1}{m}}
+4\overline\sigma^{\alpha_l}_{u_h}.
\end{eqnarray*}
Since the second term on the right-hand side above decays faster than the first one, we will write in the sequel the following inequality (that holds for a different constant $C$)
\begin{eqnarray}\label{poin1}
\max_{0\le j\le M} 
\bigl\|\nabla (I-P^r)u_h^{\alpha_l}(t_j)\|_0^2 &\le& C \left(\overline\sigma^{\alpha_l}_{u_h}\right)^{{1}-\frac{1}{2m}}\bigl\| (I-P^r)\partial_t u_h^{\alpha_l}\bigr\|_{H^{m-1}(0,T,H_0^1)}^{\frac{1}{m}}.
\end{eqnarray}
Now, we apply the same argument but respect to the parameter. For any $j$, $j=0,\ldots,M$ let us denote
$$
\overline\sigma^{j}_{u_h}=\frac{1}{L+1}\sum_{l=0}^L\|\nabla (I-P^r)u_h^{\alpha_l}(t_j)\|_0^2.
$$
Arguing as before, one can prove that for any $j=0,\ldots,M$, there exists a constant $C$ such 
that the following bound holds
\begin{align}\label{poin2}
\max_{0\le l\le L} 
\bigl\|\nabla (I-P^r)u_h^{\alpha_l}(t_j)\|_0^2 \le& C \left(\overline\sigma^{j}_{u_h}\right)^{{1}-\frac{1}{2m}}\bigl\| (I-P^r)\partial_\mu u_h^{\mu}(t_j)\bigr\|_{H^{m-1}(\alpha_0,\alpha_L,H_0^1)}^{\frac{1}{m}}.
\end{align}
Then
\begin{align*}
\overline\sigma^{\alpha_l}_{u_h}&=\frac{1}{M+1}\sum_{j=0}^M\|\nabla (I-P^r)u_h^{\alpha_l}(t_j)\|_0^2
\le \frac{1}{M+1}\sum_{j=0}^M \max_{0\le l\le L} 
\bigl\|\nabla (I-P^r)u_h^{\alpha_l}(t_j)\|_0^2\nonumber\\
&\le \frac{C}{M+1}\sum_{j=0}^M \left(\left(\frac{1}{L+1}\sum_{l=0}^L\|\nabla (I-P^r)u_h^{\alpha_l}(t_j)\|_0^2.\right)^{{1}-\frac{1}{2m}}C_{\mu,u}^j\right),
\end{align*}
where
$$
C_{\mu,u}^j=\bigl\| (I-P^r)\partial_\mu u_h^{\mu}(t_j)\bigr\|_{H^{m-1}(\alpha_0,\alpha_L,H_0^1)}^{\frac{1}{m}}.
$$
Applying discrete Holder inequality followed by \eqref{39} we get
\begin{align*}
\overline\sigma^{\alpha_l}_{u_h}
&\le C\left(\frac{1}{N}\sum_{j=0}^M \sum_{l=0}^L\|\nabla (I-P^r)u_h^{\alpha_l}(t_j)\|_0^2\right)^{{1}-\frac{1}{2m}}\max_{0\le j\le M}C_{\mu,u}^j\nonumber\\
&\le C\left(\sum_{k={r+1}}^{\tilde d_r}\tilde \lambda_k\right)^{{1}-\frac{1}{2m}}\max_{0\le j\le M}C_{\mu,u}^j.
\end{align*}
Going back to \eqref{poin1} we finally obtain for any $0\le j\le M$
\begin{eqnarray}\label{poin3} 
&&\|\nabla (I-P^r)u_h^{\alpha_l}(t_j)\|_0^2 \\
&&\le C \left(\sum_{k={r+1}}^{\tilde d_r}\tilde \lambda_k\right)^{\left({{1}-\frac{1}{2m}}\right)^2}\max_{0\le j\le M}(C_{\mu,u}^j)^{1-\frac{1}{2m}}
\bigl\| (I-P^r)\partial_t u_h^{\alpha_l}\bigr\|_{H^{m-1}(0,T,H_0^1)}^{\frac{1}{m}}.\nonumber
\end{eqnarray}
So that we finally reach
\begin{eqnarray}\label{sup_estandar}
\max_{0\le j\le M, 0\le l\le L}\|\nabla(I-P^r)u_h^{\alpha_l}(t_j^l)\|_0^2\le C c_{ML}\left(\sum_{k={r+1}}^{\tilde d_r}\tilde \lambda_k\right)^{1-\gamma_m},
\end{eqnarray}
where
\begin{eqnarray}
\gamma_m&=&\frac{1}{m}-\frac{1}{4m^2},\label{gamma_m}\\
 c_{ML}&=&\max_{0\le j\le M}(C_{\mu,u}^j)^{1-\frac{1}{2m}}
\max_{0\le l\le L}\bigl\| (I-P^r)\partial_t u_h^{\alpha_l}\bigr\|_{H^{m-1}(0,T,H_0^1)}^{\frac{1}{m}}\nonumber.
\end{eqnarray}
Comparing \eqref{sup_estandar} with \eqref{eq_point_2d} we observe that tail of the eigenvalues~$\tilde \Sigma_r^2=\sum_{k={r+1}}^{\tilde d_r}\tilde \lambda_k$ has exponent $1-\frac{1}{\gamma_m}$  instead of 1, and that, the smoother (with respect to~$t$ and~$\alpha$) the finite-element approximations are  (used to define the snapshots) the closer the exponent  gets to~1.

Once we have obtained a uniform bound for the error in the snapshots, \eqref{sup_estandar}, we can follow the error analysis in \cite[Theorem 2.3]{Bosco_pointwise} to prove a bound for the finite differences in time since the bound in \cite[Theorem 2.3]{Bosco_pointwise} only requires \eqref{sup_estandar}. Then, 
one can prove that for any $l=0,\ldots,L$, there exists a constant $C$ such 
that the following bound holds
\begin{eqnarray}\label{deriesta}
&&\Delta t \sum_{n=1}^M\|\nabla (I-P^r)D^t(u_h^{\alpha_l}(t_n))\|_0^2\nonumber\\
&&\quad\le C\left(c_{ML}\left(\sum_{k={r+1}}^{\tilde d_r}\tilde \lambda_k\right)^{1-\gamma_m}\right)^{{1}-\frac{1}{m}}\bigl\| (I-P^r)\partial_t u_h^{\alpha_l}\bigr\|_{H^{m-1}(0,T,H_0^1)}^{\frac{2}{m}}\nonumber\\
&&\quad \le Cc_{ML}^{1-\frac{1}{m}}\left(\sum_{k={r+1}}^{\tilde d_r}\tilde \lambda_k\right)^{1-\beta_m}\bigl\| (I-P^r)\partial_t u_h^{\alpha_l}\bigr\|_{H^{m-1}(0,T,H_0^1)}^{\frac{2}{m}},
\end{eqnarray}
where 
\begin{equation}\label{beta_m}
\beta_m=\gamma_m+\frac{1}{m}-\frac{\gamma_m}{m}.
\end{equation}
The exponent in the tail of the eigenvalues~$\tilde \Sigma_r^2$ in \eqref{deriesta} is now $1-\frac{1}{\beta_m}$,  instead of 1, and as before, the smoother finite-element approximations are, the closer the exponent gets to~1.

Finally, it is not difficult to check that the error analysis applied in Section \ref{sec4} to get the a priori bound for the new method in Theorem \ref{main} can be reproduced for the standard method applying \eqref{sup_estandar} and \eqref{deriesta} since these are the only bounds needed regarding the POD method. Then, one can prove the following bound for the standard method.
\begin{Theorem}\label{main2}
Assume $u_r^{\alpha}(t)=P^ru_h(0)$ and let $\gamma_m$ and $\beta_m$ be the constants defined in \eqref{gamma_m} and \eqref{beta_m}. For all $t\in[0,T^\alpha]$ and $\alpha\in[\alpha_0,\alpha_L]$, there exists a constant $C$
depending on $T$, $\alpha_L-\alpha_0$ and $C_p$  such that the following bound holds for $q\ge2$, $m\ge2,$ whenever the functions $u_h^{\alpha}$ are smooth enough so that all the terms are well defined 
\begin{eqnarray}\label{max_puntual_todo_sta}
\|u_r^{\alpha}(t)-u_h^{\alpha}(t)\|_0^2&\le& 
C_{m,q}\left(\left(\sum_{k={r+1}}^{d_r}\lambda_k\right)^{1-\gamma_m}+\left(\sum_{k={r+1}}^{d_r}\lambda_k\right)^{1-\beta_m}\right)\nonumber\\
&&\ +C_{u_h}^1(\Delta t)^{2q}+C_{u_h}^2(\Delta \alpha)^{2m-1},
\end{eqnarray}
where $C_{m,q}=C((m+1)+qm)$ and the constants $C_{u_h}^1$ and $C_{u_h}^2$ depend on some norms of some derivatives of $u_h$, as in \eqref{max_puntual_todo}, and the constant $C$ depends also on some norms of some derivatives of $u_h$ according to \eqref{sup_estandar} and \eqref{deriesta}. 
\end{Theorem}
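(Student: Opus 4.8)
The proof follows closely the proof of Theorem~\ref{main}; the only substantive change is that the two facts about the POD projection $P^r$ that were used there, namely the pointwise snapshot estimate~\eqref{eq_point_2d} and the difference--quotient estimate~\eqref{eq:seg}, are no longer available for the standard dataset and must be replaced by the weaker uniform bounds~\eqref{sup_estandar} and~\eqref{deriesta}. It is worth recalling precisely where the two estimates enter Section~\ref{sec4}: \eqref{eq:seg} is used only inside the proof of Lemma~\ref{cota_t_nufix}, whereas \eqref{eq_point_2d} is used in Lemma~\ref{cota_nufixb} and in the pointwise estimate~\eqref{vale}. Everything else in Section~\ref{sec4} --- the interpolation identity of Lemma~\ref{internu}, the transfer Lemmas~\ref{lema_nu1} and~\ref{lema_nu2}, and the assembly through~\eqref{cota_fin1}, \eqref{decom} and~\eqref{nueva_inter5} --- only manipulates the two quantities $\int_0^t\|(I-P^r)u_{h,s}^\alpha\|_0^2\,\di s$ and $\int_0^t\|(I-P^r)u_h^\alpha\|_0^2\,\di s$ and carries over verbatim once those two POD estimates are changed; moreover, under the assumption $u_r^\alpha(0)=P^r u_h(0)$ the initial--error term in the analogue of~\eqref{cota_fin1} drops.

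The only place where genuinely new work is needed is the derivation of~\eqref{sup_estandar} and~\eqref{deriesta}, which I would carry out as follows. After reducing, if necessary, all snapshots to a common time interval $[0,T]$ by the change of variables used in the proof of Lemma~\ref{internu}, apply the pointwise--in--time estimate of~\cite[Theorem~2.2]{Bosco_pointwise} once in the time variable to obtain~\eqref{poin1}, and once in the parameter variable to obtain~\eqref{poin2}. Averaging~\eqref{poin2} over the $M+1$ time levels, using the discrete H\"older inequality and the orthogonality identity~\eqref{39}, one bounds the time--averaged error $\overline\sigma^{\alpha_l}_{u_h}$ by $C(\tilde\Sigma_r^2)^{1-\frac{1}{2m}}$ times a factor built from parameter derivatives of $u_h$; inserting this back into~\eqref{poin1} produces the squared exponent $(1-\tfrac{1}{2m})^2=1-\gamma_m$ and hence~\eqref{sup_estandar}. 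This composition of an interpolation inequality in $t$ with one in $\alpha$, through the $L^1$--in--time average, is the delicate step: it is exactly what forces the exponent of the eigenvalue tail down from $1$ to $1-\gamma_m$. For~\eqref{deriesta} I would then use that~\cite[Theorem~2.3]{Bosco_pointwise} only requires a uniform bound on the snapshot projection error, so feeding~\eqref{sup_estandar} into it yields the difference--quotient bound with the worse exponent $1-\beta_m$, with $\beta_m$ as in~\eqref{beta_m}.

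With~\eqref{sup_estandar} and~\eqref{deriesta} in hand I would re-run Section~\ref{sec4}. In the analogue of Lemma~\ref{cota_t_nufix} the step~\eqref{eq:seg} is replaced by~\eqref{deriesta}, so $\sum_{k>r}\lambda_k$ in the right--hand side of~\eqref{eq:tra:pro_b} becomes $c_{ML}^{1-1/m}(\tilde\Sigma_r^2)^{1-\beta_m}$; in the analogues of Lemma~\ref{cota_nufixb} and of~\eqref{vale}, \eqref{eq_point_2d} is replaced by~\eqref{sup_estandar} together with Poincar\'e's inequality~\eqref{poincare} to pass from the $H_0^1$ seminorm to the $L^2$ norm, so the tail enters as $c_{ML}(\tilde\Sigma_r^2)^{1-\gamma_m}$. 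Lemmas~\ref{lema_nu1} and~\ref{lema_nu2} and the assembly into~\eqref{cota_fin1} then go through with $\sum_{k>r}\lambda_k$ systematically replaced by the appropriate power of $\tilde\Sigma_r^2$, and combining with~\eqref{decom}, \eqref{nueva_inter5} and Theorem~\ref{th1} gives~\eqref{max_puntual_todo_sta}; the $(\Delta t)^{2q}$ and $(\Delta\alpha)^{2m-1}$ terms and their constants are identical to those of Theorem~\ref{main}, while the overall constant $C$ now also absorbs $c_{ML}$, i.e.\ it depends on Sobolev norms of $\partial_t u_h$ and $\partial_\mu u_h$. I expect the main obstacle to be organizational rather than analytic: one must keep track of which occurrences of the eigenvalue tail inherit the exponent $1-\gamma_m$ (those coming from the pointwise snapshot bound) and which inherit $1-\beta_m$ (those coming from the difference--quotient bound), which is precisely why~\eqref{max_puntual_todo_sta} carries both powers.
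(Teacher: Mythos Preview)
Your proposal is correct and follows exactly the route the paper takes: Section~\ref{sec5} derives \eqref{sup_estandar} and \eqref{deriesta} precisely as you describe (interpolation in~$t$ via \cite[Theorem~2.2]{Bosco_pointwise}, then in~$\alpha$, combined through discrete H\"older and~\eqref{39}; then \cite[Theorem~2.3]{Bosco_pointwise} fed with~\eqref{sup_estandar}), and the paper's proof of Theorem~\ref{main2} is literally the one-sentence remark that the analysis of Section~\ref{sec4} goes through once \eqref{eq_point_2d} and~\eqref{eq:seg} are replaced by these two bounds. Your identification of where each replacement occurs (Lemma~\ref{cota_t_nufix} for~\eqref{eq:seg}; Lemma~\ref{cota_nufixb} and~\eqref{vale} for~\eqref{eq_point_2d}) and of which exponent ($1-\beta_m$ versus $1-\gamma_m$) each contributes is accurate and in fact more explicit than what the paper spells out.
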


\section{Continuous-in-time case with two parameters}\label{sec6}
In this section we show how to extend the definition of the new method to the case in which the approximations depend on two parameters. To simplify the exposition we assume we have numerical approximations $u_h^{\alpha_l,\beta_k}$, $l=0,\ldots,L$, $k=0,\ldots,S$ defined all at the same time interval $[0,T]$. The notation $u_h^{\alpha_l,\beta_k}$ means that the finite element approximation \eqref{eq:model} depends now on two parameters which can be chosen from the following: the  diffusion parameter, parameters in the nonlinear term, parameters in the forcing term or parameters in the initial condition.

For a fixed integer $L>0$, $\Delta \alpha=(\alpha_L-\alpha_0)/L$ and $\alpha_l=\alpha_0+l\Delta \alpha$, $l=0,\ldots,L$. 
For a fixed integer $S>0$, $\Delta \beta=(\beta_S-\beta_0)/S$ and $\beta_k=\beta_0+k\Delta \beta$, $k=0,\ldots,S$. 
We take $\Delta t=T/M$, for a fixed integer $M>0$.

In this case, as we will see in the definition of the set $\cal U$ below, the method is based on first, second and third order difference quotients.

We will denote by $D^t$ the finite difference respect to time and by $D^\alpha$ and $D^\beta$ the finite differences respect to $\alpha$ and $\beta$, respectively. This means that
\begin{eqnarray*}
&&D^t v^{\alpha_l,\beta_k}(t_j)=\frac{v^{\alpha_l,\beta_k}(t_j)-v^{\alpha_l,\beta_k}(t_{j-1})}{\Delta t},\\ 
&&D^\alpha v^{\alpha_l,\beta_k}(t_j)= \frac{v^{\alpha_l,\beta_k}(t_j)-v^{\alpha_{l-1},\beta_k}(t_{j})}{\Delta \alpha},\ 
D^\beta v^{\alpha_l,\beta_k}(t_j)= \frac{v^{\alpha_l,\beta_k}(t_j)-v^{\alpha_{l},\beta_{k-1}}(t_{j})}{\Delta \beta},
\end{eqnarray*}
where $v^{\alpha_l,\beta_k}:[0,T]\rightarrow V_h^k$, $l=0,\ldots,L$, $k=0,\ldots,S$.
For $N=(M+1)(L+1)(S+1)$ we define $\bU=\hbox{\rm span}({\cal U})$ where
\begin{eqnarray*}
\label{labU}
{\cal U}&=& \left\{\sqrt{N}u_h^{\alpha_l,\beta_k}(t_0),\ 0\le l\le L,\ 0\le k\le S,\right.\nonumber\\
&&\left.\sqrt{(L+1)(S+1)}D^t u_h^{\alpha_0,\beta_k}(t_j),\ 1\le j\le M,\ 0\le k\le S,\right.\nonumber\\
&&\left. \sqrt{(S+1)}D^t D^\alpha u_h^{\alpha_l,\beta_0}(t_j), 1\le j\le M,\ 1\le l\le L,\right.\nonumber\\
&&\left. D^t D^\alpha D^\beta u_h^{\alpha_l,\beta_k}(t_j), 1\le j\le M,\ 1\le l\le L,\ 1\le k\le S\right\}.
\end{eqnarray*}
Let us observe that the set ${\cal U}$ has $N=(M+1)(L+1)(S+1)$ elements. Keeping the same notation as before, for this new method
the following bound holds:
\begin{eqnarray}\label{cota_pod_b_2}
\sum_{l=0}^L\sum_{k=0}^S\|\nabla(I-P^r)u_h^{\alpha_l,\beta_k}(t_0)\|_0^2
+\frac{1}{M+1}\sum_{j=1}^M\sum_{k=0}^S\|\nabla(I-P^r)D^t u_h^{\alpha_0,\beta_k}(t_j)\|_0^2\nonumber\\
\ +\frac{1}{(M+1)(L+1)}\sum_{j=1}^M\sum_{l=1}^L\|\nabla(I-P^r)D^t D^\alpha (u_h^{\alpha_l,\beta_0}(t_j)\|_0^2
\nonumber\\
\ +\frac{1}{N}\sum_{j=1}^M\sum_{l=1}^L\sum_{k=1}^S \|\nabla(I-P^r)D^tD^\alpha D^\beta u_h^{\alpha_l,\beta_k}(t_j)\|_0^2=\sum_{k={r+1}}^{d_r}\lambda_k.
\end{eqnarray}
Next, we prove that pointwise estimates in time can also be proved for the method, analogous to \eqref{eq_point_2d}, see \eqref{eq_point_3d}. As in Section \ref{sec4}, we need to prove a previous lemma. 
\begin{lema} \label{lema1_2} Le $X$ be a Banach space and let $z(t_j,\alpha_l,\beta_k)\in X$, $0\le j\le M$, $0\le l\le L$, $0\le k\le S$ then
\begin{eqnarray}\label{inf_pri_2}
&&\|z(t_j,\alpha_l,\beta_k)\|_X^2\le 4\|z(t_0,\alpha_l,\beta_k)\|_X^2+4(j\Delta t)\sum_{s=1}^j( \Delta t)\|D^t z(t_s,\alpha_0,\beta_k)\|_X^2 
\nonumber\\
&&\quad +4 (j\Delta t)(l\Delta \alpha)\sum_{s=1}^j\sum_{n=1}^l (\Delta\alpha)(\Delta t)\|D^t D^\alpha  z(t_s,\alpha_n,\beta_0)\|_X^2 \nonumber\\
&&\quad+4 (j\Delta t)(l\Delta \alpha)(k\Delta \beta) \sum_{s=1}^j\sum_{n=1}^l\sum_{r=1}^k
(\Delta \beta)(\Delta \alpha)(\Delta t) \|D^t D^\alpha D^\beta z(t_s,\alpha_n,\beta_r)\|_X^2.
\end{eqnarray}
\end{lema}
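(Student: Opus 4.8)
The plan is to follow the strategy of Lemma~\ref{lema1}, adding one more level of telescoping. First I would expand $z(t_j,\alpha_l,\beta_k)$ in the time variable exactly as in~\eqref{ladnu}, writing
\[
z(t_j,\alpha_l,\beta_k)=z(t_0,\alpha_l,\beta_k)+\Delta t\sum_{s=1}^j D^t z(t_s,\alpha_l,\beta_k).
\]
Next, for each fixed $s$ I would peel off the $\alpha$-dependence of $D^t z(t_s,\alpha_l,\beta_k)$ by the same $\pm$-telescoping argument used in~\eqref{ladtb}, keeping $\beta_k$ fixed, to obtain
\[
D^t z(t_s,\alpha_l,\beta_k)=D^t z(t_s,\alpha_0,\beta_k)+\Delta\alpha\sum_{n=1}^l D^\alpha D^t z(t_s,\alpha_n,\beta_k).
\]
Then I would repeat the telescoping once more in the $\beta$-variable, now keeping $\alpha_n$ fixed, splitting $D^\alpha D^t z(t_s,\alpha_n,\beta_k)$ as
\[
D^\alpha D^t z(t_s,\alpha_n,\beta_k)=D^\alpha D^t z(t_s,\alpha_n,\beta_0)+\Delta\beta\sum_{r=1}^k D^\beta D^\alpha D^t z(t_s,\alpha_n,\beta_r),
\]
and use that the finite-difference operators commute, so that $D^\beta D^\alpha D^t=D^t D^\alpha D^\beta$.

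Substituting these identities back into the time expansion yields the four-term representation
\begin{align*}
z(t_j,\alpha_l,\beta_k)&=z(t_0,\alpha_l,\beta_k)+\Delta t\sum_{s=1}^j D^t z(t_s,\alpha_0,\beta_k)\\
&\quad+\Delta t\,\Delta\alpha\sum_{s=1}^j\sum_{n=1}^l D^t D^\alpha z(t_s,\alpha_n,\beta_0)\\
&\quad+\Delta t\,\Delta\alpha\,\Delta\beta\sum_{s=1}^j\sum_{n=1}^l\sum_{r=1}^k D^t D^\alpha D^\beta z(t_s,\alpha_n,\beta_r).
\end{align*}
Taking the $X$-norm, applying the triangle inequality, and then the elementary bound $(a_1+a_2+a_3+a_4)^2\le 4(a_1^2+a_2^2+a_3^2+a_4^2)$ produces four squared terms. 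On the second, third and fourth of these, the discrete Cauchy--Schwarz inequality applied respectively to the single, double and triple sums introduces the factors $(j\Delta t)$, $(j\Delta t)(l\Delta\alpha)$ and $(j\Delta t)(l\Delta\alpha)(k\Delta\beta)$ in front, leaving the remaining weights $\Delta t$, $(\Delta t)(\Delta\alpha)$ and $(\Delta t)(\Delta\alpha)(\Delta\beta)$ inside the sums; this is precisely~\eqref{inf_pri_2}.

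The only point requiring a little care — rather than a genuine obstacle — is the bookkeeping: one must carry out the intermediate telescoping in $\alpha$ at a fixed value $\beta_k$, and the subsequent telescoping in $\beta$ at a fixed value $\alpha_n$, so that every cross-difference operator that appears is a second- or third-order mixed difference quotient consistent with the snapshot set $\mathcal{U}$, and one invokes the commutativity of $D^t$, $D^\alpha$, $D^\beta$ to match the operator ordering $D^t D^\alpha D^\beta$ stated in~\eqref{inf_pri_2}. Everything else is a routine repetition of the two-parameter computation of Lemma~\ref{lema1}.
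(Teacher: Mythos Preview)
Your proposal is correct and follows essentially the same route as the paper: the paper also expands first in~$t$ via~\eqref{unodedos}, then handles the $(\alpha,\beta)$ dependence by invoking the argument of Lemma~\ref{lema1} with $(\alpha,\beta)$ in place of $(t,\alpha)$ (which is exactly your two successive telescoping steps written out explicitly), and finishes with the same four-term split, the bound $(a_1+\cdots+a_4)^2\le 4\sum a_i^2$, and discrete Cauchy--Schwarz.
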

\begin{proof}
We first observe that 
\begin{eqnarray}\label{unodedos}
z(t_j,\alpha_l,\beta_k)=z(t_0,\alpha_l,\beta_k)+\sum_{s=1}^jD^t z(t_s,\alpha_l,\beta_k)\Delta t.
\end{eqnarray}
Now, arguing as in Lemma \ref{lema1} (with parameters $\alpha$ and $\beta$ playing the role of $t$ and $\alpha$ and
fixing $t_s$) we get
\begin{eqnarray}\label{dosdedos}
z(t_s,\alpha_l,\beta_k)=z(t_s,\alpha_0,\beta_k)+\sum_{n=1}^lD^\alpha z(t_s,\alpha_n,\beta_0)\Delta \alpha+\sum_{n=1}^l\sum_{r=1}^k
D^\alpha D^\beta z(t_s,\alpha_n,\beta_r)\Delta \alpha \Delta \beta.
\end{eqnarray} 
Inserting \eqref{dosdedos} into \eqref{unodedos} we obtain
\begin{eqnarray*}
z(t_s,\alpha_l,\beta_k)&=&z(t_0,\alpha_l,\beta_k)+\sum_{s=1}^j D^t z(t_s,\alpha_0,\beta_k)\Delta t 
+\sum_{s=1}^j\sum_{n=1}^lD^t D^\alpha z(t_s,\alpha_n,\beta_0)\Delta \alpha \Delta t\nonumber\\
&&\quad +\sum_{s=1}^j\sum_{n=1}^l\sum_{r=1}^k D^t D^\alpha D^\beta z(t_s,\alpha_n,\beta_r)\Delta \alpha \Delta \beta \Delta t.
\end{eqnarray*}
Taking norms
\begin{eqnarray*}
&&\|z(t_s,\alpha_l,\beta_k)\|_X=\|z(t_0,\alpha_l,\beta_k)\|_X+\sum_{s=1}^j\| D^t z(t_s,\alpha_0,\beta_k)\|_X\Delta t 
\nonumber\\
&&\ +\sum_{s=1}^j\sum_{n=1}^l\|D^t D^\alpha z(t_s,\alpha_n,\beta_0)\|_X\Delta \alpha \Delta t +\sum_{s=1}^j\sum_{n=1}^l\sum_{r=1}^k \|D^t D^\alpha D^\beta z(t_s,\alpha_n,\beta_r)\|_X\Delta \alpha \Delta \beta \Delta t.
\end{eqnarray*}
And then
\begin{eqnarray*}
\|z(t_s,\alpha_l,\beta_k)\|_X^2&\le& 4\|z(t_0,\alpha_l,\beta_k)\|_X^2+4\left(\sum_{s=1}^j\| D^t z(t_s,\alpha_0,\beta_k)\|_X\Delta t\right)^2 
\nonumber\\
&&\ +4\left(\sum_{s=1}^j\sum_{n=1}^l\|D^t D^\alpha z(t_s,\alpha_n,\beta_0)\|_X\Delta \alpha \Delta t \right)^2
\nonumber\\
&&\ +4\left(\sum_{s=1}^j\sum_{n=1}^l\sum_{r=1}^k \|D^t D^\alpha D^\beta z(t_s,\alpha_n,\beta_r)\|_X\Delta \alpha \Delta \beta \Delta t\right)^2.
\end{eqnarray*}
We finally reach \eqref{inf_pri_2} applying discrete Cauchy-Schwarz inequality.
\end{proof}
\begin{lema} \label{lema2_2}
The following bound holds
\begin{eqnarray}\label{eq_point_3d}
\max_{0\le j\le M, 0\le l\le L, 0\le k\le S}\|(I-P^r)u_h^{\alpha_l,\beta_k}(t_j)\|_X^2\le C_X \sum_{k={r+1}}^{d_r}\lambda_k,
\end{eqnarray}
for $C_X=C_{H_0^1}:=4\max\left(1,2T^2,4T^2(\alpha_L-\alpha_0)^2,8T^2(\alpha_L-\alpha_0)^2(\beta_S-\alpha_0)^2\right)$ if $X=H_0^1(\Omega)$ and
$C_X=C_{L^2}:=C_p^2 C_{H_0^1}$ if $X=L^2(\Omega)$.
\end{lema}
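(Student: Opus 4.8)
The plan is to imitate the proof of Lemma~\ref{lema2} step by step, now using the three–index pointwise bound of Lemma~\ref{lema1_2} together with the orthogonality identity \eqref{cota_pod_b_2}. First I would set $z(t_j,\alpha_l,\beta_k)=(I-P^r)u_h^{\alpha_l,\beta_k}(t_j)$ and apply \eqref{inf_pri_2}. Since $(I-P^r)$ commutes with the difference operators $D^t$, $D^\alpha$, $D^\beta$ (they act only on the index, not on space), the right–hand side of \eqref{inf_pri_2} involves exactly the four families of quantities
$$
\|(I-P^r)u_h^{\alpha_l,\beta_k}(t_0)\|_X,\quad \|(I-P^r)D^t u_h^{\alpha_0,\beta_k}(t_s)\|_X,
$$
$$
\|(I-P^r)D^t D^\alpha u_h^{\alpha_n,\beta_0}(t_s)\|_X,\quad \|(I-P^r)D^t D^\alpha D^\beta u_h^{\alpha_n,\beta_r}(t_s)\|_X,
$$
which are precisely the ones appearing in \eqref{cota_pod_b_2} (here one uses $D^\alpha D^t=D^tD^\alpha$, $D^\beta D^\alpha D^t = D^tD^\alpha D^\beta$, etc.).

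Next I would bound the geometric prefactors: $j\Delta t\le T$, $l\Delta\alpha\le \alpha_L-\alpha_0$, $k\Delta\beta\le \beta_S-\beta_0$, and extend each inner sum to the full range $s=1,\dots,M$, $n=1,\dots,L$, $r=1,\dots,S$. This turns the four terms on the right of \eqref{inf_pri_2} into
$$
4\|(I-P^r)u_h^{\alpha_l,\beta_k}(t_0)\|_X^2
+4T\!\sum_{s=1}^M(\Delta t)\|(I-P^r)D^t u_h^{\alpha_0,\beta_k}(t_s)\|_X^2+\cdots,
$$
and so on for the mixed terms, the last one carrying the factor $8T(\alpha_L-\alpha_0)(\beta_S-\beta_0)$. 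To recover the averaged weights $1/(M+1)$, $1/((M+1)(L+1))$ and $1/N$ that appear in \eqref{cota_pod_b_2}, I would take $L,M,S\ge 1$ so that $(M+1)/M\le 2$, $(L+1)/L\le 2$, $(S+1)/S\le 2$ and absorb the resulting factors of $2$ into the constant; writing $C_{H_0^1}=4\max\bigl(1,2T^2,4T^2(\alpha_L-\alpha_0)^2,8T^2(\alpha_L-\alpha_0)^2(\beta_S-\beta_0)^2\bigr)$ this gives, with $X=H_0^1(\Omega)$,
$$
\|\nabla(I-P^r)u_h^{\alpha_l,\beta_k}(t_j)\|_0^2\le C_{H_0^1}\Bigl(\text{the whole left–hand side of \eqref{cota_pod_b_2}}\Bigr)=C_{H_0^1}\sum_{k=r+1}^{d_r}\lambda_k.
$$
Taking the maximum over $j,l,k$ yields \eqref{eq_point_3d} for $X=H_0^1(\Omega)$, and the case $X=L^2(\Omega)$ follows by applying the Poincar\'e inequality \eqref{poincare} to this bound, which produces the constant $C_{L^2}=C_p^2C_{H_0^1}$.

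The only delicate point is bookkeeping: one must check that the four sums produced by \eqref{inf_pri_2}, after the prefactor estimates and the $(\,\cdot+1)/(\,\cdot)\le 2$ replacements, line up term–by–term with the four weighted sums on the left of \eqref{cota_pod_b_2} (in particular that the weight $1/N$ attaches to the triple sum and $1/((M+1)(L+1))$ to the $D^tD^\alpha$ double sum), and that the commutativity of the difference quotients makes the snapshot families coincide exactly. Once that matching is verified the argument is essentially the same computation as in Lemma~\ref{lema2}, and no genuinely new idea is needed beyond the extra index.
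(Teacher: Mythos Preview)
Your proposal is correct and follows essentially the same route as the paper: set $z=(I-P^r)u_h^{\alpha_l,\beta_k}(t_j)$, apply Lemma~\ref{lema1_2}, bound the prefactors by $T$, $\alpha_L-\alpha_0$, $\beta_S-\beta_0$, convert the step sizes to the averaged weights $1/(M+1)$, $1/((M+1)(L+1))$, $1/N$ via $(\,\cdot+1)/(\,\cdot)\le 2$, and then invoke \eqref{cota_pod_b_2} (plus Poincar\'e for $X=L^2$). The only difference is cosmetic: the paper's stated constant has $(\beta_S-\alpha_0)^2$ where both you and the actual computation give $(\beta_S-\beta_0)^2$, which is clearly a typo in the statement.
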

\begin{proof}
We take $z(t_j,\alpha_l,\beta_k)=u_h^{\alpha_l,\beta_k}(t_j)-P^r u_h^{\alpha_l,\beta_k}(t_j)$ and apply \eqref{inf_pri_2} from Lemma \ref{lema1_2}.
Then
\begin{eqnarray*}
&&\|(I-P^r)u_h^{\alpha_l,\beta_k}(t_j)\|_X^2\le 4\|(I-P^r)u_h^{\alpha_l,\beta_k}(t_0)\|_X^2\nonumber\\
&&\ +4(j\Delta t)\sum_{s=1}^j( \Delta t)\|(I-P^r)D^t u_h^{\alpha_0,\beta_k}(t_s)\|_X^2 
\nonumber\\
&&\ +4 (j\Delta t)(l\Delta \alpha)\sum_{s=1}^j\sum_{n=1}^l (\Delta\alpha)(\Delta t)\|(I-P^r)D^t D^\alpha  u_h^{\alpha_n,\beta_0}(t_s)\|_X^2 \nonumber\\
&&\ +4 (j\Delta t)(l\Delta \alpha)(k\Delta \beta) \sum_{s=1}^j\sum_{n=1}^l\sum_{r=1}^k
(\Delta \beta)(\Delta \alpha)(\Delta t) \|(I-P^r)D^t D^\alpha D^\beta u_h^{\alpha_n,\beta_r}(t_s)\|_X^2.
\end{eqnarray*}
Taking $L\ge 1$, $M\ge 1$, $S\ge 1$ so that $(L+1)/L\le 2$, $(M+1)/M\le 2$ and $(S+1)/S\le 2$ and denoting by
$C_{H_0^1}=4\max\left(1,2T^2,4T^2(\alpha_L-\alpha_0)^2,8T^2(\alpha_L-\alpha_0)^2(\beta_S-\alpha_0)^2\right)$ we get
\begin{align*}
\|(I-P^r)u_h^{\alpha_l,\beta_k}(t_j)\|_X^2\le C_{H_0^1}\left(\|(I-P^r)u_h^{\alpha_l,\beta_k}(t_0))\|_X^2\right.\nonumber\\
\ +\frac{1}{M+1}\sum_{s=1}^j\|(I-P^r)D^t u_h^{\alpha_0,\beta_k}(t_s)\|_X^2 
\nonumber\\
\ +\frac{1}{(M+1)(L+1)}\sum_{s=1}^j\sum_{n=1}^l \|(I-P^r)D^t D^\alpha  u_h^{\alpha_n,\beta_0}(t_s)\|_X^2 \nonumber\\
\ \left.+\frac{1}{N} \sum_{s=1}^j\sum_{n=1}^l\sum_{r=1}^k
 \|(I-P ^r)D^t D^\alpha D^\beta u_h^{\alpha_n,\beta_r}(t_s)\|_X^2\right).
\end{align*}
Taking $X=H_0^1(\Omega)^d$ and applying \eqref{cota_pod_b_2} we get
\begin{equation}\label{here2}
\|\nabla(I-P^r)u_h^{\alpha_l,\beta_k}(t_j)\|_0^2\le C_{H_0^1} \sum_{k={r+1}}^{d_r} \lambda_k.
\end{equation}
Since the above inequality is valid for any $0\le j\le M$, $0\le l\le L$ and $0\le k\le S$, taking the maximum we reach \eqref{eq_point_3d} for $X=H_0^1(\Omega)$. In the case $X=L^2(\Omega)$ we conclude applying Poincar\'e inequality \eqref{poincare} to \eqref{here2} and taking the maximum.
\end{proof}
The rest of the analysis of the method can be carried out as in Section \ref{sec4}. We do not include the details here to avoid further increasing the length of the paper. Also, the ideas in Section \ref{sec5}, can be extended to this case, so that the error analysis of the standard method with two parameters can also be handled in a similar way. 

\section{Numerical Experiments}\label{sec7}

We consider the system of the Brusselator with diffusion in one spatial dimension
\begin{equation}
\label{bruss1D}
\begin{array}{rclcl}
y_t& =& \nu y_{xx} + y^2z - (\beta+1)y + \alpha,& \quad& (x,t)\in(0,1)\times (0,T],\\
z_t &=& \nu z_{xx} + \beta y -  y^2z ,&&  (x,t)\in(0,1)\times (0,T],\\
&&y_x(0,t) = z_x(0,t)=0,&& t\in(0,T],\\
&&y(1,t)=\alpha,\quad z(1,t)=\beta/\alpha, &&  t\in(0,T],\\
&&y(x,0)=y_0(x),\quad z(x,0)=z_0(x), && x\in(0,1).
\end{array}
\end{equation}
There is a steady state solution $y=\alpha$, $z=\beta/\alpha$ which, for certain values of~$\alpha$ and~$\beta$, is unstable and a (stable) periodic orbit develops, 
so that the system possesses asymptotic nontrivial dynamics, as opposed to many reaction-diffusion systems where solutions tend to a steady state as time tends to infinity. For $\nu=0$, stable periodic orbits exist only for $\beta >1+\alpha^2$, and for a smaller set of values $(\alpha,\beta)$ for $\nu>0$.  We remark that, in our opinion, it makes more sense to get the data set for POD methods in the attractor to which all solutions converge rather than in transients, and stable periodic orbits and invariant tori, with their nontrivial dynamics, are good candidates to test numerical methods.

In order to work with homogeneous boundary conditions, we rewrite the solution as
$$
\begin{array}{rcl} y &=& \alpha + u\\
z&=&\frac{\beta}{\alpha}+v,
\end{array}
$$
so that the steady state solution corresponds to~$u=v=0$. In the sequel, we detone by~$\bu=(u,v)$ the two-component solution of~\eqref{bruss1D} in the new variables~$u$ and~$v$.

We comment first of the selection of parameters, which was guided by the case with two parameters commented at the end of the present section. For $\nu=0.1$ and~$\alpha=1$, the stable orbits exists only for $\beta\in [2.4539,5.2979]$. Also, for $\beta$ in the vicinity of~$2.4539$, convergence to the periodic orbit is slow, so that we decided to consider only $\beta\ge 2.75$. On the other hand,  for $\beta>4$, the periodic solutions present large variations in a small part of the period while changing little in the rest of the period. This can be seen in~Fig.~\ref{fig_4p5} where we show the $L^2$ norm of~$\bu_{h,t}$ for~$\beta=4.5$ and $\nu=0.01$. Large variations can be seen near the endpoints of the period, while in the rest of the period the time derivative has a moderate size. In this situation, we have found that POD methods based on snapshots taken at equally-spaced times as those considered in the present paper need large values of~$r$ to produce accurate approximations.  A better alternative could be to concentrate snapshots in those parts with larger variations (see \cite{temporal_nos}). This will be subject of future research. Since in the present paper we confine ourselves to equally-spaced times, we decided to consider only~$\beta\le 4.25$.
\begin{figure}[h]
\begin{center}
\includegraphics[height=2.9truecm]{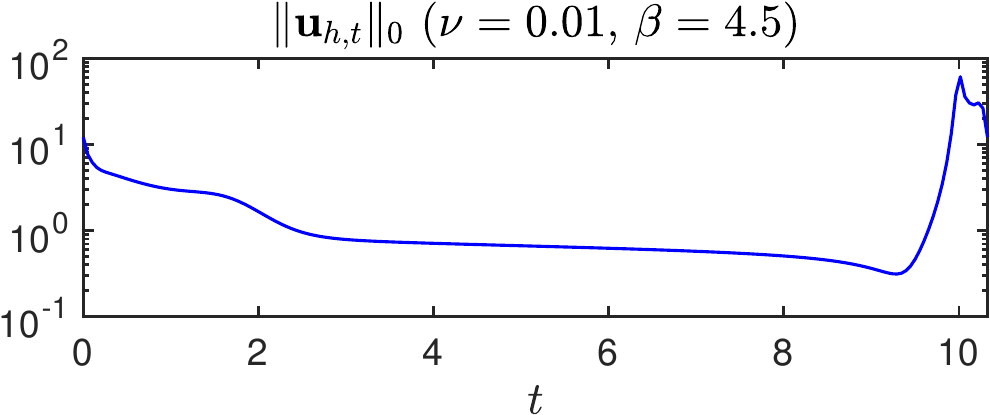}
\caption{\label{fig_4p5} Variation of the time derivative over a period for~$\nu=0.01$ and~$\beta=4.5$}
\end{center}
\end{figure}

We first present the results for the POD method for a system with one parameter. For this purpose, we consider~\eqref{bruss1D} with~$\nu=0.01$, $\alpha=1$ and~$\beta\in[2.75,4.25]$, where the periods of~the family of periodic solutions range from~$T=6.7725$ for $\beta=2.75$ to~$T=9.5949$ for $T=4.25$ (both values rounded to five significant digits). 
For the FEM approximation in~space, we consider quadratic elements on a uniform partition of~$[0,1]$ into elements of length~$h=1/50$. For the time integration we used the numerical differentiation  formulae (NDF) \cite{NDF1}, 
in the variable-step, variable-order implementation of {\sc Matlab}'s command {\tt ode15s} \cite{odesuite}, with tolerance values $10^{-8}$ and $10^{-11}$ for relative and absolute values, respectively, of the local error.
To compute a periodic solution of a FEM approximation to~\eqref{bruss1D} for a given value of~$\beta$, we proceeded as follows.
Taking a small perturbation of the unstable equilibrium as initial condition, we computed the corresponding solution and the times where the $L^2$-norm of the solution reached a local maximum (these were found as ceros of the function $t\mapsto (\bu_h,\bu_{h,t})$), the difference between two consecutive values (sufficiently large to avoid including other local minima inside a period) being an approximation to the period for sufficiently large~$t$; the solution was computed until two consecutive approximations to the period differ in less that $10^{-7}$ (in
relative size). The corresponding value $\bu_h$ of the solution was kept as initial condition for the periodic orbit.  We did this for $\beta=2.75$, $3.25$, $3.75$
and~$4.25$, repeating the computations with $h=1/100$ and comparing the results to get an idea of the error committed by the FEM method with $h=1/50$. The maximum of these errors, which was achieved for~$\beta=4.25$, was $1.04\times 10^{-4}$ in the $L^2$ norm and~$1.73\times 10^{-3}$ in the $H^1$ norm.

For the POD method, we took $M=64$, that is, for every $\beta=2.75, 3.25, 3.75, 4.25$ we took $65$ snapshots on equally distributed times $t_n^{\beta}$ on each period~$[0,T^\beta]$, and
the corresponding POD basis and POD approximations~$\bu_r$ were computed as described in~Section~\ref{sec3} for $r=18$, $24$, $30$ and~$36$.
In Table~\ref{table1}, besides showing the quantity~$\Sigma_r$ defined in~\eqref{the_sigma_r} in the second column, we show,  on the third and fourth columns, the maximum of the errors
\begin{equation}
\label{error_notation}
{\boldsymbol \varepsilon}_r(t) = (I-P_r)\bu_h(t),\qquad \be_r(t) = \bu_h(t)-\bu_r(t),
\end{equation}
 respectively,
in the $H^1$ norm (in all, cases, achieved for~$\beta=4.25$), measured on a very fine partition over each period (2049 equally-distributed points). 
\begin{table}[h]
$$
\begin{array}{|c|c|c|c|c|c|}
\hline 
r & \Sigma_r &
 {\displaystyle \max_{\beta_k,t}}\|\nabla {\boldsymbol \varepsilon_r}(t) \|_0  &
 {\displaystyle \max_{\beta_k,t}}\|\nabla \be_r(t) \|_0 & {\displaystyle \max_{\beta_k,t_n}}\|\nabla {\boldsymbol \varepsilon_r}(t_n) \|_0  &
{\displaystyle \max_{\beta_k,t_n}}\|\nabla \be_r(t_n) \|_0
 \\ \hline
18 & 1.43{\rm e}{-2} & 3.32{\rm e}{-1} & 4.63{\rm e}{-1} &1.10{\rm e}{-2} & 3.83{\rm e}{-1} \\ \hline
 24 & 1.40{\rm e}{-3} & 1.40{\rm e}{-1} & 1.59{\rm e}{-1} & 1.52{\rm e}{-3} & 1.77{\rm e}{-2} \\ \hline
30 & 5.22{\rm e}{-5} & 2.58{\rm e}{-2} & 2.90{\rm e}{-2} & 5.61{\rm e}{-5} & 3.62{\rm e}{-3} \\ \hline
36 & 2.65{\rm e}{-6} & 4.98{\rm e}{-3} & 5.53{\rm e}{-3} & 3.41{\rm e}{-6} & 5.69{\rm e}{-4} \\ \hline
\end{array}
$$
\caption{\label{table1} Maximum errors ${\boldsymbol\varepsilon}_r$ and~$\be_r$ for  $M=64$ and~$\beta=2.75$, $3.25$, $3.75$
and~$4.25$.}
\end{table}
It can be noticed that  both~${\boldsymbol\varepsilon}_r$ and~$\be_r$ have very similar values, and that these are much larger than the corresponding values of~$\Sigma_r$. Yet if we look at the error on the times~$t_n$ where the snapshots were taken (last two columns of Table~\ref{table1}), we see that the errors decrease significantly and that the values of ${\boldsymbol \varepsilon_r}$ are very much in line with those of~$\Sigma_r$. The explanation for the disparity between the maximum errors when measured on all values of time over a period or only on those corresponding to the snapshots of the dataset can be seen in Figure~\ref{fig_err_beta}, where, on the top plots, for $\beta=4.25$, $r=30$ and $M=32$, the values~$\|\be_r(t)\|_0$ are shown in a continuous blue line, those of~$\|{\boldsymbol\varepsilon}_r(t)\|_0$ on a discontinuous magenta line, those of~$\|\be_r(t_n)\|_0$  with a red circle and those
of~$\|{\boldsymbol\varepsilon}_r(t_n)\|_0$ with a red cross. We see that $\be_r$ and~${\boldsymbol\varepsilon_r}$  allways coincide or are very close except at the end of the interval, where they differ significantly only on the values of~$t_n$ (the right-plot is a magnification of the left-one at the end of the period). On the bottom plots we show the errors at the end of the period for $M=64$ and $M=128$, and we notice that, they do not decrease with respect to the case~$M=32$, and that the maximum of the errors over the values $t_n$ actually increases as more of these values of time fall in the part where the largest errors are (this can be confirmed in~Table~\ref{table2}, where we show these maxima).

\begin{figure}[h]
\begin{center}
\includegraphics[height=2.9truecm]{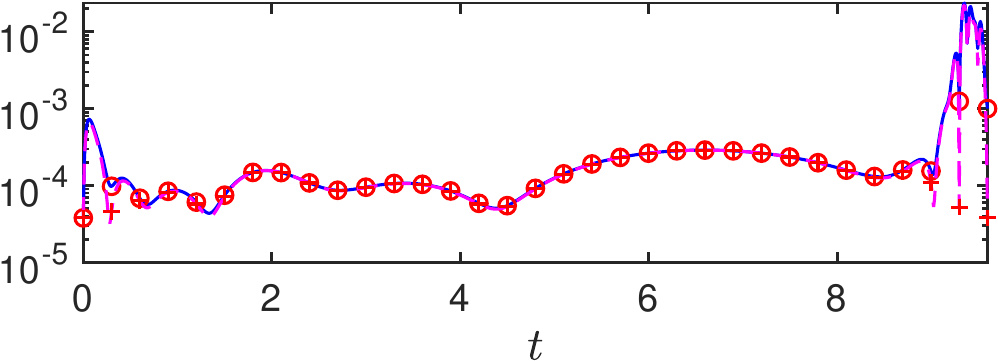} \quad
\includegraphics[height=2.9truecm]{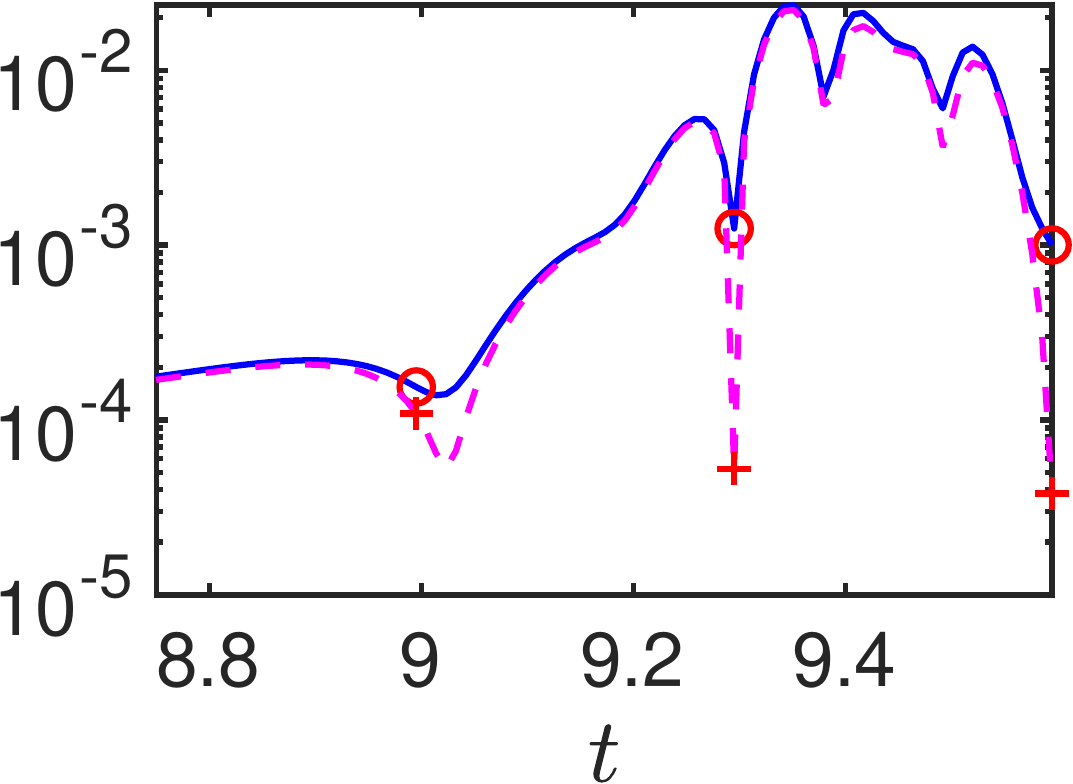} 

\includegraphics[height=2.9truecm]{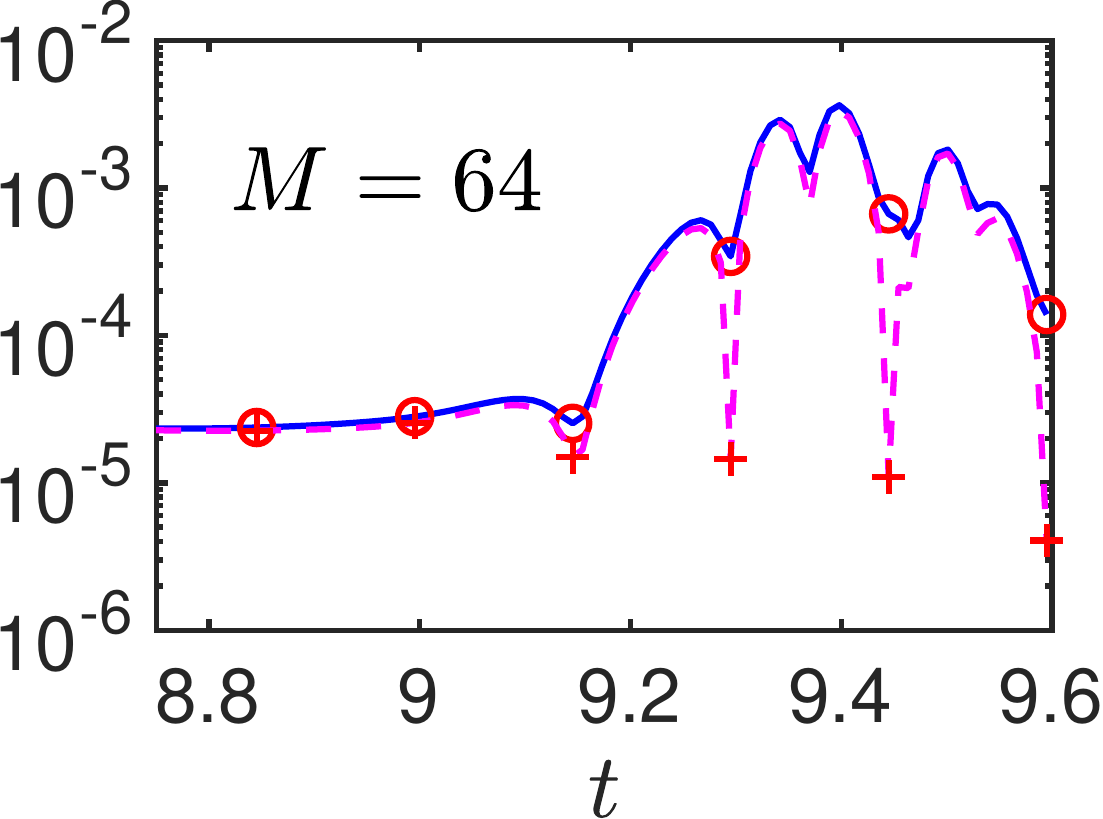} \quad
\includegraphics[height=2.9truecm]{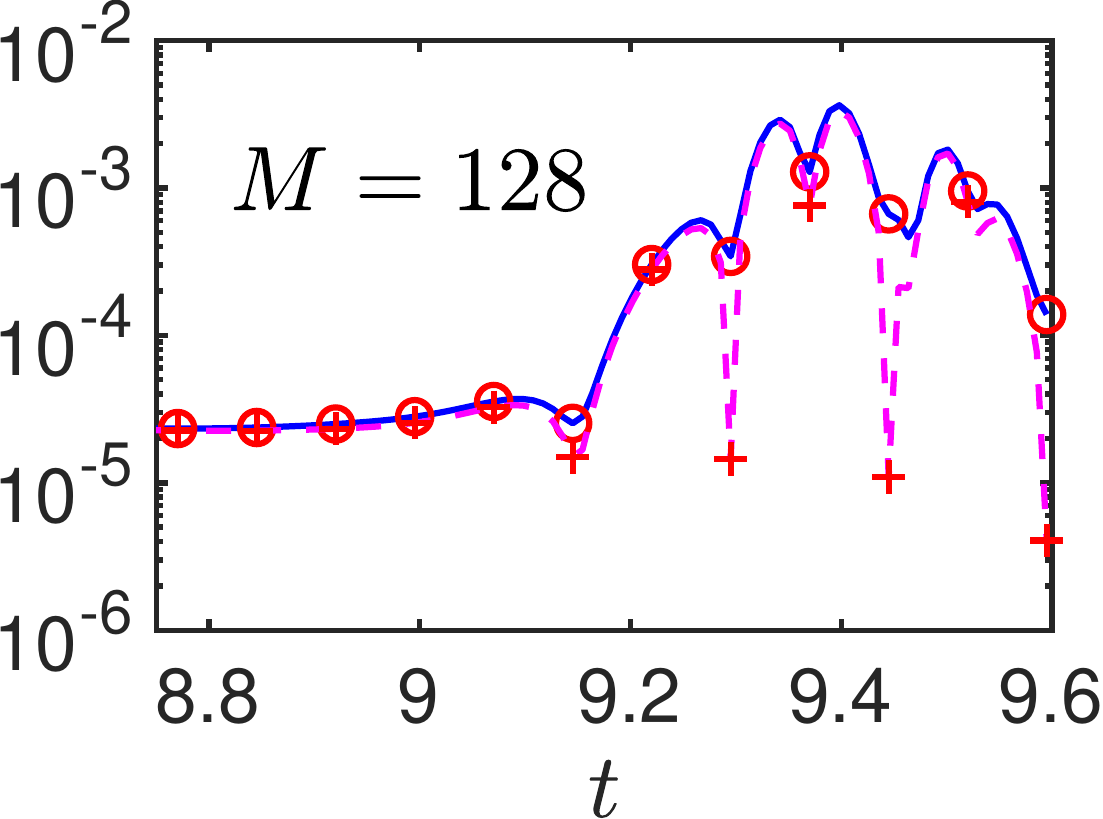} 

\caption{\label{fig_err_beta} Errors $\|\nabla \be_r(t)\|_0$ (continuous blue line), $\|\nabla {\boldsymbol\varepsilon}_r(t)\|_0$ (discontinuous magenta line), of~$\|\nabla \be_r(t_n)\|_0$  (red circles) and~$\|{\boldsymbol\varepsilon}_r(t_n)\|_0$ (red crosses) for $\beta=4.25$. Top: $M=32$ on period $[0,T^\beta]$ (left) and detail at the end of the period (right); bottom: detail at the end of the period for~$M=64$ and~$M=128$.}
\end{center}
\end{figure}
\begin{table}[h]
$$
\begin{array}{|c|c|c|c|c|c|}
\hline 
r &
\genfrac{}{}{0pt}{0}{{\displaystyle \max_{t_n}}\|\nabla {\boldsymbol \varepsilon_r}(t_n) \|_0}{(M=32)}  &
\genfrac{}{}{0pt}{0}{{\displaystyle \max_{t_n}}\|\nabla {\boldsymbol \varepsilon_r}(t_n) \|_0}{(M=128)} &
\genfrac{}{}{0pt}{0}{{\displaystyle \max_{t_n}}\|\nabla\be_r(t_n) \|_0}{(M=32)}  &
\genfrac{}{}{0pt}{0}{{\displaystyle \max_{t_n}}\|\nabla\be_r(t_n) \|_0}{(M=128)}
 \\ \hline
18  & 5.47{\rm e}{-3} & 2.17{\rm e}{-1} &3.16{\rm e}{-2} & 3.83{\rm e}{-1} \\ \hline
 24 & 4.25{\rm e}{-4} & 5.98{\rm e}{-2} & 1.02{\rm e}{-2} & 8.48{\rm e}{-2} \\ \hline
30 & 6.35{\rm e}{-6} & 1.05{\rm e}{-2} & 5.21{\rm e}{-3} & 1.30{\rm e}{-2} \\ \hline
36  & 4.43{\rm e}{-7} & 4.98{\rm e}{-3} & 7.34{\rm e}{-4} & 5.53{\rm e}{-3} \\ \hline
\end{array}
$$
\caption{\label{table2} Maximum errors ${\boldsymbol\varepsilon}_r$ and~$\be_r$ over times~$t_n=T^\beta/M$ for~$\beta=2.75$, $3.25$, $3.75$
and~$4.25$ and $M=32,128$}
\end{table}

To decrease the errors for~$t=[0,T^\beta]$ one can either increase~$r$ or increase~$L$ (the number of values of~$\beta$). For example, if for
for~$M=64$, we take~$r=42$, then, errors~${\max_{\beta_k,t}}\|\nabla {\boldsymbol \varepsilon_r}(t) \|_0$ and~${\max_{\beta_k,t}}\|\nabla \be_r(t) \|_0$ go down to~$2.43\times 10^{-3}$ and~$2.81\times 10^{-3}$, respectively, very close to
the error in the FEM approximation with $h=1/50$. On the other hand, if for~$M=64$ and $r=36$ we increase $L$ from~3 to~15, then, errors errors~${\max_{\beta_k,t}}\|\nabla {\boldsymbol \varepsilon_r}(t) \|_0$ and~${\max_{\beta_k,t}}\|\nabla \be_r(t) \|_0$ go down to~$1.57\times 10^{-3}$ and~$1.64\times 10^{-3}$, again very close to the error of the FEM approximation with $h=1/50$. 

Once we have found the value of~$r$ for which the POD method reproduces the data set with an accuracy similar to FEM approximations in the data set, we now check its performance out of the data set. For this purpose, we use the POD method with $r=42$ to compute the periodic orbits for $31$ values of~$\beta$ equally spaced in~$[2.75,4.25]$, using the same technique we used above for the data set, that is, starting with a small perturbation of the zero solution, we integrated the POD system in time checking for the times where the $L^2$ norm of~$\bu_r$ achieved a local maximum, taking the difference between two consecutive local maxima (with sufficiently large value) as an approximation to the period and stopping when the relative error between two consecutive approximations to the period differed in less than $10^{-8}$, taking $\bu_r$ at the last of these maxima as the initial condition for the computation of the periodic orbit. Notice that this process is done without any further computation with the FEM method, so that the POD method is used to compute something not previously computed with the FEM method. Then, to check the accuracy of the POD method, we also computed the periods and the initial conditions with the FEM for the same 31 values of~$\beta$. The accuracy of the POD method proved to be excellent in the computation of the periods, since their relative error with respect to the periods of the FEM approximation was below~$3.69\times 10^{-9}$.

\begin{figure}[h]
\begin{center}
\includegraphics[height=5truecm]{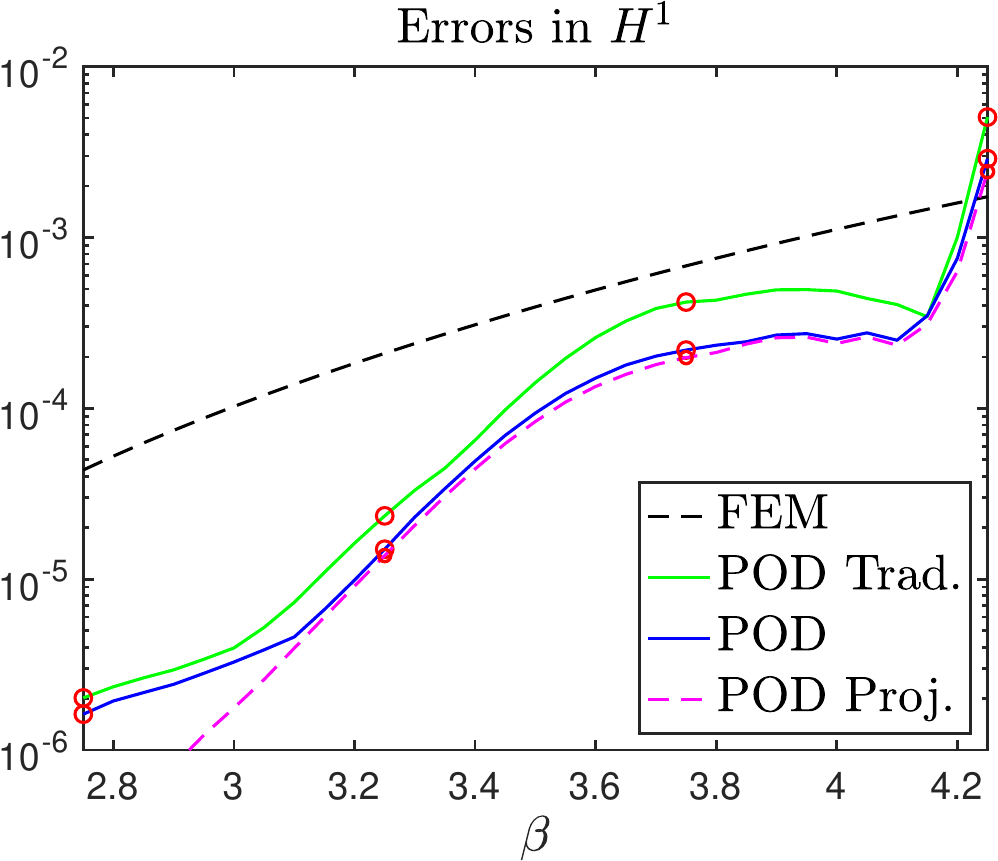}\quad 
\includegraphics[height=5truecm]{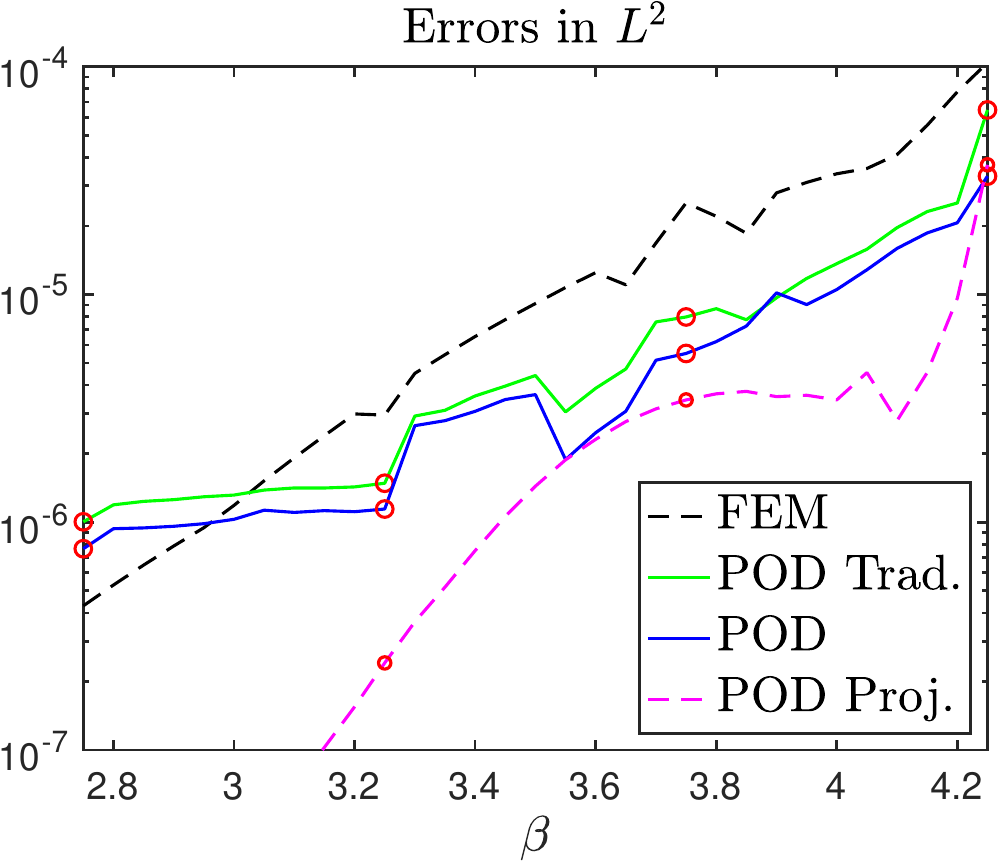}
\caption{\label{fig_range} Left: Errors $\max_{t}\| \nabla  \be_r(t)\|_0$ (blue continuous), $\max_{t}\| \nabla  {\boldsymbol\varepsilon}_r(t)\|_0$ (magenta discontinuous), $\max_{t}\|\nabla( \bu_h(t)-\bu_{h/2} (t))\|_0$ (black discontinuous) and $\max_{t}\| \nabla  \be_r(t)\|_0$ (green continuous) for the traditional POD method; circles mark the values of $\beta$ used for the dataset. Right:  Errors $\max_{t}\|   \be_r(t)\|_0$ (blue continuous), $\max_{t}\|{\boldsymbol\varepsilon}_r(t)\|_0$ (magenta discontinuous), $\max_{t}\| \bu_h(t)-\bu_{h/2} (t)\|_0$ (black discontinuous) and $\max_{t}\|  \be_r(t)\|_0$ (green continuous) for the traditional POD method.}
\end{center}
\end{figure}
With respect to the accuracy of the approximation in the whole period, the results can be seen in~Fig.~\ref{fig_range} where, for each of the 31 values of beta, we show the maximum of the error on a very fine partition (2048 elements) of the corresponding period. We see that the  error $\be_r$ remains well below the 
FEM error $\bu_h-\bu_{h/2}$ for most of the values of~$\beta$, the exception being the largest values of~$\beta$ in the case of the $H^1$ norm and the smallest ones in the case of~$L^2$, which suggests that~$r$ should be chosen for each~$\beta$ according to the (known or estimated) accuracy of the FE space where computations are carried out. We also see that, at least for $H^1$-errors, that $\be_r$ is very close to the projection error~${\boldsymbol \varepsilon}_r$ (magenta line). Finally, we show the error~$\be_r$ of the traditional POD method (green line), which in this example is slightly larger than that of POD method introduced in the present paper. We must say that this is not always the case, since for smaller values of~$r$ (i.e., lower accuracy) the traditional POD method produced slightly smaller errors than the new one.

We now comment on the two-parameter case. For this purpose, we set $\rho=-\log_{10}(\nu)$ as a second parameter and take values
$\rho=1$, $1.5$, $2$ and~$2.5$ and $\beta$ as above (notice that the data corresponding to~$\rho=2$ have been used in the previous experiment). For the FEM approximation we consider quadratic elements on a uniform grid with $J=80$ elements and compute the stable periodic solutions as described above. We also compute the periodic solutions on a grid with $J=160$ elements so that we can compare the two approximations in order to have an idea of the accuracy of the FEM approximation with $J=80$ elements. We do this on a very fine partition (2048 subdivisions) of the corresponding period $[0,T^{\nu_l,\beta_k}]$ for each pair~$(\nu_l,\beta_k)$, where $\nu_l = 10^{-\rho_l}$. The results can be seen in~Fig.~\ref{fig_dos_par1}, where we can see that the accuracy of the approximations varies considerably from the value pairs $(\nu_l,\beta_k)$  near~$(0.1,2.75)$ to those close to~$(10^{-2.5}, 4.25)$.
\begin{figure}[h]
\begin{center}
\includegraphics[height=5truecm]{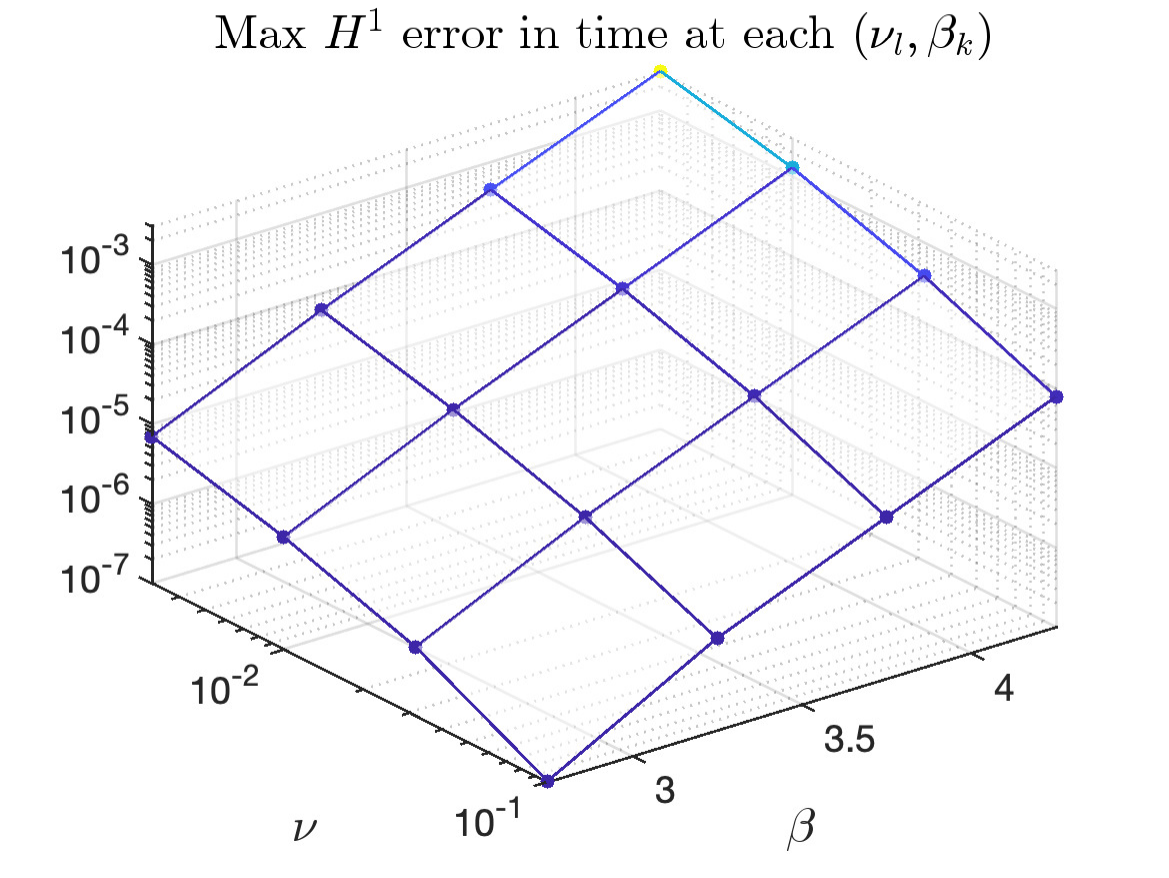}\quad
\includegraphics[height=5truecm]{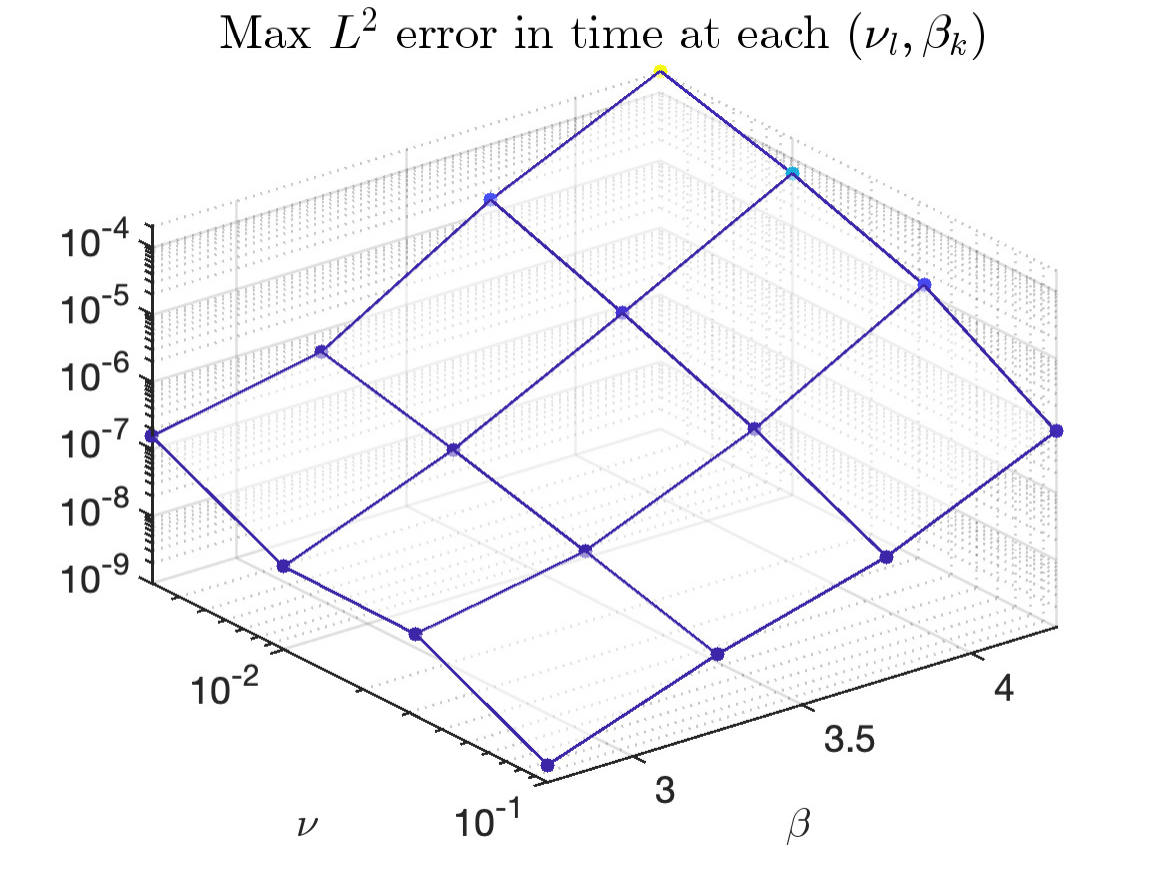}
\caption{\label{fig_dos_par1} Two parameters: accuracy of the FEM approximations used in the data set}
\end{center}
\end{figure}

Next, we find out what values of $M$ (number of snapshots in each period) and~$r$ (dimension of the POD space) should be used so that the POD method matches the accuracy of the FEM approximation. We found that with $M=64$ one has to take $r=80$ in order to have $\max_{t}\| \nabla \be_r(t)\|_0$ below $4\times 10^{-3}$, whereas this was achieved for~$r=65$ if $M=128$ snapshots per period were used. Thus, in the sequel, we use~$M=128$ and~$r=65$. The $H^1$-errors for the value pairs $(\nu_l,\beta_k)$ used in the data set can be seen
in~Fig.~\ref{fig_dos_par2} (left plot), where, as before, we see that they vary in size in more than three orders of magnitude. In discontinuous line, the errors~$\max_{t}\| \nabla {\boldsymbol\varepsilon}_r(t)\|_0$ are also shown. Since it is difficult to see how small they are, we show the ratios~$\max_{t}\| \nabla \be_r(t)\|_0/\max_{t}\| \nabla {\boldsymbol\varepsilon}_r(t)\|_0$  on the right plot, where we can see that all values are above $0.05$. Also, we found that 
ten out of sixteen values are above 0.03.
\begin{figure}[h]
\begin{center}
\includegraphics[height=5truecm]{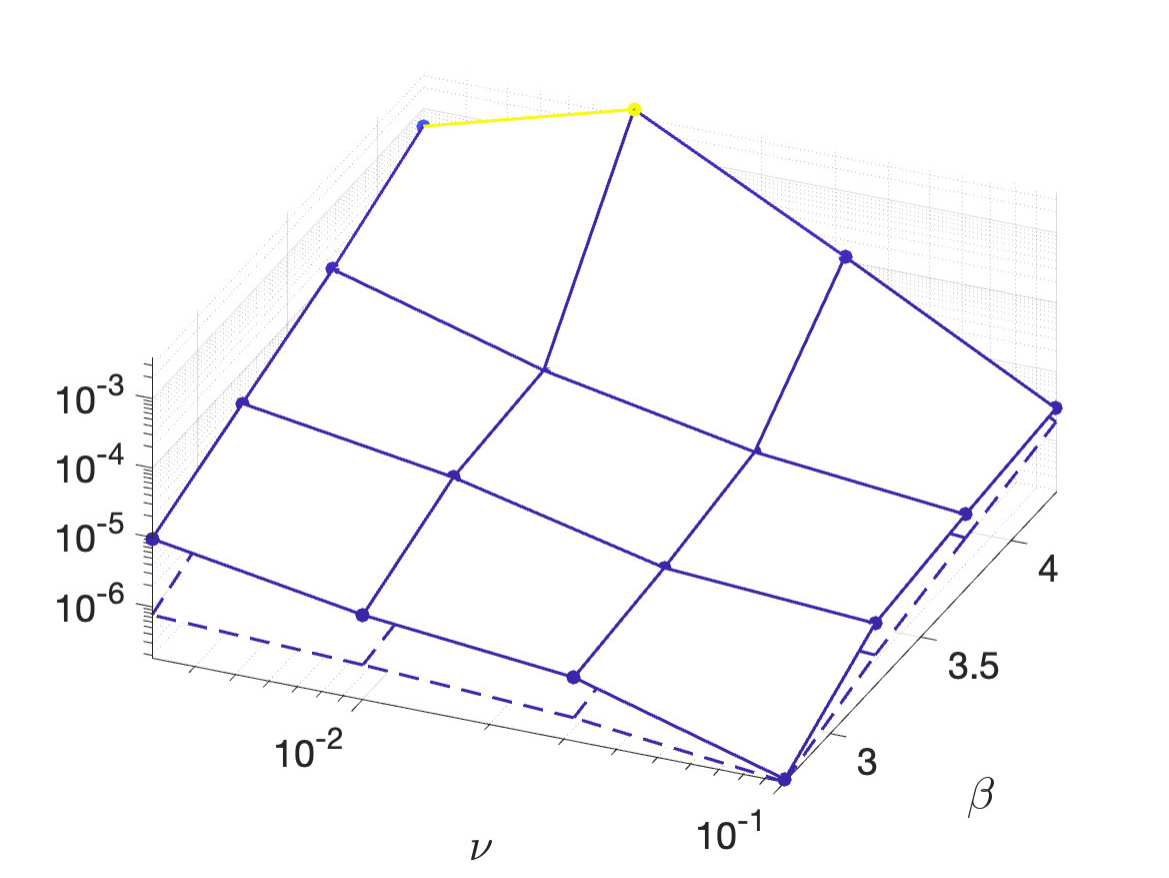}\quad
\includegraphics[height=5truecm]{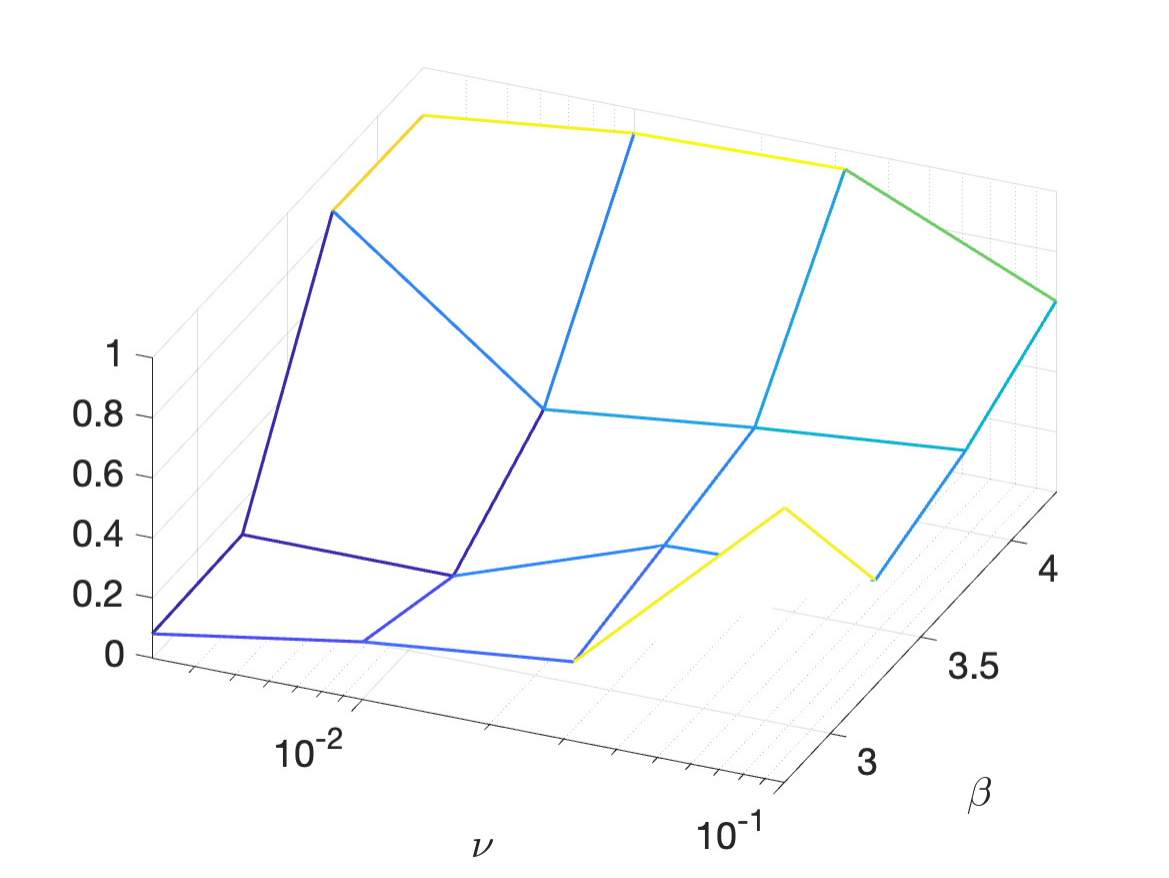}
\caption{\label{fig_dos_par2} Two parameters: Left, $\max_{t\in [0,T^{\nu_l,\beta_k}]}\|\nabla \be_r(t)\|_0$ (continuous line) and  $\max_{t\in [0,T^{\nu_l,\beta_k}]}\| \nabla {\boldsymbol\varepsilon}_r(t)\|_0$ (discontinuous line); Right, ratios $\max_{t\in [0,T^{\nu_l,\beta_k}]}\| \nabla \be_r(t)\|_0/\max_{t\in [0,T^{\nu_l,\beta_k}]}\| \nabla {\boldsymbol\varepsilon}_r(t)\|_0$.}
\end{center} 
\end{figure}

Next, to simulate the POD method outside the data set, we compute the periodic solutions of the POD approximation for value pairs~$(\rho,\beta)$ on a $30\times 30$ uniform partition of the interval~$[1,2.5]\times[2.75,4.25]$. Again, these were computed without any further computation with the FEM method. We also computed the FEM approximations for the same value pairs. The relative error between the periods of both methods was found to be below $2\times 10^{-7}$. Also, for each of these value pairs, we measure the errors $\be_r(t)$ over a fine partition of the interval~$[0,T^{\nu,\beta}]$, $T^{\nu,\beta}$ being the corresponding period. The results can be seen in~Fig.~\ref{fig_dos_par3}, were we show $\max_{t\in [0,T^{\nu,\beta}]}\|\nabla \be_r(t)\|_0$ (left plot) and
$\max_{t\in [0,T^{\nu,\beta}]}\| \be_r(t)\|_0$ (right plot); the red circles correspond to value pairs used in the data set. We see that the errors outside the data set do not differ much from those in the data set. Again, the fact that the errors in different parts of the parameter space differ in several orders of magnitude suggests to use values of~$r$ that depend on the value pairs $(\nu,\beta)$. This will also be subject of future work.
\begin{figure}[h]
\begin{center}
\includegraphics[height=5truecm]{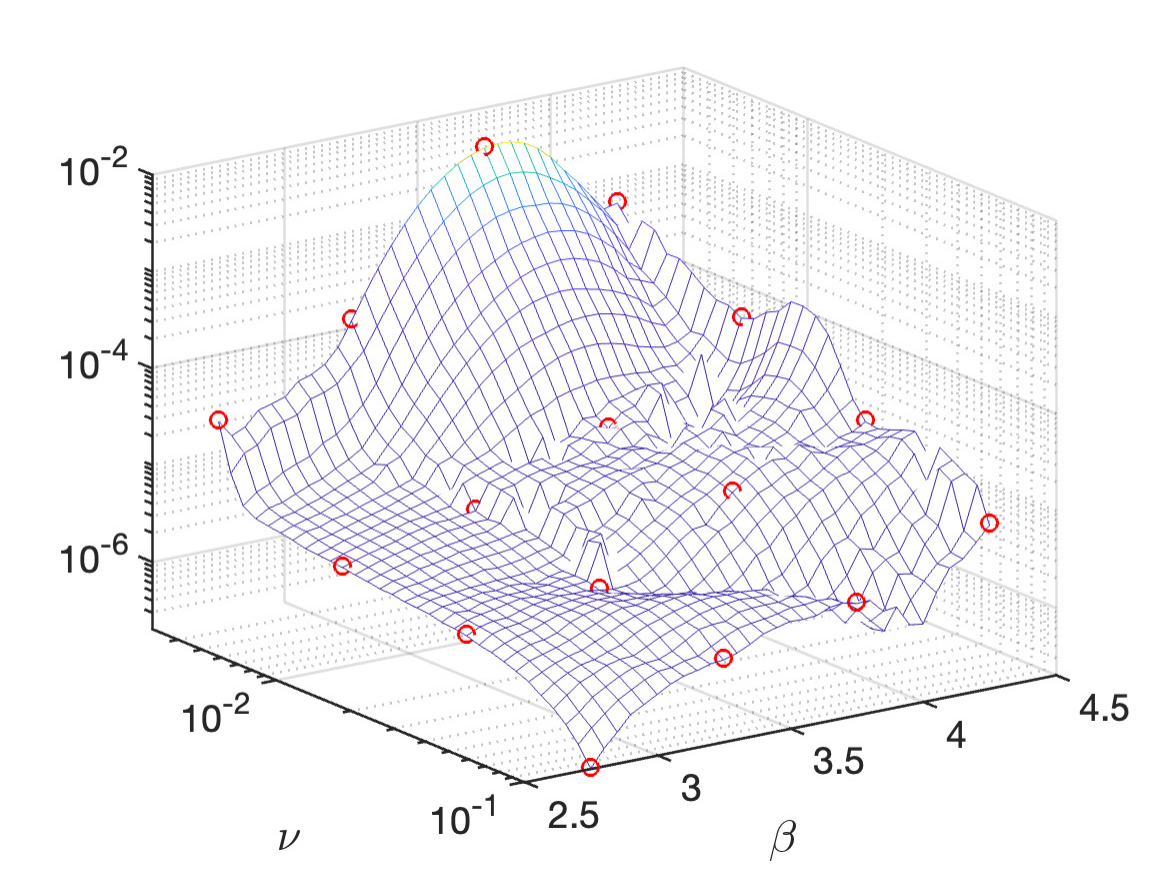}\quad
\includegraphics[height=5truecm]{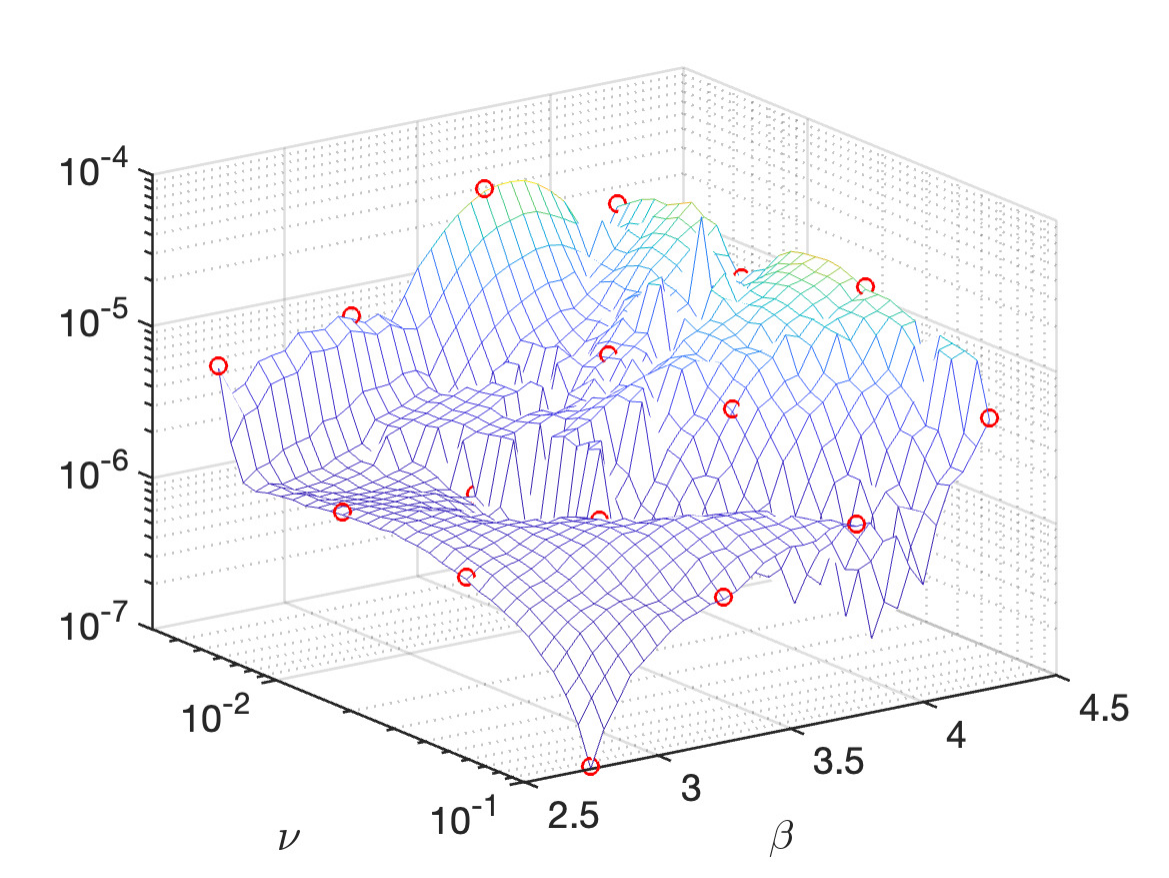}
\caption{\label{fig_dos_par3} Two parameters: Errors $\be_r$ of the POD method in~Section~\ref{sec6} outside the data set; left, $\max_{t\in [0,T^{\nu,\beta}]}\|\nabla \be_r(t)\|_0$, right,
$\max_{t\in [0,T^{\nu,\beta}]}\|\be_r(t)\|_0$. Red circles correspond to value pairs used in the dataset.}
\end{center}
\end{figure}

\section{Conclusions and future work}
In this paper we propose a new POD method for parametric time-dependent reaction-diffusion partial differential equations. The method is based on finite differences (respect to time and parameters) of some snapshots. The snapshots are finite element approximations to the solutions of the PDE at different times and values of parameters in a selected set. The method is designed in such a way that pointwise-in-time estimates can be proved at any time in a given interval. The a priori bounds are also valid for any value of the parameters (including out-of-sample values). The error in the POD approximations depend on the tail of the eigenvalues defined in \eqref{the_sigma_r} and on the distance between two consecutive values of time where the snapshots are taken and the distance between two consecutive values of the parameters. Interpolation techniques are used to achieve the error bounds for out-of-sample values. In the paper we also consider a standard POD method based on the set of snapshots for different values of time and
parameters and prove analogous error bounds using the techniques developed for the new method. For the standard case the bounds are quasi-optimal, as the exponent in the tail of the eigenvalues depends on the smoothness of the finite element approximations. The smoother (with respect to time and parameters) the approximations are, the closer the exponent gets to the optimal value 1.

We present some numerical experiments to show the performance of the new method in practice. We take a reaction-diffusion system depending on some parameters and consider the case of only one parameter apart from time and a two-parameter case. For the first case, we compare the results of the new method with the standard one. In the numerical experiments shown in the present paper the new method improves the performance of the standard one, although for smaller values of $r$ (i.e., lower accuracy) we found that the  traditional method produced slightly smaller errors. In both cases, in agreement with the theory, accurate approximations are obtained for any value of time and any value of the parameters in some given intervals. We also show that the POD method (both new and standard) is able to compute periodic orbits of the reaction-diffusion system (both period and initial condition) for out-of-sample values of the parameters without any additional information. The  accuracy of the POD method proved to be excellent in the computation of the periods with a relative error respect to the periods of the FEM approximation around $10^{-9}$.

Our experiments are restricted to equally spaced-times and parameters. As a subject of future research we will study the case in which we concentrate snapshots in those parts with larger variations. In our experiments the errors in different parts of the parameter space differ in several orders of magnitude which suggests that different values of $r$ could be considered for different values of the parameters. This will be also subject of future research since we have studied the variation of the errors with $r$  but keeping always the same value of $r$ for the given set of parameters.  
\bibliographystyle{abbrv} 

\end{document}